\providecommand{\tabularnewline}{\\}
\numberwithin{equation}{section}
\numberwithin{figure}{section}
\def\Z{{\mathbb{Z}}}
\theoremstyle{definition}
\newtheorem{lemma}{Lemma}[section]
\newtheorem{theorem}[lemma]{Theorem}
\newtheorem{proposition}[lemma]{Proposition}
\newtheorem{definition}[lemma]{Definition}
\newtheorem{remark}[lemma]{Remark}
\def \<{\langle}
\def \>{\rangle}
\def\wt{\rm wt}
\title{The $3$-permutation orbifold of a lattice vertex operator algebra
}
\author{Chongying Dong\footnote{Supported by NSF grant DMS-1404741 and China NSF grant 11371261}}
\affil{School of Mathematics and Statistics, Qingdao University, Qingdao 266071 CHINA \& Department of Mathematics, University of
California, Santa Cruz, CA 95064 USA}
\author{Feng Xu\footnote{Partially supported by China NSF grant 11471064}}
\affil{Department of Mathematics, Foshan University, Foshan 528000 CHINA \&  Department of Mathematics, University of California, Riverside, CA 92521 USA}
\author{Nina Yu\footnote{Supported by China NSF grant 11601452}}
\affil{School of Mathematical Sciences, Xiamen University, Fujian 361005 CHINA}
\begin{document}
\maketitle
\begin{abstract}
Irreducible modules of the 3-permutation orbifold of a rank
one lattice vertex operator algebra are listed explicitly. Fusion rules are
determined by using the quantum dimensions. The $S$-matrix is also given.
\end{abstract}

\section{{\normalsize{}Introduction}}

This paper is a continuation of our study on permutation orbifolds of
lattice vertex operator algebras \cite{DXY1,DXY2}. Let $V$ be a
vertex operator algebra, then the tensor product $V^{\otimes n}$
of $n$-copies of $V$ naturally has
a vertex operator algebra structure \cite{FHL}. Any element $\sigma$
of the symmetric group $S_{n}$ acts on $V^{\otimes n}$ which gives
an automorphism of $V^{\otimes n}$ of finite order. The fixed points
set $\left(V^{\otimes n}\right)^{\<\sigma\>}=\left\{ v\in V^{\otimes n}|\sigma\left(v\right)=v\right\} $
is a vertex operator subalgebra which is called a $\sigma$-permutation
orbifold model.

The permutation orbifold theory has been studied by physicists two
decades ago. Its systematic study has been started with the paper
\cite{BHS} for cyclic permutation for affine algebras and the Virasoro
algebras, and was generalized to the general symmetric group in \cite{Ba}.
Conformal nets approach to permutation orbifold have been given
in \cite{KLX} where irreducible representations of the cyclic orbifold
are determined and fusion rules were given for $n=2$. Twisted sectors
of permutation orbifolds of tensor products of an arbitrary vertex
operator algebra have been constructed in \cite{BDM}. The $C_{2}$-cofiniteness
of permutation orbifolds and general cyclic orbifolds have been established
in \cite{A1,A2,M1,M2}. An equivalence of two constructions \cite{FLM,Le, BDM} of twisted modules for permutation orbifolds of
lattice vertex operator algebras was given in \cite{BHL}.
 The permutation orbifolds with $n=2$ has been studied extensively. In particular, the quantum dimensions of the irreducible modules and fusion
rules for the permutation orbifolds of lattice vertex
operator algebras are determined \cite{DXY1,DXY2}. Also see \cite{BE} for the classification of irreducible modules for the fixed point vertex operator subalgebra of a lattice vertex operator algebra under any order isometry of the lattice.

The main ingredient in orbifold theory is the twisted modules. Since the twisted modules for permutation orbifolds have been constructed in \cite{BDM}, a complete understanding of general permutation orbifolds is visible.
According to recent results in \cite{M2} and \cite{CM}, the cyclic permutation orbifold $(V^{\otimes n})^{\<\sigma\>}$ is rational and $C_2$-cofinite if $V$ is.   So every irreducible $(V^{\otimes n})^{\<\sigma\>}$-module occurs in an irreducible twisted module \cite{DRX}. The question now for the permutation orbifolds is how to write down the irreducible modules explicitly, and compute the quantum dimensions and the fusion rules. But it is still impossible to compute the fusion rules for an arbitrary rational vertex operator algebra $V$ at this stage.
In this paper we achieve this for $3$-permutation orbifold of a rank one lattice vertex operator algebra. We also find the $S$-matrix. In this case, $(V^{\otimes 3})^{\<\sigma\>}$ is a simple current extension of a rational vertex operator algebra. It follows from \cite{Y} directly that $(V^{\otimes 3})^{\<\sigma\>}$ is rational. Our results depend heavily on those  given  in \cite{TY1,TY2} on orbifold vertex operator algebra $V_{\sqrt{2}A_2}^{\tau}$ where  $\tau$ is an order $3$ isometry of $\sqrt{2}A_2.$ This is why we only consider this particular $3$-orbifold. Our result also gives an answer to a question posed at the end of \cite{KLX}. The question is to determine the chiral data such as $S,T$ matrices for $n$-cyclic orbifolds when $n>2$. As for $n=2$ case computed in the last section of \cite{KLX}, it is expected that a general result for $n>2$ case will lead to interesting arithmetic properties of chiral data.  Even though this question  at the end of \cite{KLX} is stated in the setting of conformal net, such question is also interesting in the framework of vertex operator algebra.

The paper is organized as follows: In Section 2 we introduce basic
notions in the theory of vertex operator algebras. In Section 3, we
first recall construction of lattice vertex operator algebras and
results about the fixed point subalgebra $V_{\sqrt{2}A_{2}}^{\tau}$
of lattice vertex operator algebra $V_{\sqrt{2}A_{2}}$ where $\tau$
is an order 3 automorphism of $V_{\sqrt{2}A_{2}}$. We list the
irreducible modules for $\left(V_{\mathbb{Z}\alpha}\otimes V_{\mathbb{Z}\alpha}\otimes V_{\mathbb{Z}\alpha}\right)^{\mathbb{Z}_{3}}$ explicitly.
 In Section 4 we determine the  quantum dimensions of irreducible
$\left(V_{\mathbb{Z}\alpha}\otimes V_{\mathbb{Z}\alpha}\otimes V_{\mathbb{Z}\alpha}\right)^{\mathbb{Z}_{3}}$-modules
and the fusion products. In the last section, we give the
$S$-matrix of $\left(V_{\mathbb{Z}\alpha}\otimes V_{\mathbb{Z}\alpha}\otimes V_{\mathbb{Z}\alpha}\right)^{\mathbb{Z}_{3}}$.

\section{{\normalsize{}Preliminaries}}

Let $\left(V,Y,\mathbf{1},\omega\right)$ be a vertex operator algebra.
First we review basics from \cite{FLM, FHL, DLM2, LL}.  Let $g$ be an automorphism of a vertex operator algebra $V$ of order
$T$. Denote the decomposition of $V$ into eigenspaces of $g$ as:

\[
V=\oplus_{r\in\mathbb{Z}/T\text{\ensuremath{\mathbb{Z}}}}V^{r}
\]
where $V^{r}=\left\{ v\in V|gv=e^{2\pi ir/T}v\right\} $.

\begin{definition}A \emph{weak $g$-twisted $V$-module} $M$ is
a vector space with a linear map
\[
Y_{M}:V\to\left(\text{End}M\right)\{z\}
\]

\[
v\mapsto Y_{M}\left(v,z\right)=\sum_{n\in\mathbb{Q}}v_{n}z^{-n-1}\ \left(v_{n}\in\mbox{End}M\right)
\]

which satisfies the following: for all $0\le r\le T-1$, $u\in V^{r}$,
$v\in V$, $w\in M$,

\[
Y_{M}\left(u,z\right)=\sum_{n\in-\frac{r}{T}+\mathbb{Z}}u_{n}z^{-n-1},
\]

\[
u_{l}w=0\ for\ l\gg0,
\]

\[
Y_{M}\left(\mathbf{1},z\right)=Id_{M},
\]

\[
z_{0}^{-1}\text{\ensuremath{\delta}}\left(\frac{z_{1}-z_{2}}{z_{0}}\right)Y_{M}\left(u,z_{1}\right)Y_{M}\left(v,z_{2}\right)-z_{0}^{-1}\delta\left(\frac{z_{2}-z_{1}}{-z_{0}}\right)Y_{M}\left(v,z_{2}\right)Y_{M}\left(u,z_{1}\right)
\]

\[
=z_{2}^{-1}\left(\frac{z_{1}-z_{0}}{z_{2}}\right)^{-r/T}\delta\left(\frac{z_{1}-z_{0}}{z_{2}}\right)Y_{M}\left(Y\left(u,z_{0}\right)v,z_{2}\right),
\]
where $\delta\left(z\right)=\sum_{n\in\mathbb{Z}}z^{n}$. \end{definition}

\begin{definition}An \emph{admissible $g$-twisted $V$-module} $M=\oplus_{n\in\frac{1}{T}\mathbb{Z}_{+}}M\left(n\right)$
is a $\frac{1}{T}\mathbb{Z}_{+}$-graded weak $g$-twisted module
such that $u_{m}M\left(n\right)\subset M\left(\mbox{wt}u-m-1+n\right)$
for homogeneous $u\in V$ and $m,n\in\frac{1}{T}\mathbb{Z}.$ $ $

\end{definition}

\begin{definition}

A $g$-\emph{twisted $V$-module} is a weak $g$-twisted $V$-module
$M$ which carries a $\mathbb{C}$-grading induced by the spectrum
of $L(0)$ where $L(0)$ is the component operator of $Y(\omega,z)=\sum_{n\in\mathbb{Z}}L(n)z^{-n-2}.$
That is, we have $M=\bigoplus_{\lambda\in\mathbb{C}}M_{\lambda},$
where $M_{\lambda}=\{w\in M|L(0)w=\lambda w\}$. Moreover, $\dim M_{\lambda}$
is finite and for fixed $\lambda,$ $M_{\frac{n}{T}+\lambda}=0$ for
all small enough integers $n.$

\end{definition}

If $g=Id_{V}$ we have the notions of weak, ordinary and admissible
$V$-modules \cite{DLM2}.

\begin{remark} If $M$ is a simple  $g$-twisted $V$-module, then there exists $\lambda\in\mathbb{C}$ such that  $M=\oplus_{n=0}^{\infty}M_{\lambda+\frac{n}{T}}$ with $M_\lambda\not=0$ and $M_{\lambda+\frac{n}{T}}=\{w\in M|L(0)w=(\lambda+\frac{n}{T})w\}$ for any $n\in \mathbb{Z}_+$. $\lambda$ is called the \emph{ conformal weight} of $M$.
\end{remark}

\begin{definition}A vertex operator algebra $V$ is called \emph{$g$-rational}
if the admissible $g$-twisted module category is semisimple. $V$
is called \emph{rational} if $V$ is $1$-rational. \end{definition}

It is proved in \cite{DLM2} that if $V$ is a $g$-rational vertex
operator algebra, then there are only finitely many irreducible admissible
$g$-twisted $V$-modules up to isomorphism and any irreducible admissible
$g$-twisted $V$-module is ordinary.

\begin{definition} A vertex operator algebra $V$ is said to be $C_{2}$-cofinite
if $V/C_{2}(V)$ is finite dimensional, where $C_{2}(V)=\langle v_{-2}u|v,u\in V\rangle$ \cite{Z}.
\end{definition}

Now we consider the tensor product vertex operator algebras and the tensor
product modules for tensor product vertex operator algebras. The tensor
product of vertex operator algebras $\left(V^{1},Y^{1},1,\omega^{1}\right)$,
$\left(V^{2},Y^{2},1,\omega^{2}\right)$ and $\left(V^{3},Y^{3},1,\omega^{3}\right)$
is constructed on the tensor product vector space $V=V^{1}\otimes V^{2}\otimes V^{3}$
where the vertex operator $Y\left(\cdot,z\right)$ is defined by $Y\left(v^{1}\otimes v^{2}\otimes v^{3},z\right)=Y\left(v^{1},z\right)\otimes Y\left(v^{2},z\right)\otimes Y\left(v^{3},z\right)$
for $v^{i}\in V^{i}$, $i=1,2,3$, the vacuum vector is $\mathbf{1}=1\otimes1\otimes1$
and the Virasoro element is $\omega=\omega^{1}\otimes\omega^{2}\otimes\omega^{3}$.
Then $\left(V,Y,\mathbf{1},\omega\right)$ is a vertex operator algebra
\cite{FHL,LL}. Let $W^{i}$ be an admissible $V^{i}$-module for
$i=1,2,3$. We may construct the tensor product admissible module
$W^{1}\otimes W^{2}\otimes W^{3}$ for the tensor product vertex operator
algebra $V^{1}\otimes V^{2}\otimes V^{3}$ by $Y\left(v^{1}\otimes v^{2}\otimes v^{3},z\right)=Y\left(v^{1},z\right)\otimes Y\left(v^{2},z\right)\otimes Y\left(v^{3},z\right)$.
Then $W^{1}\otimes W^{2}\otimes W^{3}$ is an admissible $V^{1}\otimes V^{2}\otimes V^{3}$-module.
We have the following result about tensor product modules \cite{DMZ,FHL}:

\begin{theorem} Let $V^{1},V^{2},V^{3}$ be rational vertex operator
algebras, then $V^{1}\otimes V^{2}\otimes V^{3}$ is rational and
any irreducible $V^{1}\otimes V^{2}\otimes V^{3}$-module is a tensor
product $W^{1}\otimes W^{2}\otimes W^{3}$ for some irreducible $V^{i}$-module
$W^{i}$ and $i=1,2,3.$

\end{theorem}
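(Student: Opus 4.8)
The plan is to reduce the statement to the case of two tensor factors, which is the classical result recorded in \cite{DMZ,FHL}, using associativity of the tensor product construction. There is a canonical isomorphism of vertex operator algebras $V^{1}\otimes V^{2}\otimes V^{3}\cong\left(V^{1}\otimes V^{2}\right)\otimes V^{3}$, so it suffices to prove the two-factor version: if $U^{1}$ and $U^{2}$ are rational vertex operator algebras, then $U^{1}\otimes U^{2}$ is rational and every irreducible $U^{1}\otimes U^{2}$-module is isomorphic to $M^{1}\otimes M^{2}$ for some irreducible $U^{i}$-modules $M^{i}$. Granting this, one applies it first to see that $V^{1}\otimes V^{2}$ is rational, then again to $\left(V^{1}\otimes V^{2}\right)\otimes V^{3}$: every irreducible module is $U\otimes W^{3}$ for an irreducible $V^{1}\otimes V^{2}$-module $U$ and an irreducible $V^{3}$-module $W^{3}$, and finally $U\cong W^{1}\otimes W^{2}$, which gives the theorem.

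For the two-factor version I would argue in three steps; write $U=U^{1}\otimes U^{2}$. \emph{Step 1.} If $M^{i}$ is an irreducible $U^{i}$-module, then $M^{1}\otimes M^{2}$ is an irreducible $U$-module. The key input is the Schur-type lemma for vertex operator algebra modules, $\operatorname{End}_{U^{i}}(M^{i})=\mathbb{C}$, valid since $M^{i}$ is irreducible and of countable dimension; together with the Jacobson density theorem this shows that for linearly independent $w^{2}_{1},\dots,w^{2}_{k}\in M^{2}$ the components of the operators $Y_{M^{2}}(v,z)$, $v\in U^{2}$, act with enough freedom that any nonzero $\sum_{i}w^{1}_{i}\otimes w^{2}_{i}$ can be carried, by the action of $\mathbf{1}\otimes U^{2}$, to a nonzero decomposable vector $w^{1}_{1}\otimes w^{2}_{1}$; then the modes of $U^{1}\otimes\mathbf{1}$ sweep out $M^{1}\otimes w^{2}_{1}$ and those of $\mathbf{1}\otimes U^{2}$ recover all of $M^{1}\otimes M^{2}$. \emph{Step 2.} Every irreducible admissible $U$-module $M=\bigoplus_{n\ge 0}M(n)$ is of this form. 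Indeed $M$ is generated by $M(0)$, which is an irreducible module for the Zhu algebra $A(U)\cong A(U^{1})\otimes A(U^{2})$; since $U^{i}$ is rational, $A(U^{i})$ is finite-dimensional and semisimple, so $M(0)\cong N^{1}\otimes N^{2}$ for irreducible $A(U^{i})$-modules $N^{i}$, each of which is the top level of a unique irreducible admissible $U^{i}$-module $M^{i}$. By Step 1, $M^{1}\otimes M^{2}$ is irreducible with top level $N^{1}\otimes N^{2}=M(0)$, and since an irreducible admissible module is determined by its top level as an $A(U)$-module, $M\cong M^{1}\otimes M^{2}$. \emph{Step 3.} $U$ is rational. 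Given an arbitrary admissible $U$-module $M$, regard it as an admissible $U^{1}$-module; rationality of $U^{1}$ gives a decomposition $M\cong\bigoplus_{[M^{1}]}M^{1}\otimes\operatorname{Hom}_{U^{1}}(M^{1},M)$ into isotypic components, the commuting action of $\mathbf{1}\otimes U^{2}$ makes each multiplicity space an admissible $U^{2}$-module, and rationality of $U^{2}$ decomposes the multiplicity spaces into irreducibles, exhibiting $M$ as a direct sum of modules $M^{1}\otimes M^{2}$.

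I expect Step 3 to be the main obstacle: one must arrange the $U^{1}$-isotypic decomposition of $M$ to be compatible with the grading and check that $\operatorname{Hom}_{U^{1}}(M^{1},M)$ genuinely carries the structure of an admissible $U^{2}$-module with grading bounded from below, since a priori the relevant conformal weights could be spread over infinitely many values and one must ensure only finitely many contribute to each graded piece. Step 1, though standard, also goes beyond finite-dimensional linear algebra in relying on the Schur-type lemma for vertex operator algebra modules; Step 2 is then essentially formal given Zhu's correspondence between irreducible admissible $V$-modules and irreducible $A(V)$-modules.
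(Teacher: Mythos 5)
The paper does not prove this statement at all: it is quoted verbatim from the references [DMZ, FHL], so there is no internal proof to compare against. Your outline is, in substance, a correct reconstruction of the standard argument from exactly those sources: the irreducibility of $M^{1}\otimes M^{2}$ via Schur's lemma plus Jacobson density is FHL's Theorem 4.7.4 (and requires, as you note, that the irreducible modules have countable dimension, which holds because irreducible admissible modules over rational vertex operator algebras are ordinary with finite-dimensional graded pieces), while the classification of irreducibles through $A(U^{1}\otimes U^{2})$ and the rationality via isotypic decomposition is the DMZ argument. Two small remarks. First, in Step 2 you do not need the full isomorphism $A(U^{1}\otimes U^{2})\cong A(U^{1})\otimes A(U^{2})$; the easy surjection $A(U^{1})\otimes A(U^{2})\twoheadrightarrow A(U^{1}\otimes U^{2})$ (coming from $O(U^{1})\otimes U^{2}+U^{1}\otimes O(U^{2})\subseteq O(U^{1}\otimes U^{2})$) already forces every irreducible $A(U^{1}\otimes U^{2})$-module to be of the form $N^{1}\otimes N^{2}$, since the $A(U^{i})$ are finite-dimensional semisimple by rationality. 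Second, the grading worry you raise in Step 3 resolves cleanly: an admissible $U$-module restricted to $U^{1}\otimes\mathbf{1}$ is admissible for $U^{1}$ with the same $\mathbb{Z}_{+}$-grading, and taking the multiplicity spaces to consist of grade-preserving-up-to-shift homomorphisms makes $\operatorname{Hom}_{U^{1}}(M^{1},M)$ a $\mathbb{Z}_{+}$-graded, hence admissible, $U^{2}$-module. With these points made explicit the proof is complete and is the one the paper implicitly relies on.
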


Now we recall notion of contragredient module \cite{FHL}. \begin{definition}
Let $M=\bigoplus_{n\in\frac{1}{T}\mathbb{Z}_{+}}M(n)$ be an admissible
$g$-twisted $V$-module, the contragredient module $M'$ is defined
as follows:
\[
M'=\bigoplus_{n\in\frac{1}{T}\mathbb{Z}_{+}}M(n)^{*},
\]
where $M(n)^{*}=\mbox{Hom}_{\mathbb{C}}(M(n),\mathbb{C}).$ The vertex
operator $Y_{M'}(v,z)$ is defined for $v\in V$ via
\begin{eqnarray*}
\langle Y_{M'}(v,z)f,u\rangle= & \langle f,Y_{M}(e^{zL(1)}(-z^{-2})^{L(0)}v,z^{-1})u\rangle
\end{eqnarray*}
where $\langle f,w\rangle=f(w)$ is the natural paring $M'\times M\to\mathbb{C}.$
Then $M'$ is an admissible $g^{-1}$-twisted $V$-module \cite{X}.
A $V$-module $M$ is said to be \emph{ self dual} if $M$ and $M'$
are isomorphic $V$-modules. \end{definition}

Here are the definition of intertwining operators and fusion rules
\cite{FHL}.

\begin{definition} Let $(V,\ Y)$ be a vertex operator algebra and
let $(W^{1},\ Y^{1}),\ (W^{2},\ Y^{2})$ and $(W^{3},\ Y^{3})$ be
$V$-modules. An intertwining operator of type $\left(\begin{array}{c}
W^{1}\\
W^{2\ }W^{3}
\end{array}\right)$ is a linear map
\[
I(\cdot,\ z):\ W^{2}\to\text{\ensuremath{\mbox{Hom}(W^{3},\ W^{1})\{z\}}}
\]

\[
u\to I(u,\ z)=\sum_{n\in\mathbb{Q}}u_{n}z^{-n-1}
\]
satisfying:

(1) for any $u\in W^{2}$ and $v\in W^{3}$, $u_{n}v=0$ for $n$
sufficiently large;

(2) $I(L_{-1}v,\ z)=(\frac{d}{dz})I(v,\ z)$;

(3) (Jacobi Identity) for any $u\in V,\ v\in W^{2}$

\[
z_{0}^{-1}\delta\left(\frac{z_{1}-z_{2}}{z_{0}}\right)Y^{1}(u,\ z_{1})I(v,\ z_{2})-z_{0}^{-1}\delta\left(\frac{-z_{2}+z_{1}}{z_{0}}\right)I(v,\ z_{2})Y^{3}(u,\ z_{1})
\]
\[
=z_{2}^{-1}\left(\frac{z_{1}-z_{0}}{z_{2}}\right)I(Y^{2}(u,\ z_{0})v,\ z_{2}).
\]

We denote the space of all intertwining operators of type $\left(\begin{array}{c}
W^{1}\\
W^{2}\ W^{3}
\end{array}\right)$ by $I_{V}\left(\begin{array}{c}
W^{1}\\
W^{2}\ W^{3}
\end{array}\right).$ Let $N_{W^{2},\ W^{3}}^{W^{1}}=\dim I_{V}\left(\begin{array}{c}
W^{1}\\
W^{2}\ W^{3}
\end{array}\right)$. These integers $N_{W^{2},\ W^{3}}^{W^{1}}$ are usually called the
\emph{fusion rules}. \end{definition}

\begin{definition} Let $V$ be a vertex operator algebra, and $W^{1},$
$W^{2}$ be two $V$-modules. A pair $(W,I)$ of a $V$-module $W$
and $I\in I_{V}\left(\begin{array}{c}
\ \ W\ \\
W^{1}\ \ W^{2}
\end{array}\right),$ is called a \emph{fusion product} of $W^{1}$ and $W^{2}$ if for
any $V$-module $M$ and $\mathcal{Y}\in I_{V}\left(\begin{array}{c}
\ \ M\ \\
W^{1}\ \ W^{2}
\end{array}\right),$ there is a unique $V$-module homomorphism $f:W\rightarrow M,$ such
that $\mathcal{Y}=f\circ I.$ As usual, we denote $W$ by $W^{1}\boxtimes_{V}W^{2}.$
\end{definition}

It is well known that if $V$ is rational, then the fusion product
exists. We shall often consider the fusion product
\[
W^{1}\boxtimes_{V}W^{2}=\sum_{W}N_{W^{1},\ W^{2}}^{W}W
\]
where $W$ runs over the set of equivalence classes of irreducible
$V$-modules.

The following symmetry properties of fusion rules are well known \cite{FHL}.

\begin{proposition}\label{fusion rule symmmetry property} Let $W^{i}$
$\left(i=1,2,3\right)$ be $V$-modules. Then

\[
\dim I_{V}\left(_{W^{1}W^{2}}^{\ \ W^{3}}\right)=\dim I_{V}\left(_{W^{2}W^{1}}^{\ \ W^{3}}\right),\dim I_{V}\left(_{W^{1}W^{2}}^{\ \ W^{3}}\right)=\dim I_{V}\left(_{W^{1}\left(\ W^{3}\right)'}^{\ \ \left(W^{2}\right)'}\right).
\]

\end{proposition}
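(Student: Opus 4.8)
The plan is to prove both equalities by producing, in each case, an explicit linear isomorphism between the two spaces of intertwining operators; since only dimensions are asserted, such an isomorphism (or merely a pair of mutually inverse linear maps) suffices. This is the approach of Frenkel--Huang--Lepowsky, and both maps are variants of constructions already recalled in this section.

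For the first identity, starting from $I\in I_{V}\left(_{W^{1}W^{2}}^{\ \ W^{3}}\right)$ I would define an operator $\sigma(I)$ by
\[
\sigma(I)(w^{2},z)w^{1}=e^{zL(-1)}I(w^{1},e^{\pi i}z)w^{2},\qquad w^{1}\in W^{1},\ w^{2}\in W^{2},
\]
where $L(-1)$ acts on $W^{3}$; the factor $e^{\pi i}z$ fixes the branch for the (generally rational) powers of $z$, and the application of $e^{zL(-1)}$ makes sense by the lower-truncation and $\frac{1}{T}\mathbb{Z}_{+}$-grading conditions, exactly as for the skew-symmetry formula for vertex operators on $V$. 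Conditions (1) and (2) of the definition of intertwining operator are immediate for $\sigma(I)$; condition (3) is obtained from the Jacobi identity for $I$ by conjugating it with $e^{z_{2}L(-1)}$, using the translation property $e^{zL(-1)}Y^{j}(u,z_{1})e^{-zL(-1)}=Y^{j}(u,z_{1}+z)$ on the relevant modules and re-indexing the formal delta functions. One checks that $\sigma$, up to the usual sign bookkeeping, is an involution, hence bijective, giving the first equality.

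For the second identity I would imitate the definition of the contragredient vertex operator $Y_{M'}$ recalled above: given $I\in I_{V}\left(_{W^{1}W^{2}}^{\ \ W^{3}}\right)$, define $\phi(I)$ of type $\left(_{W^{1}(W^{3})'}^{\ \ (W^{2})'}\right)$ by
\[
\langle\phi(I)(w^{1},z)w',\,w^{2}\rangle=\langle w',\,I(e^{zL(1)}(-z^{-2})^{L(0)}w^{1},\,z^{-1})w^{2}\rangle
\]
for $w^{1}\in W^{1}$, $w^{2}\in W^{2}$ and $w'\in(W^{3})'$, the pairings being those of $(W^{2})'$ with $W^{2}$ and of $(W^{3})'$ with $W^{3}$. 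Conditions (1) and (2) are again routine, and the Jacobi identity for $\phi(I)$ is derived by pairing the Jacobi identity for $I$ against vectors of $W^{2}$ and transporting it through the substitutions $u\mapsto e^{zL(1)}(-z^{-2})^{L(0)}u$ and $z_{i}\mapsto z_{i}^{-1}$, exactly as in the proof that $M'$ is a module. Applying the analogous construction to $(W^{3})'$ and $W^{2}$, together with the canonical identifications $(W^{2})''\cong W^{2}$ and $(W^{3})''\cong W^{3}$, yields a two-sided inverse of $\phi$, so $\phi$ is a linear isomorphism and the second equality follows; composing $\sigma$ and $\phi$ (and the variants of $\phi$ built on the other slot) recovers any remaining symmetry of the fusion rules.

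The step I expect to be the genuine obstacle is the Jacobi-identity verification for $\phi(I)$: one must track carefully how $(-z^{-2})^{L(0)}$ and $e^{zL(1)}$ act on the arguments of the three delta functions and on the powers of the formal variables, and the binomial-expansion conventions must be respected so that the formal-residue manipulations remain legitimate. By contrast, once the translation property of $e^{zL(-1)}$ is available, checking the intertwining axioms for $\sigma(I)$ is comparatively painless.
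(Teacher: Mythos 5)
Your proposal is correct and is precisely the argument of Frenkel--Huang--Lepowsky that the paper invokes: it gives no proof of its own, citing \cite{FHL}, where the first equality is established via the skew-symmetry map $I(\cdot,z)\mapsto e^{zL(-1)}I(\cdot,e^{\pi i}z)$ and the second via the adjoint (contragredient-style) intertwining operator, exactly as you describe. Your identification of the Jacobi-identity check for $\phi(I)$ as the delicate step matches where the work lies in \cite{FHL}.
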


\begin{definition} Let $V$ be a simple vertex operator algebra.
An irreducible $V$-module $M$ is called \emph{a simple current}
if for any irreducible $V$-module $W$, $W\boxtimes M$ exists and
is also a simple $V$-module. \end{definition}

\begin{definition} Let $g$ be an automorphism of the vertex operator
algebra $V$ with order $T$. Let $M=\oplus_{n\in\frac{1}{T}\mathbb{Z}_{+}}M_{\lambda+n}$
be a $g$-twisted $V$-module, the formal character of $M$ is defined
as

\[
\mbox{ch}_{q}M=\mbox{tr}_{M}q^{L\left(0\right)-c/24}=q^{\lambda-c/24}\sum_{n\in\frac{1}{T}\mathbb{Z}_{+}}\left(\dim M_{\lambda+n}\right)q^{n},
\]
where $\lambda$ is the conformal weight of $M$. \end{definition}

If $V$ is $C_{2}$-cofinite, then $\mbox{ch}_{q}M$ converges to
a holomorphic function on the domain $\left|q\right|<1$ \cite{Z,DLM3}.
We denote the holomorphic function by $Z_{M}\left(\tau\right)$.
Here and below, $\tau$ is in the upper half plane $\mathbb{H}$
and $q=e^{2\pi i\tau}$.

\begin{definition} \label{quantum dimension}Let $V$ be a vertex
operator algebra and $M$ a $g$-twisted $V$-module such that $Z_{V}\left(\tau\right)$
and $Z_{M}\left(\tau\right)$ exists. The quantum dimension of $M$
over $V$ is defined as
\[
\mbox{qdim}_{V}M=\lim_{y\to0}\frac{Z_{M}\left(iy\right)}{Z_{V}\left(iy\right)},
\]
where $y$ is real and positive. \end{definition}

 Assume
$V$ is a rational, $C_{2}$-cofinite vertex operator algebra of CFT
type with $V\cong V'$. Let $M^{0}\cong V,\,M^{1},\,\cdots,\,M^{d}$
denote all inequivalent irreducible $V$-modules. Also assume the
conformal weights $\lambda_{i}$ of $M^{i}$ are positive for all
$i>0.$ Then quantum dimensions exist \cite{DJX}. Moreover, we have the following properties of quantum dimensions
\cite{DJX}  which can be derived from the Verlinde formula \cite{H1}, \cite{H2}.

\begin{proposition}\label{possible values of quantum dimensions}
(1) $q\dim_{V}M^{i}\geq1,$ $\forall i=0,\cdots,d.$

(2) For any $i,\,j=0,\cdots,\,d,$
\[
q\dim_{V}\left(M^{i}\boxtimes M^{j}\right)=q\dim_{V}M^{i}\cdot q\dim_{V}M^{j}.
\]

(3) $M^i$ is a simple current if and only if $q\dim_{V}M^i=1$.
\end{proposition}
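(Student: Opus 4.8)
The plan is to reduce all three statements to the Verlinde formula together with the modular invariance of characters. First I would identify the quantum dimension as a ratio of entries of the $S$-matrix. By Zhu's modular invariance theorem (and \cite{DLM3}), rationality and $C_2$-cofiniteness give an $SL_2(\mathbb Z)$-action on the span of the $Z_{M^j}(\tau)$; write $Z_{M^i}(-1/\tau)=\sum_{j=0}^{d}S_{ij}Z_{M^j}(\tau)$. Evaluating at $\tau=iy$ and letting $y\to0^+$, the argument $i/y$ tends to $i\infty$, and each $Z_{M^j}(i/y)$ has leading behaviour $(\dim M^j_{\lambda_j})\,q^{\lambda_j-c/24}$ with $q=e^{-2\pi/y}\to0^+$. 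Since $V$ is of CFT type we have $\lambda_0=0$ with $\dim V_0=1$, while $\lambda_j>0$ for $j>0$, so the $j=0$ summand is the unique dominant term and $Z_{M^i}(iy)=S_{i0}\,q^{-c/24}(1+o(1))$. Hence, by Definition~\ref{quantum dimension},
\[
\mathrm{qdim}_V M^i=\lim_{y\to0^+}\frac{Z_{M^i}(iy)}{Z_V(iy)}=\frac{S_{i0}}{S_{00}}.
\]
Because $Z_{M^i}(iy)>0$ and $Z_V(iy)>0$ for small $y$, this limit is a nonnegative real number; with the known positivity $S_{00}>0$ it follows that $S_{i0}\ge0$ for every $i$, and from $V\cong V'$ one also records $S_{i^*0}=\overline{S_{i0}}=S_{i0}$, so $\mathrm{qdim}_V M^i=\mathrm{qdim}_V (M^i)'$.

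Next I would prove (2) by a direct manipulation of the Verlinde formula $N_{ij}^{k}=\sum_{l}S_{il}S_{jl}\overline{S_{kl}}/S_{0l}$ \cite{H1,H2}. Using $M^i\boxtimes M^j=\sum_k N_{ij}^k M^k$ and the expression above, $\mathrm{qdim}_V(M^i\boxtimes M^j)=\tfrac1{S_{00}}\sum_k N_{ij}^k S_{k0}$; substituting Verlinde and interchanging the sums, the unitarity relation $\sum_k \overline{S_{kl}}S_{k0}=\delta_{l0}$ collapses the inner sum to the single term $l=0$, leaving $\mathrm{qdim}_V(M^i\boxtimes M^j)=S_{i0}S_{j0}/S_{00}^2=\mathrm{qdim}_V M^i\cdot\mathrm{qdim}_V M^j$.

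For (1), I would combine (2) with the standard fact that an irreducible module and its contragredient are mutually dual in the fusion ring, i.e. $N^{V}_{M^i,(M^i)'}=\dim I_V\binom{V}{M^i\,(M^i)'}=1$; this follows from Proposition~\ref{fusion rule symmmetry property} (which reduces it to $\dim I_V\binom{M^i}{M^i\,V}$) and $V\cong V'$. Then, since every $\mathrm{qdim}_V M^k\ge0$,
\[
(\mathrm{qdim}_V M^i)^2=\mathrm{qdim}_V M^i\cdot\mathrm{qdim}_V(M^i)'=\mathrm{qdim}_V\!\big(M^i\boxtimes(M^i)'\big)=\sum_k N^{M^k}_{M^i,(M^i)'}\,\mathrm{qdim}_V M^k\ \ge\ \mathrm{qdim}_V V=1,
\]
and as $\mathrm{qdim}_V M^i\ge0$ this yields $\mathrm{qdim}_V M^i\ge1$. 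Finally (3): if $M^i$ is a simple current, then $M^i\boxtimes(M^i)'$ is simple and contains $V$ (its multiplicity is $N^{V}_{M^i,(M^i)'}=1\neq0$), so it equals $V$; hence $(\mathrm{qdim}_V M^i)^2=\mathrm{qdim}_V V=1$, forcing $\mathrm{qdim}_V M^i=1$ by (1). Conversely, if $\mathrm{qdim}_V M^i=1$ then $M^i\boxtimes(M^i)'$ has quantum dimension $1$, contains $V$, and every irreducible constituent has quantum dimension $\ge1$ by (1); therefore $M^i\boxtimes(M^i)'=V$, i.e. $M^i$ is invertible in the fusion ring. For any irreducible $M^j$, a nontrivial decomposition of $M^i\boxtimes M^j$ would, after fusing with $(M^i)'$, produce a nontrivial decomposition of $M^j$ (each summand has positive quantum dimension, hence is nonzero), contradicting the simplicity of $M^j$; so $M^i\boxtimes M^j$ is simple and $M^i$ is a simple current.

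The main obstacle — and the reason the statement is attributed to external work — lies entirely in the first step: it rests on Zhu's modular invariance theorem, Huang's proof of the Verlinde formula, and the positivity $S_{00}>0$, none of which are established here. Granting these, the remaining arguments are short bookkeeping in the fusion ring, and one only has to be mildly careful that the hypotheses (CFT type, $\lambda_j>0$ for $j>0$, $V\cong V'$) are precisely what makes the $V$-term the unique dominant contribution and the limit defining $\mathrm{qdim}_V M^i$ well defined.
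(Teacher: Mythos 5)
Your argument is correct, but note that the paper offers no proof of this proposition at all: it is quoted from \cite{DJX} with the remark that it can be derived from the Verlinde formula \cite{H1,H2}, and your write-up is essentially a faithful reconstruction of that cited derivation (identifying $q\dim_V M^i=S_{i0}/S_{00}$ via character asymptotics at the cusp, multiplicativity from Verlinde plus unitarity of $S$, the lower bound from $M^i\boxtimes (M^i)'\supseteq V$, and the simple-current criterion from invertibility in the fusion ring). The only blemish is a notational slip in the modular transformation: to expand $Z_{M^i}(iy)$ in terms of the $Z_{M^j}$ near the cusp you should substitute $\tau=i/y$ into $Z_{M^i}(-1/\tau)=\sum_{j}S_{ij}Z_{M^j}(\tau)$ rather than $\tau=iy$; your subsequent asymptotics are consistent with the corrected substitution, and the external inputs you flag (modular invariance, the Verlinde formula, $S_{00}>0$) are exactly the content the paper outsources to \cite{DJX} and \cite{H1,H2}.
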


\section{{\normalsize{}The vertex operator algebra $\left(V_{\Z\alpha}\otimes V_{\Z\alpha}\otimes V_{\Z\alpha}\right)^{\mathbb{Z}_{3}}$}}

We first review some facts about lattice vertex operator algebra $V_{\mathcal{L}}$
associate to a positive even lattice $\mathcal{L}$ from \cite{FLM}.
Then we give some related results about $V_{\mathcal{L}}$ \cite{D, DL}.

Let $\mathcal{L}$ be a positive definite even lattice with bilinear
form $\left\langle \cdot,\cdot\right\rangle $ and $\mathcal{L}^{\circ}$
its dual lattice in $\mathfrak{h}=\mathbb{C}\otimes_{\mathbb{Z}}\mathcal{L}$.
Let $\left\{ \lambda_{0}=0,\lambda_{1},\lambda_{2},\cdots\right\} $
be a complete set of coset representatives of $\mathcal{L}$ in $\mathcal{L}^{\circ}.$
Then the lattice vertex operator algebra $V_{\mathcal{L}}$ is rational
and $V_{\lambda_{i}+\mathcal{L}}$ are the irreducible $V_{\mathcal{L}}$-modules
\cite{Bo,FLM,D,DLM1}.

The fusion rules for irreducible $V_{\mathcal{L}}$-modules are given
by the following \cite{DL}:

\begin{proposition}\label{Fusion-V_L} $N_{_{V_{\mathcal{L}}}}\left(_{V_{\lambda+\mathcal{L}}\ V_{\mu+\mathcal{L}}}^{V_{\gamma+\mathcal{L}}}\right)=\delta_{\lambda+\mu+\mathcal{L},\gamma+\mathcal{L}}$
for $\lambda,\mu$ and $\gamma\in\mathcal{L}^{\circ}$.\end{proposition}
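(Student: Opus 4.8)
The plan is to establish, for $\lambda,\mu,\gamma\in\mathcal{L}^{\circ}$, the two bounds $N_{V_{\lambda+\mathcal L},\,V_{\mu+\mathcal L}}^{V_{\gamma+\mathcal L}}\le\delta_{\lambda+\mu+\mathcal L,\,\gamma+\mathcal L}$ and $\ge\delta_{\lambda+\mu+\mathcal L,\,\gamma+\mathcal L}$ separately, following \cite{DL}. Write $V_{\mathcal L}=M(1)\otimes\mathbb C[\mathcal L]$, where $M(1)$ is the Heisenberg vertex operator subalgebra generated by $\mathfrak h$, so that as an $M(1)$-module each $V_{\beta+\mathcal L}=\bigoplus_{\eta\in\beta+\mathcal L}M(1,\eta)$ with $M(1,\eta)=M(1)\otimes e^{\eta}$ the Fock space on which $h(0)$ acts by the scalar $\langle h,\eta\rangle$. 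Let $I$ be an intertwining operator of type $\left(\begin{smallmatrix}V_{\gamma+\mathcal L}\\ V_{\lambda+\mathcal L}\ V_{\mu+\mathcal L}\end{smallmatrix}\right)$, i.e. $I(\cdot,z)\colon V_{\lambda+\mathcal L}\to\mathrm{Hom}(V_{\mu+\mathcal L},V_{\gamma+\mathcal L})\{z\}$. Extracting the coefficient of $z_{0}^{-1}$ in the Jacobi identity with $u=h\in\mathfrak h\subset V_{\mathcal L}$ gives the commutator formula $[h(0),I(v,z)]=I(h(0)v,z)$; hence $I$ sends the $h(0)$-eigenspaces of $V_{\lambda+\mathcal L}$ and $V_{\mu+\mathcal L}$ with eigenvalues $\langle h,\eta\rangle$ and $\langle h,\xi\rangle$ into the $h(0)$-eigenspace of $V_{\gamma+\mathcal L}$ with eigenvalue $\langle h,\eta+\xi\rangle$. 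As the $h(0)$-spectra of the three modules are $\langle h,\lambda+\mathcal L\rangle$, $\langle h,\mu+\mathcal L\rangle$, $\langle h,\gamma+\mathcal L\rangle$ and this holds for every $h\in\mathfrak h$, a nonzero $I$ forces $\gamma+\mathcal L=\lambda+\mu+\mathcal L$. This already yields $N_{V_{\lambda+\mathcal L},\,V_{\mu+\mathcal L}}^{V_{\gamma+\mathcal L}}=0$ when $\gamma\notin\lambda+\mu+\mathcal L$.

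For existence when $\gamma\equiv\lambda+\mu\ (\mathrm{mod}\ \mathcal L)$, recall from \cite{FLM,DL} that $V_{\mathcal L^{\circ}}=\bigoplus_{i}V_{\lambda_{i}+\mathcal L}$ carries the structure of an abelian intertwining (generalized vertex) algebra whose vertex operators $Y(\cdot,z)$, restricted to the summands, provide for every ordered pair $\lambda,\mu\in\mathcal L^{\circ}$ a map $Y(\cdot,z)|_{V_{\lambda+\mathcal L}}\colon V_{\lambda+\mathcal L}\to\mathrm{Hom}\bigl(V_{\mu+\mathcal L},V_{(\lambda+\mu)+\mathcal L}\bigr)\{z\}$ satisfying the three axioms of an intertwining operator (the $L(-1)$-derivative property, truncation, and the Jacobi identity). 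It is nonzero because $Y(e^{\lambda},z)e^{\mu}=\varepsilon(\lambda,\mu)\,z^{\langle\lambda,\mu\rangle}e^{\lambda+\mu}+(\text{higher order in }z)$ with the $2$-cocycle value $\varepsilon(\lambda,\mu)\ne0$. Hence $N_{V_{\lambda+\mathcal L},\,V_{\mu+\mathcal L}}^{V_{(\lambda+\mu)+\mathcal L}}\ge1$.

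For the bound $N_{V_{\lambda+\mathcal L},\,V_{\mu+\mathcal L}}^{V_{(\lambda+\mu)+\mathcal L}}\le1$, let $I$ be arbitrary of this type. The ``iterate'' form of the Jacobi identity expresses $I(a_{k}w,z)$, for $a\in V_{\mathcal L}$, in terms of $I(w,z)$ and the module operators $Y_{V_{(\lambda+\mu)+\mathcal L}}(a,z_{1})$. Applying this with $a=h\in\mathfrak h$ and with $a=e^{\beta}$, $\beta\in\mathcal L$, and using that every vector of $V_{\lambda+\mathcal L}$ is obtained from the $e^{\eta}$ ($\eta\in\lambda+\mathcal L$) by iterated action of such $a_{k}$, and that any two $e^{\eta},e^{\eta'}$ in the same coset are related by $e^{\eta'}=\varepsilon(\cdot,\cdot)^{-1}(e^{\beta})_{k}e^{\eta}$ with $\beta=\eta'-\eta\in\mathcal L$, one reduces the whole operator $I$ to a single datum $I(e^{\eta_{0}},z)$ for one fixed $\eta_{0}\in\lambda+\mathcal L$. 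Repeating the argument in the $V_{\mu+\mathcal L}$-slot reduces further to $I(e^{\eta_{0}},z)e^{\mu_{0}}$ for one fixed $\mu_{0}\in\mu+\mathcal L$; by the $h(0)$- and $L(0)$-gradings this series lies in $M(1,\eta_{0}+\mu_{0})\{z\}$ with lowest term a scalar multiple of $e^{\eta_{0}+\mu_{0}}z^{\langle\eta_{0},\mu_{0}\rangle}$, and the Heisenberg commutator relations $[h(m),I(e^{\eta_{0}},z)]=\langle h,\eta_{0}\rangle z^{m}I(e^{\eta_{0}},z)$ determine the remaining components from that one scalar. Thus $I$ is a scalar multiple of the intertwining operator built above, so $N_{V_{\lambda+\mathcal L},\,V_{\mu+\mathcal L}}^{V_{(\lambda+\mu)+\mathcal L}}\le1$, completing the proof.

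I expect the last paragraph to be the main obstacle: the vanishing and existence parts are immediate, but showing $N\le1$ requires the rigidity statement that an intertwining operator among the $V_{\beta+\mathcal L}$ is pinned down up to one scalar. Making this precise amounts to reproving that intertwining operators among Heisenberg Fock modules $M(1,\eta)$ form at most a one-dimensional space and then controlling the compatibility of the $\mathbb C[\mathcal L]$-part across a coset via the cocycle $\varepsilon$ and the invertibility of the operators $Y(e^{\beta},z)$ on individual Fock spaces; this is exactly the content of the relevant results in \cite{DL}, which may simply be cited.
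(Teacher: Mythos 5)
The paper offers no proof of this proposition --- it is quoted directly from \cite{DL}. Your reconstruction (vanishing unless $\gamma+\mathcal{L}=\lambda+\mu+\mathcal{L}$ via the $h(0)$-commutator formula, existence via the generalized vertex algebra structure on $V_{\mathcal{L}^{\circ}}$, and the upper bound $N\le 1$ via the rigidity of intertwining operators among Heisenberg Fock modules) is correct and is essentially the argument of the cited reference, with the one genuinely technical step --- the uniqueness --- rightly identified and attributed to \cite{DL}.
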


Consider the lattice vertex operator algebra $V_{A_2}$ associated to the root lattice $A_2.$
Let $\gamma_{1}$, $\gamma_{2}$ be the simple roots. Set $\gamma_{0}=-\left(\gamma_{1}+\gamma_{2}\right)$. Then $\left\langle \gamma_{i},\gamma_{i}\right\rangle =2$
and $\left\langle \gamma_{i},\gamma_{j}\right\rangle =-1$ if $i\not=j$.
Let $\tau$ be an isometry of $A_{2}$ defined by
\[
\gamma_{1}\mapsto\gamma_{2}\mapsto\gamma_{0}\mapsto\gamma_{1}.
\]
 Then $\tau$ is a fixed point free isometry of order 3 that can be
lifted naturally to an automorphism of $V_{A_{2}}$.

Let $\mathcal{L}=\mathbb{Z}\alpha\oplus\mathbb{Z}\alpha\oplus\mathbb{Z}\alpha$
where $\left\langle \alpha,\alpha\right\rangle =2$. Set $\alpha^{1}=\left(\alpha,0,0\right)$,
$\alpha^{2}=\left(0,\alpha,0\right)$ and $\alpha^{3}=\left(0,0,\alpha\right)\in\mathcal{L}$.
Let
\[
\beta_{1}=\alpha^{1}+\alpha^{2}+\alpha^{3},\beta_{2}=\alpha^{1}-\alpha^{2},\beta_{3}=\alpha^{2}-\alpha^{3},
\]
 then we have $\left\langle \beta_{1},\beta_{1}\right\rangle =6$,
$\left\langle \beta_{2},\beta_{2}\right\rangle =\left\langle \beta_{3},\beta_{3}\right\rangle =4$,
$\left\langle \beta_{1},\beta_{2}\right\rangle =\left\langle \beta_{1},\beta_{3}\right\rangle =0$,
and $\left\langle \beta_{2},\beta_{3}\right\rangle =-2$.

Let $L=\mathbb{Z}\beta_{2}+\mathbb{Z}\beta_{3}$ be the lattice spanned
by $\beta_{2}$ and $\beta_{3}$. Then $L$ is isometric to $\sqrt{2}A_{2}$ and regard $\tau$ as an isometry of $L$ in an obvious way.
Set $\beta_{0}=-\left(\beta_{2}+\beta_{3}\right)$, then the cosets
of $L$ in the dual lattice $L^{\circ}=\left\{ \beta\in\mathbb{Q}\otimes_{\mathbb{Z}}L|\left(\beta,L\right)\subset\mathbb{Z}\right\} $
are:

\[
L^{0}=L,\ L^{1}=\frac{2\beta_{2}+\beta_{3}}{3}+L,\ L^{2}=\frac{\beta_{2}+2\beta_{3}}{3}+L,
\]

\[
L_{0}=L,\ L_{a}=\frac{\beta_{3}}{2}+L,\ L_{b}=\frac{\beta_{0}}{2}+L,\ L_{c}=\frac{\beta_{2}}{2}+L.
\]
Set $L^{\left(i,j\right)}=L_{i}+L^{j},$ for $i=0,a,b,c$ and $j=0,1,2$.

\begin{remark} Note that $\tau$ can be lifted to an automorphism of $V_L$ and we also denote this automorphism by $\tau.$ For a simple $V_{L}$-module $\left(U,Y_{U}\right)$,
let $\left(U\circ\tau,Y_{U\circ\tau}\right)$ be a new $V_{L}$-module
where $U\circ\tau=U$ as vector spaces and $Y_{U\circ\tau}\left(v,z\right)=Y_{U}\left(\tau v,z\right)$
for $v\in V_{L}$ \cite{DLM3}. If $U$ and $U\circ\tau$ are equivalent
$V_{L}$-modules, $U$ is said to be $\tau$-stable. The following
result is from  \cite{TY1}.
\end{remark}

\begin{lemma} \label{tau-stable}(1) $V_{L^{\left(0,j\right)}},j=0,1,2$
are $\tau$-stable.

(2) $V_{L^{\left(a,j\right)}}\circ\tau=V_{L^{\left(c,j\right)}}$,
$V_{L^{\left(c,j\right)}}\circ\tau=V_{L^{\left(b,j\right)}},$ and
$V_{L^{\left(b,j\right)}}\circ\tau=V_{L^{\left(a,j\right)}},j=0,1,2.$

(3) $V_L^{\tau}$ is rational and $C_2$-cofinite.
\end{lemma}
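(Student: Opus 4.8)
The statement to prove is Lemma~\ref{tau-stable}, which has three parts concerning the order-$3$ isometry $\tau$ of the lattice $L \cong \sqrt{2}A_2$ and the induced automorphism of $V_L$. Parts (1) and (2) are about how $\tau$ permutes the irreducible $V_L$-modules $V_{L^{(i,j)}}$, and part (3) asserts that the orbifold $V_L^\tau$ is rational and $C_2$-cofinite.

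\textbf{Approach to parts (1) and (2).} The plan is to compute directly how $\tau$ acts on the coset representatives of $L$ in $L^\circ$. Recall that for a simple $V_L$-module $V_{\mu + L}$, the twisted module $V_{\mu+L} \circ \tau$ is isomorphic to $V_{\tau^{-1}(\mu) + L}$ (equivalently $V_{\tau(\mu)+L}$, since $\tau$ has order $3$ and we only care about the coset), because the isometry $\tau$ intertwines the two module structures. So the whole problem reduces to tracking $\tau$ on the representatives. First I would express $\tau$ explicitly on $\beta_2, \beta_3$: from $\tau\colon \gamma_1 \mapsto \gamma_2 \mapsto \gamma_0 \mapsto \gamma_1$ translated through the identification $L \cong \sqrt 2 A_2$ with $\beta_2, \beta_3$ playing the roles of simple roots and $\beta_0 = -(\beta_2+\beta_3)$, one gets $\tau\colon \beta_2 \mapsto \beta_3 \mapsto \beta_0 \mapsto \beta_2$. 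Then for part (1): each $L^{(0,j)} = L + L^j$ where $L^j$ is one of $L, \frac{2\beta_2+\beta_3}{3}+L, \frac{\beta_2+2\beta_3}{3}+L$; I would check $\tau\bigl(\frac{2\beta_2+\beta_3}{3}\bigr) = \frac{2\beta_3+\beta_0}{3} = \frac{2\beta_3-\beta_2-\beta_3}{3} = \frac{\beta_3-\beta_2}{3}$, and verify this lies in $L^1$ modulo $L$ — i.e. $\frac{\beta_3-\beta_2}{3} - \frac{2\beta_2+\beta_3}{3} = \frac{-3\beta_2}{3} = -\beta_2 \in L$, so indeed $V_{L^{(0,1)}}$ is $\tau$-stable; similarly for $j=0$ (trivially) and $j=2$. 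For part (2), I would compute $\tau\bigl(\frac{\beta_3}{2}\bigr) = \frac{\beta_0}{2}$, so $V_{L_a} \circ \tau = V_{L_b}$ — wait, one must be careful about the direction of the composition versus the action; the precise bookkeeping of whether $U \circ \tau$ corresponds to $\tau$ or $\tau^{-1}$ on representatives is exactly the kind of sign/direction issue that needs care, and matching it to the stated cyclic pattern $a \to c \to b \to a$ fixes the convention. Then the $L^j$-part is handled by the computation already done in part (1), since $\tau$ preserves each $L^j$ coset.

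\textbf{Approach to part (3).} This is the substantive claim. The cleanest route is to invoke the general machinery now available in the literature: $\tau$ is an order-$3$ automorphism of the lattice VOA $V_L$, and by the results of \cite{M2} and \cite{CM} (cited in the introduction), the orbifold of a rational $C_2$-cofinite VOA by a finite-order automorphism is again rational and $C_2$-cofinite — indeed for cyclic groups this is exactly their theorem. Since $V_L$ is rational and $C_2$-cofinite (being a lattice VOA for a positive-definite even lattice), $V_L^\tau = V_L^{\langle\tau\rangle}$ inherits both properties. Alternatively — and this is presumably what \cite{TY1} does, since the lemma is attributed there — one can argue more concretely: $C_2$-cofiniteness of $V_L^\tau$ follows from $C_2$-cofiniteness results for orbifolds/permutation orbifolds in \cite{A1,A2,M1,M2}, and rationality was established in \cite{TY1,TY2} by explicit analysis of $V_{\sqrt 2 A_2}^\tau$, identifying its module category. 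I would state that part (3) is quoted directly from \cite{TY1} (as the remark preceding the lemma already signals), and note that it also follows from the general orbifold rationality theorem of \cite{CM} together with $C_2$-cofiniteness from \cite{M2}.

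\textbf{Main obstacle.} Parts (1) and (2) are routine coset arithmetic once $\tau$ is written in the $\beta_2,\beta_3$ basis; the only trap is the direction convention for $U \circ \tau$ versus the action on weights, which I would pin down by comparing against any one explicitly known case. Part (3) is not something to prove from scratch here — the real content lives in \cite{TY1,TY2} and the general orbifold-rationality literature — so the ``obstacle'' is simply citing it correctly and noting that the hypotheses (finite order, $V_L$ rational and $C_2$-cofinite) are met. Thus the lemma as stated is essentially a transcription/assembly of known results, and the proof will be correspondingly short: a half-page of coset computations for (1) and (2), and a sentence of citation for (3).
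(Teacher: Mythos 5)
Your proposal is correct and matches the paper, which gives no proof at all for this lemma but simply quotes it from \cite{TY1} (as the preceding remark signals); your coset computations for (1) and (2) are the standard verification, and your deferral of (3) to \cite{TY1} together with the general cyclic-orbifold results of \cite{CM,M2} is exactly how the paper treats it. The direction convention you flagged resolves as $V_{\mu+L}\circ\tau\cong V_{\tau^{-1}\mu+L}$ (since $h(0)$ on $U\circ\tau$ acts as $(\tau h)(0)$ on $U$, with eigenvalues $\langle h,\tau^{-1}(\mu+L)\rangle$), which with $\tau^{-1}:\beta_3\mapsto\beta_2\mapsto\beta_0\mapsto\beta_3$ yields precisely the stated cycle $V_{L^{(a,j)}}\circ\tau=V_{L^{(c,j)}}$, $V_{L^{(c,j)}}\circ\tau=V_{L^{(b,j)}}$, $V_{L^{(b,j)}}\circ\tau=V_{L^{(a,j)}}$.
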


Note that $\alpha^{1}=\frac{\beta_{1}+2\beta_{2}+\beta_{3}}{3}$,
$\alpha^{2}=\frac{\beta_{1}-\beta_{2}+\beta_{3}}{3}$, $\alpha^{3}=\frac{\beta_{1}-\beta_{2}-2\beta_{3}}{3}.$
We have
\begin{equation}
\begin{aligned}\mathcal{L} & =\mathbb{Z}\alpha\oplus\mathbb{Z}\alpha\oplus\mathbb{Z}\alpha\\
 & =\mathbb{Z}\frac{\beta_{1}+2\beta_{2}+\beta_{3}}{3}\oplus\mathbb{Z}\frac{\beta_{1}-\beta_{2}+\beta_{3}}{3}\oplus\mathbb{Z}\frac{\beta_{1}-\beta_{2}-2\beta_{3}}{3}\\
 & =\left(\mathbb{Z}\beta_{1}\oplus L\right)\cup\left(\left(\frac{1}{3}\beta_{1}+\mathbb{Z}\beta_{1}\right)\oplus\left(\frac{2\beta_{2}+\beta_{3}}{3}+L\right)\right)\\
 & \cup\left(\left(\frac{2}{3}\beta_{1}+\mathbb{Z}\beta_{1}\right)\oplus\left(\frac{\beta_{2}+2\beta_{3}}{3}+L\right)\right).
\end{aligned}
\label{decomposition of lattice}
\end{equation}
Thus
\[
V_{\mathbb{Z}\alpha}\otimes V_{\mathbb{Z}\alpha}\otimes V_{\mathbb{Z}\alpha}\cong V_{\mathbb{Z}\beta_{1}}\otimes V_{L}+V_{\frac{1}{3}\beta_{1}+\mathbb{Z}\beta_{1}}\otimes V_{\frac{2\beta_{2}+\beta_{3}}{3}+L}+V_{\frac{2}{3}\beta_{1}+\mathbb{Z}\beta_{1}}\otimes V_{\frac{\beta_{2}+2\beta_{3}}{3}+L}
\]
 where $V_{\frac{1}{3}\beta_{1}+\mathbb{Z}\beta_{1}}$, $V_{\frac{2}{3}\beta_{1}+\mathbb{Z}\beta_{1}}$
are irreducible modules of $V_{\mathbb{Z}\beta_{1}}$ and $V_{\frac{2\beta_{2}+\beta_{3}}{3}+L}$
, $V_{\frac{\beta_{2}+2\beta_{3}}{3}+L}$ are irreducible modules
of $V_{L}.$

Let $\sigma=\left(1\ 2\ 3\right)$ be the isometry of $\mathcal{L}$ permuting $\alpha_1,\alpha_2, \alpha_3.$ We also use  $\sigma$ to denote the corresponding permutation automorphism of $V_{\mathbb{Z}\alpha}\otimes V_{\mathbb{Z}\alpha}\otimes V_{\mathbb{Z}\alpha}$. Clearly, the group $\<\sigma\>$ generated by $\sigma$ is isomorphic to $\Z_3.$
Note that $\sigma\left(\beta_{1}\right)=\beta_{1}$, $\sigma\left(\beta_{2}\right)=\beta_{3}$,
$\sigma\left(\beta_{3}\right)=-\left(\beta_{2}+\beta_{3}\right)$.
So $\sigma|_{V_{L}}=\tau$ and hence

\begin{alignat*}{1}
\left(V_{\mathbb{Z}\alpha}\otimes V_{\mathbb{Z}\alpha}\otimes V_{\mathbb{Z}\alpha}\right)^{\mathbb{Z}_{3}} & \cong V_{\mathbb{Z}\beta_{1}}\otimes V_{L}^{\tau}+V_{\frac{1}{3}\beta_{1}+\mathbb{Z}\beta_{1}}\otimes V_{\frac{2\beta_{2}+\beta_{3}}{3}+L}^{\tau}+V_{\frac{2}{3}\beta_{1}+\mathbb{Z}\beta_{1}}\otimes V_{\frac{\beta_{2}+2\beta_{3}}{3}+L}^{\tau}
\end{alignat*}
For short, we denote $\left(V_{\mathbb{Z}\alpha}\otimes V_{\mathbb{Z}\alpha}\otimes V_{\mathbb{Z}\alpha}\right)^{\mathbb{Z}_{3}}$  by $\mathcal{U}$.

The orbifold vertex operator algebra $V_{L}^{\tau}$ has been well
studied in \cite{TY1,TY2,C,CL}. For any $\tau$-invariant $V_{L}$-module
$U$ and $\epsilon\in\mathbb{Z}_{3}$, denote
\[
U\left[\epsilon\right]=\left\{ u\in U|\tau u=e^{\frac{2\pi i\epsilon}{3}}u\right\} .
\]

Recall from \cite{TY1} that for $k=1,2$, $V_L$ has three inequivalent irreducible $\tau^k$-twisted modules $V_{L}^{T,j}(\tau^{k})$ for $j\in\Z_3$ and $\tau$ acts on  $V_{L}^{T,j}(\tau^{k})$ with
eigenspaces $V_{L}^{T,j}\left(\tau^{k}\right)\left[\epsilon\right]$ for $\epsilon\in\mathbb{Z}_{3}.$
The irreducible modules for $V_{L}^{\tau}$ are classified in \cite{TY1}:
\begin{proposition}\label{rationality of V_Ltau}The vertex operator
algebra $V_{L}^{\tau}$ is a simple, rational, $C_{2}$-cofinite,
and of CFT type. There are exactly 30 irreducible $V_{L}^{\tau}$-modules
up to isomorphism:

(1) $V_{L^{\left(0,j\right)}}\left[\epsilon\right],j,\epsilon\in\mathbb{Z}_{3}.$

(2) $V_{L^{\left(c,j\right)}},j\in\mathbb{Z}_{3}.$

(3) $V_{L}^{T,j}\left(\tau^{k}\right)\left[\epsilon\right],$ $j,\epsilon\in\mathbb{Z}_{3}$
and $k=1,2$.

Weights of these modules are given by
\[
\mbox{wt}V_{L^{\left(0,j\right)}}\left[\epsilon\right]\in\frac{2j^{2}}{3}+\mathbb{Z},
\]

\[
\mbox{wt}V_{L}^{T,j}\left(\tau^{k}\right)\left[\epsilon\right]\in\frac{10-3\left(j^{2}+\epsilon\right)}{9}+\mathbb{Z},
\]
 for $k=1,2$, $j,\epsilon\in\mathbb{Z}_{3}$.

\end{proposition}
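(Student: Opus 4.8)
The plan is to derive the classification by combining the description of the (twisted) $V_L$-modules from \cite{TY1} with general orbifold theory for the cyclic group $\langle\tau\rangle\cong\mathbb{Z}_3$. That $V_L^\tau$ is rational and $C_2$-cofinite is Lemma \ref{tau-stable}(3), and it is simple and of CFT type because $V_L$ is. Since $V_L$ is rational and $C_2$-cofinite and $\langle\tau\rangle$ is cyclic, every irreducible $V_L^\tau$-module embeds in some irreducible $\tau^k$-twisted $V_L$-module with $k\in\{0,1,2\}$, and conversely the restriction to $V_L^\tau$ of any irreducible $\tau^k$-twisted $V_L$-module decomposes into irreducible $V_L^\tau$-modules \cite{DRX}. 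So the whole problem reduces to: (i) list the irreducible $\tau^k$-twisted $V_L$-modules; (ii) decide which of them are $\tau$-stable; (iii) decompose the $\tau$-stable ones into $\tau$-eigenspaces.

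For $k=0$ I would start from the twelve irreducible $V_L$-modules $V_{L^{(i,j)}}$, $i\in\{0,a,b,c\}$, $j\in\mathbb{Z}_3$ (the cosets $L^{(i,j)}$ run over all of $L^\circ/L$), and apply Lemma \ref{tau-stable}: the three modules $V_{L^{(0,j)}}$ are $\tau$-stable, while the remaining nine form three $\tau$-orbits $\{V_{L^{(a,j)}},V_{L^{(c,j)}},V_{L^{(b,j)}}\}$. By Clifford theory for vertex operator algebras \cite{DRX}, a $\tau$-stable irreducible $V_L$-module $U$, once a lift of $\tau$ is fixed, decomposes as $U=\bigoplus_{\epsilon\in\mathbb{Z}_3}U[\epsilon]$ into three inequivalent irreducible $V_L^\tau$-modules, while a non-$\tau$-stable $U$ restricts to an irreducible $V_L^\tau$-module with $U|_{V_L^\tau}\cong(U\circ\tau)|_{V_L^\tau}$. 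This produces the nine modules $V_{L^{(0,j)}}[\epsilon]$ of (1) and the three modules $V_{L^{(c,j)}}$ of (2). For $k=1,2$ I would invoke \cite{TY1}, which supplies exactly three inequivalent irreducible $\tau^k$-twisted $V_L$-modules $V_L^{T,j}(\tau^k)$, $j\in\mathbb{Z}_3$; granting that each is $\tau$-stable, the same splitting gives the eighteen modules $V_L^{T,j}(\tau^k)[\epsilon]$ of (3). In all, $9+3+18=30$.

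It then remains to see that the thirty modules are pairwise inequivalent and to record their conformal weights. Inequivalence within a single family is immediate from the $V_L$-module structure. The twisted family (3) is separated from the untwisted families (1) and (2) by conformal weights: since $10-3(j^2+\epsilon)\equiv1\pmod 3$, the weights in (3) lie in $\frac19+\frac13\mathbb{Z}$ and so have denominator $9$ in lowest terms, whereas all weights in (1) and (2) have denominators dividing $12$; and the $\tau$-twisted and $\tau^2$-twisted modules inside (3) are distinguished as the $V_L^\tau$-summands of twisted modules attached to $\tau$ versus $\tau^2$ — these are mutually contragredient but not isomorphic — and in any event the orbifold count already forces exactly thirty inequivalent irreducibles. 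Finally the conformal weights come from the minimal norms $\frac12\langle\lambda,\lambda\rangle$ of the cosets $L^{(i,j)}$, refined by the $\epsilon$-grading in the $\tau$-stable cases, and from the vacuum anomaly of the $\tau^k$-twisted free boson on $\mathfrak{h}=\mathbb{C}\otimes_{\mathbb{Z}}L$ — whose nonzero $\tau$-eigenvalues are the two primitive cube roots of unity — corrected by the $j$- and $\epsilon$-dependent lattice-character contributions; all of this is carried out in \cite{TY1}.

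The main obstacle is steps (ii)-(iii) in the twisted case: showing that each of the three irreducible $\tau^k$-twisted $V_L$-modules is genuinely $\tau$-stable, and hence contributes three rather than fewer irreducible $V_L^\tau$-modules. With only three such modules available, $\tau$ either fixes all of them or cyclically permutes the triple; the first alternative must hold, since the second would produce far fewer than thirty irreducible $V_L^\tau$-modules, but establishing it directly requires the explicit realization of $V_L^{T,j}(\tau^k)$ and of the $\tau$-action on it, which is the technical heart of \cite{TY1}.
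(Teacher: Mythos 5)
The paper offers no proof of this proposition: it is quoted directly from \cite{TY1}, so there is no in-paper argument to compare yours against step by step. Your reconstruction --- reduce to the irreducible $\tau^{k}$-twisted $V_{L}$-modules via \cite{DRX}, sort them by $\tau$-stability using Lemma \ref{tau-stable}, split the stable ones into the three eigenspaces and keep one representative per orbit of the unstable ones --- is the correct skeleton, and it is essentially how \cite{TY1} together with standard cyclic-orbifold theory produces the list of $30$; your weight-denominator argument for separating the twisted family from the untwisted ones is also sound. The one soft spot is the justification of $\tau$-stability of the twisted modules: the clause ``the second alternative would produce far fewer than thirty'' is circular, since $30$ is exactly the count being established, and note that a global-dimension count does not break the tie either (a single $\tau$-orbit of the three $\tau$-twisted modules would contribute one irreducible of quantum dimension $6$, hence $36$, exactly as nine stable eigenspaces of quantum dimension $2$ contribute $9\cdot 2^{2}=36$); so this step genuinely rests on the explicit realization of $V_{L}^{T,j}(\tau^{k})$ and of the lifted $\tau$-action in \cite{TY1}, as you correctly acknowledge at the end.
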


Use above notation, we have
\begin{align*}
\mathcal{U} & \cong V_{\mathbb{Z}\beta_{1}}\otimes V_{L}^{\tau}+V_{\frac{1}{3}\beta_{1}+\mathbb{Z}\beta_{1}}\otimes V_{L^{\left(0,1\right)}}\left[0\right]+V_{\frac{2}{3}\beta_{1}+\mathbb{Z}\beta_{1}}\otimes V_{L^{\left(0,2\right)}}\left[0\right].
\end{align*}

Quantum dimensions of irreducible $V_{L}^{\tau}$-modules are obtained
in \cite{C}:

\begin{proposition} \label{quantum dimension of V_Ltau}

(1) qdim$_{V_{L}^{\tau}}V_{L^{\left(0,j\right)}}\left[\epsilon\right]=1$,
for $j,\epsilon\in\mathbb{Z}_{3}.$

(2) qdim$_{V_{L}^{\tau}}V_{L^{\left(c,j\right)}}=3,$ for $j\in\mathbb{Z}_{3}.$

(3) qdim$_{V_{L}^{\tau}}V_{L}^{T,j}\left(\tau^{k}\right)\left[\epsilon\right]=2,$
for $j,\epsilon\in\mathbb{Z}_{3}$ and $k=1,2$.

\end{proposition}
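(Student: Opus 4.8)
The plan is to reduce everything to the quantum Galois principle together with the fact that the fusion algebra of $V_L$ is group-like. Since $V_L$ is rational, $C_2$-cofinite, of CFT type with $V_L\cong V_L'$ and $\<\tau\>\cong\Z_3$, the results of \cite{DJX} give $\mathrm{qdim}_{V_L^\tau}V_L=3$, i.e. $Z_{V_L}(iy)/Z_{V_L^\tau}(iy)\to3$ as $y\to0^+$. By Proposition \ref{Fusion-V_L} and Proposition \ref{possible values of quantum dimensions}(3) every irreducible $V_L$-module $U$ is a simple current, so $Z_U(iy)/Z_{V_L}(iy)\to1$, whence
\[
\lim_{y\to0^+}\frac{Z_U(iy)}{Z_{V_L^\tau}(iy)}=3.
\]
If $U$ is $\tau$-stable then $U|_{V_L^\tau}=\bigoplus_{\epsilon\in\Z_3}U[\epsilon]$ is a sum of three irreducible $V_L^\tau$-modules, cyclically permuted under fusion by the simple currents $V_{L^{(0,0)}}[\epsilon']=V_L[\epsilon']$; by Proposition \ref{possible values of quantum dimensions}(2) these three summands have a common quantum dimension over $V_L^\tau$, which is therefore $3/3=1$. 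If instead $U,\,U\circ\tau,\,U\circ\tau^2$ are mutually inequivalent, then (by \cite{TY1}) the three restrictions coincide with one irreducible $V_L^\tau$-module, whose quantum dimension over $V_L^\tau$ is $3$.

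Part (1) follows from this with $U=V_{L^{(0,j)}}$, which is $\tau$-stable by Lemma \ref{tau-stable}(1); part (2) follows with $U=V_{L^{(a,j)}}$, whose $\tau$-orbit $\{V_{L^{(a,j)}},V_{L^{(c,j)}},V_{L^{(b,j)}}\}$ has size $3$ by Lemma \ref{tau-stable}(2), so its common restriction---the module denoted $V_{L^{(c,j)}}$ in Proposition \ref{rationality of V_Ltau}---has quantum dimension $3$.

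For part (3), each $V_L^{T,j}(\tau^k)$ is $\tau$-stable, so it restricts to three irreducible $V_L^\tau$-modules $V_L^{T,j}(\tau^k)[\epsilon]$ that are again cyclically permuted by the $V_L[\epsilon']$ and hence share a common quantum dimension $d_{j,k}$; thus it is enough to prove $Z_{V_L^{T,j}(\tau^k)}(iy)/Z_{V_L^\tau}(iy)\to6$, equivalently $\mathrm{qdim}_{V_L}V_L^{T,j}(\tau^k)=2$ (the quantum dimension of a twisted module in the sense of Definition \ref{quantum dimension}). I would establish this by a small-$y$ asymptotic of characters. Using the construction of the $\tau^k$-twisted modules in \cite{FLM,BDM} (see also \cite{TY1}): since $\tau^k$ is fixed-point-free on $L$, the character of $V_L^{T,j}(\tau^k)$ has no lattice theta-factor and equals, up to a power of $q$ tending to $1$ and a one-dimensional multiplicity space, the $\tau^k$-twisted rank-two free-boson partition function $\prod_{n\ge1}(1-q^{n-1/3})^{-1}(1-q^{n-2/3})^{-1}$, while $Z_{V_L}(q)=q^{-1/12}\Theta_L(q)\prod_{n\ge1}(1-q^n)^{-2}$. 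Putting $q=e^{-\epsilon}$, Poisson summation gives $\Theta_L(e^{-\epsilon})\sim(2\pi/\epsilon)(\det L)^{-1/2}$ with $\det L=\det(\sqrt2A_2)=12$; combining this with the factorization $\prod_{n\ge1}(1-q^{n-1/3})(1-q^{n-2/3})=\prod_{m\ge1}(1-q^{m/3})\prod_{n\ge1}(1-q^n)^{-1}$ and the Dedekind-$\eta$ asymptotics $\prod_{n\ge1}(1-e^{-n\epsilon'})\sim\sqrt{2\pi/\epsilon'}\,e^{-\pi^2/(6\epsilon')}$, the divergent exponentials cancel and one obtains $\mathrm{qdim}_{V_L}V_L^{T,j}(\tau^k)=(\det L)^{1/2}/\sqrt3=2$, hence $d_{j,k}=2$. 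As a cross-check this makes $\sum_M(\mathrm{qdim}_{V_L^\tau}M)^2=9\cdot1+3\cdot9+18\cdot4=108=|\Z_3|^2\,|L^\circ/L|$, so alternatively one may take the orbifold global-dimension identity $\mathrm{glob}(V_L^\tau)=|\Z_3|^2\,\mathrm{glob}(V_L)$ as an input and solve for $d_{j,k}$ without running the $\eta$-asymptotics.

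\textbf{Expected main obstacle.} Parts (1)--(2) are formal once quantum Galois and the group-like fusion of $V_L$ are granted. The real work is (3): reading off from \cite{BDM,TY1} that each fixed-point-free twisted module $V_L^{T,j}(\tau^k)$ has a one-dimensional multiplicity space (so all three values of $j$ contribute the same), and arranging the modular/asymptotic estimates so that the exponentially large factors in $Z_{V_L}(iy)$ and $Z_{V_L^{T,j}(\tau^k)}(iy)$ cancel exactly, leaving the finite ratio $\sqrt{\det L}/\sqrt3=2$.
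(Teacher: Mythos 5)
Your proposal is correct, but it is not comparable to anything in the paper: the paper offers no proof of this proposition at all, simply citing \cite{C} for the values. What you supply is a self-contained derivation, and it hangs together. Parts (1)--(2) are exactly the quantum Galois argument one would expect: $\mathrm{qdim}_{V_L^{\tau}}V_L=3$ from \cite{DJX}, every irreducible $V_L$-module has $\mathrm{qdim}_{V_L}=1$ since the fusion algebra of a lattice VOA is group-like (Proposition \ref{Fusion-V_L}), and then the $\tau$-stable versus free-orbit dichotomy of Lemma \ref{tau-stable} splits the value $3$ as $1+1+1$ or leaves it whole. Your equalization of the three eigencomponents via fusion with the order-three simple currents $V_{L^{(0,0)}}[\epsilon']$ is valid given Proposition \ref{Fusion Product of V_L tau}(i),(iv); just be aware that those fusion rules are themselves imported from \cite{TY2,C,CL}, so if you want an argument logically independent of \cite{C} you should instead invoke the general orbifold fact (from \cite{DM,DY,DRX}) that the eigenspace simple currents permute the $\epsilon$-components transitively, which needs no input from the $\sqrt{2}A_2$ literature. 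For part (3) your character asymptotics check out: with $\tau^k$ fixed-point free the twisted character is $(\dim T)\,q^{h-c/24}\prod_{n\geq1}(1-q^{n-1/3})^{-1}(1-q^{n-2/3})^{-1}$, the regrouping $\prod_{n\geq1}(1-q^{n-1/3})(1-q^{n-2/3})=\prod_{m\geq1}(1-q^{m/3})\prod_{n\geq1}(1-q^{n})^{-1}$ is correct, the exponential factors cancel against $Z_{V_L}$, and $\sqrt{\det L}/\sqrt{3}=\sqrt{12}/\sqrt{3}=2$. The one substantive point you must actually pin down in \cite{TY1} is that $\dim T=1$ for each of the three twisted modules; this follows because $|L/(1-\tau)L|=\det(1-\tau)=3$ and the three modules are indexed by the characters of the abelian group $L/(1-\tau)L$, and it is corroborated by the lowest weight $\frac{1}{9}$ matching the rank-two vacuum anomaly $\frac{1}{4}\sum_{j=1}^{2}\frac{j}{3}(1-\frac{j}{3})$. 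Your cross-check $9+27+18\cdot4=108=|\Z_3|^2\cdot12$ is the cleaner route if one is willing to take $\mathrm{glob}(V^G)=|G|^2\,\mathrm{glob}(V)$ as known, which the paper itself does in Section 5. In short: correct, and more informative than the paper's bare citation.
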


The fusion products among irreducible $V_{L}^{\tau}$-modules are
computed in \cite{TY2,C,CL}:

\begin{proposition} \label{Fusion Product of V_L tau}Let $\epsilon,\epsilon_{1},i,j\in\mathbb{Z}_{3}$
and $k=1,2$.

(i) $V_{L^{\left(0,i\right)}}\left[\epsilon\right]\boxtimes V_{L^{\left(0,j\right)}}\left[\epsilon_{1}\right]=V_{L^{\left(0,i+j\right)}}\left[\epsilon+\epsilon_{1}\right];$

(ii) $V_{L^{\left(0,i\right)}}\left[\epsilon\right]\boxtimes V_{L^{\left(c,j\right)}}=V_{L^{\left(c,i+j\right)}};$

(iii) $V_{L^{\left(c,i\right)}}\boxtimes V_{L^{\left(c,j\right)}}=\sum_{\rho=0}^{2}V_{L^{\left(0,i+j\right)}}\left[\rho\right]+2V_{L^{\left(c,j+j\right)}};$

(iv) $V_{L^{\left(0,i\right)}}\left[\epsilon\right]\boxtimes V_{L}^{T,j}\left(\tau^{k}\right)\left[\epsilon_{1}\right]=V_{L}^{T,j-ki}\left(\tau^{k}\right)\left[k\epsilon+\epsilon_{1}\right];$

(v) $V_{L^{\left(c,i\right)}}\boxtimes V_{L}^{T,j}\left(\tau^{k}\right)\left[\epsilon\right]=\sum_{\rho=0}^{2}V_{L}^{T,j-ki}\left(\tau^{k}\right)\left[\rho\right];$

(vi) $V_{L}^{T,i}\left(\tau^{k}\right)\left[\epsilon\right]\boxtimes V_{L}^{T,j}\left(\tau^{k}\right)\left[\epsilon_{1}\right]=V_{L}^{T,-\left(i+j\right)}\left(\tau^{2k}\right)\left[-\left(\epsilon+\epsilon_{1}\right)\right]+V_{L}^{T,-\left(i+j\right)}\left(\tau^{2k}\right)\left[2-\left(\epsilon+\epsilon_{1}\right)\right];$

(vii) $V_{L}^{T,i}\left(\tau\right)\left[\epsilon\right]\boxtimes V_{L}^{T,j}\left(\tau^{2}\right)\left[\epsilon_{1}\right]=V_{L^{\left(0,i+2j\right)}}\left[\epsilon+2\epsilon_{1}\right]+V_{L^{c,i+2j}}$

\end{proposition}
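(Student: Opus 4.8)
The plan is to pin down each of (i)--(vii) by playing three tools against one another: the multiplicativity of quantum dimensions under fusion (Proposition~\ref{possible values of quantum dimensions}(2)) together with the fact that quantum dimension $1$ characterizes simple currents (Proposition~\ref{possible values of quantum dimensions}(3)); the $L(0)$-weight data of Proposition~\ref{rationality of V_Ltau}, whose denominators separate the untwisted modules $V_{L^{(0,j)}}[\epsilon],V_{L^{(c,j)}}$ from the $\tau^k$-twisted modules $V_L^{T,j}(\tau^k)[\epsilon]$; and explicit nonzero intertwining operators obtained by restriction, since an intertwining operator among $V_L$-modules (resp.\ among $\tau^k$-twisted $V_L$-modules) restricts to one among the corresponding $V_L^\tau$-modules, and its behaviour under $\tau$ records how the $\Z_3$-coset label $j$, the eigenvalue label $\epsilon$, and the twist label $k$ recombine. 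Quantum dimensions fix the \emph{total} quantum dimension of each fusion product, hence cap the list of possible summands; the restriction arguments show particular summands genuinely occur; when the two agree the fusion product is determined.

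Concretely: for (i) and (ii) the first factor is a simple current, so the fusion product is a single irreducible of quantum dimension $1$ resp.\ $3$, and the lattice fusion rules of Proposition~\ref{Fusion-V_L} give a nonzero $V_L$-intertwining operator realizing $V_{L^{(0,i)}}\boxtimes_{V_L}V_{L^{(0,j)}}=V_{L^{(0,i+j)}}$ (resp.\ $\boxtimes_{V_L}V_{L^{(c,j)}}=V_{L^{(c,i+j)}}$); restricting to $V_L^\tau$ and to $\tau$-eigenspaces, $\tau$-equivariance shows the $\epsilon$-eigenspace pairs with the $\epsilon_1$-eigenspace into the $(\epsilon+\epsilon_1)$-eigenspace (resp.\ into $V_{L^{(c,i+j)}}$), and matching quantum dimensions closes these cases. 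Item (iv) is the same with the second factor a $\tau^k$-twisted module: one uses the $\tau^k$-twisted intertwining operators of $V_L$ and the $\tau$-action on the twisted sectors described in \cite{TY1}, the shifts $j\mapsto j-ki$ and $\epsilon_1\mapsto k\epsilon+\epsilon_1$ coming from how $\tau$ conjugates the untwisted operator inside the $\tau^k$-twisted sector. For (iii) and (v) the totals are $3\cdot3=9$ and $3\cdot2=6$; the weight formulas force all summands into the untwisted sector (resp.\ the $\tau^k$-twisted sector), restriction from $V_L$ summed over the $\tau$-translates $a,b,c$ produces the modules $V_{L^{(0,i+j)}}[\rho]$ and $V_{L^{(c,i+j)}}$ (resp.\ the $V_L^{T,j-ki}(\tau^k)[\rho]$), and the total quantum dimensions $3\cdot1+2\cdot3=9$ and $3\cdot2=6$ then force the stated multiplicities. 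For (vi) and (vii) two quantum-dimension-$2$ twisted modules are fused; since $\tau^k\cdot\tau^k=\tau^{2k}$ and $\tau\cdot\tau^2=\mathrm{id}$ the products land in the $\tau^{2k}$-twisted resp.\ the untwisted sector, with totals $2+2=4$ resp.\ $1+3=4$, and the precise labels $-(i+j)$, $-(\epsilon+\epsilon_1)$, $2-(\epsilon+\epsilon_1)$ and $i+2j$, $\epsilon+2\epsilon_1$ are read off from the intertwining operators among twisted sectors constructed in \cite{TY1,TY2}. Throughout, the fusion-rule symmetries of Proposition~\ref{fusion rule symmmetry property} and associativity with the simple-current fusions (i),(ii),(iv) cut down the number of cases to be done by hand.

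The main obstacle is the exact multiplicities in (iii),(v) and the exact pair of eigenvalue labels in (vi): quantum dimensions only control the total, and, for instance, a total of $9$ in (iii) could a priori decompose as nine simple currents, as three copies of a quantum-dimension-$3$ module, or as assorted mixtures, so ruling these out genuinely requires computing dimensions of spaces of intertwining operators --- via the Zhu-algebra and bimodule machinery for $V_L^\tau$, or by exhibiting enough independent intertwining operators inside the lattice and twisted realizations --- which is exactly the content of \cite{TY2,C,CL} and is the technical heart of the argument; the remainder is careful bookkeeping of the $\Z_3$-coset, the $\{0,a,b,c\}$-type, the eigenvalue, and the twist labels through the equivariance computations. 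One may alternatively view the whole computation as the $\Z_3$-equivariantization of the $\Z_3$-crossed braided category of twisted $V_L$-modules, but this still takes the crossed structure of \cite{TY1} as input and so does not bypass the hard step.
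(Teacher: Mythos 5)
You should first note that the paper contains no proof of this proposition: it is imported verbatim from \cite{TY2,C,CL} (the sentence preceding it reads ``The fusion products among irreducible $V_{L}^{\tau}$-modules are computed in \cite{TY2,C,CL}''), so there is no in-paper argument to measure your proposal against. Judged on its own terms, your outline follows the standard strategy --- quantum-dimension bookkeeping via Proposition \ref{possible values of quantum dimensions}, sector constraints from the weight data of Proposition \ref{rationality of V_Ltau}, and restriction of lattice and twisted intertwining operators --- and for the simple-current cases (i), (ii), (iv) this is essentially the same method the paper itself later uses to prove its own fusion rules (\ref{Fusion-nondiag-diag}) and (\ref{Fusion-diag-diag}) for $\mathcal{U}$, so those cases could plausibly be completed (note, though, a normalization issue: the eigenvalue labels $\epsilon+\epsilon_1$ in (i) and $k\epsilon+\epsilon_1$ in (iv) depend on how the lift of $\tau$ to $V_{L^{(0,j)}}$ and to $V_{L}^{T,j}(\tau^k)$ is fixed; ``restricting to eigenspaces'' determines the answer only up to an additive constant tied to that choice, which must be taken from \cite{TY1}).

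The genuine gap is the one you yourself flag, and it cannot be waved away: in (iii) a total quantum dimension of $9$ is compatible with many decompositions ($9\cdot 1$, $3\cdot 3$, $1\cdot 1+4\cdot 2$, \dots), and your restriction argument only certifies that certain summands occur with multiplicity at least one --- it does not pin the coefficient of $V_{L^{(c,i+j)}}$ at exactly $2$; likewise in (v) it does not force each $V_{L}^{T,j-ki}(\tau^k)[\rho]$ to occur exactly once, and in (vi) it does not select which two of the eigenvalue labels $\rho\in\Z_3$ appear on $V_{L}^{T,-(i+j)}(\tau^{2k})$. Settling these requires computing the actual dimensions $\dim I_{V_L^{\tau}}$ via the Zhu-algebra/bimodule machinery or explicit twisted intertwiners, which is precisely the technical content of \cite{TY2,C,CL}. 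Your proposal therefore imports the decisive step rather than supplying it; it is a correct road map but not a proof. In fairness, the paper treats the statement exactly the same way --- as an external input --- so the honest conclusion is that neither text proves it, and a self-contained proof would have to reproduce the computations of the cited works.
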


\begin{remark} \label{Dual of V_L tau modules} Since $V_{L}^{\tau}$
is self-dual, by Proposition \ref{fusion rule symmmetry property}
, we get
\[
1=N_{V_{L}^{\tau}}\left(_{V_{L^{\left(0,i\right)}}\left[\epsilon\right]\ V_{L}^{\tau}}^{V_{L^{\left(0,i\right)}}\left[\epsilon\right]}\right)=N_{V_{L}^{\tau}}\left(_{V_{L^{\left(0,i\right)}}\left[\epsilon\right]\ \left(V_{L^{\left(0,i\right)}}\left[\epsilon\right]\right)^{'}}^{V_{L}^{\tau}}\right).
\]

By fusion rules of irreducible $V_{L}^{\tau}$-modules in Proposition
\ref{Fusion Product of V_L tau}, we see that $\left(V_{L^{\left(0,i\right)}}\left[\epsilon\right]\right)^{'}=V_{L^{\left(0,2i\right)}}\left[2\epsilon\right]$.
Similarly, we can prove $\left(V_{L^{\left(c,i\right)}}\right)^{'}=V_{L^{\left(c,2i\right)}}$
and $\left(V_{L}^{T,i}\left(\tau^{k}\right)\left[\epsilon\right]\right)^{'}=V_{L}^{T,i}\left(\tau^{2k}\right)\left[\epsilon\right]$.

\end{remark}

By Proposition \ref{Fusion Product of V_L tau}, both $V_{L^{\left(0,1\right)}}\left[0\right]$ and $V_{L^{\left(0,2\right)}}\left[0\right]$ are simple currents as $V_L^{\tau}$-modules.  Thus,
$\mathcal{U}$ is a simple current extension of rational vertex operator algebra $V_{\mathbb{Z}\beta_{1}}\otimes V_{L}^{\tau}.$ It follows from \cite{ABD} that $\mathcal{U}$ is $C_2$-cofinite.
The following lemma is clear from \cite{Y} and \cite{HKL}.

\begin{lemma}\label{rationality} $\mathcal{U}$
is a rational, $C_2$-cofinite  vertex operator algebra. \end{lemma}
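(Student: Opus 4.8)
The plan is to deduce the rationality and $C_2$-cofiniteness of $\mathcal{U}$ from the structure of $\mathcal{U}$ as a simple current extension of the rational, $C_2$-cofinite vertex operator algebra $V_{\mathbb{Z}\beta_1}\otimes V_L^\tau$, invoking the general extension theorems already cited in the excerpt. First I would record that $V_{\mathbb{Z}\beta_1}$ is rational and $C_2$-cofinite (being a rank one lattice vertex operator algebra), and that $V_L^\tau$ is rational and $C_2$-cofinite by Lemma \ref{tau-stable}(3); hence by Theorem \ref{...} on tensor products (stated earlier in the excerpt) the tensor product $V_{\mathbb{Z}\beta_1}\otimes V_L^\tau$ is rational, and $C_2$-cofiniteness of a tensor product follows since $C_2(V^1\otimes V^2)$ contains $C_2(V^1)\otimes V^2 + V^1\otimes C_2(V^2)$, so the quotient is a quotient of $(V^1/C_2(V^1))\otimes(V^2/C_2(V^2))$, which is finite dimensional.

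Next I would make explicit the simple current extension structure. From the decomposition
\[
\mathcal{U}\cong V_{\mathbb{Z}\beta_1}\otimes V_L^\tau \;+\; V_{\frac{1}{3}\beta_1+\mathbb{Z}\beta_1}\otimes V_{L^{(0,1)}}[0]\;+\;V_{\frac{2}{3}\beta_1+\mathbb{Z}\beta_1}\otimes V_{L^{(0,2)}}[0],
\]
the three summands are irreducible $V_{\mathbb{Z}\beta_1}\otimes V_L^\tau$-modules indexed by $\mathbb{Z}_3$. Each is a simple current: $V_{\frac{i}{3}\beta_1+\mathbb{Z}\beta_1}$ is a simple current for $V_{\mathbb{Z}\beta_1}$ by Proposition \ref{Fusion-V_L} (its fusion with any irreducible module shifts the coset, hence has quantum dimension $1$), while $V_{L^{(0,i)}}[0]$ is a simple current for $V_L^\tau$ by Propositions \ref{quantum dimension of V_Ltau}(1) and \ref{possible values of quantum dimensions}(3) (quantum dimension $1$), and a tensor product of simple currents is a simple current for the tensor product algebra by Proposition \ref{possible values of quantum dimensions}(2). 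Thus $\mathcal{U}$ is a $\mathbb{Z}_3$-graded simple current extension of $V_{\mathbb{Z}\beta_1}\otimes V_L^\tau$. Then $C_2$-cofiniteness of $\mathcal{U}$ follows from the result of \cite{ABD} that a $C_2$-cofinite vertex operator algebra has $C_2$-cofinite extensions of this type (as already remarked in the excerpt), and rationality follows from \cite{Y} (or \cite{HKL}): a simple current extension of a rational vertex operator algebra is rational.

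The only genuinely nontrivial point is checking that the hypotheses of the extension theorems of \cite{Y} and \cite{HKL} are met — namely that the summands really do form a simple current extension with the correct $\mathbb{Z}_3$-grading compatible with the vertex operator algebra structure on $\mathcal{U}$, which is guaranteed by \eqref{decomposition of lattice} together with $\sigma|_{V_L}=\tau$. The main obstacle, such as it is, is simply to verify that the three modules appearing are pairwise inequivalent simple currents closed under fusion forming a group isomorphic to $\mathbb{Z}_3$; this is immediate from the additivity of the coset labels in $V_{\mathbb{Z}\beta_1}$ and the fusion rule $V_{L^{(0,i)}}[0]\boxtimes V_{L^{(0,j)}}[0]=V_{L^{(0,i+j)}}[0]$ from Proposition \ref{Fusion Product of V_L tau}(i). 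Once this is in place, the cited theorems apply verbatim and the lemma follows.
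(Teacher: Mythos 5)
Your proposal is correct and follows essentially the same route as the paper: the paper likewise observes that $\mathcal{U}$ is a simple current extension of the rational, $C_2$-cofinite vertex operator algebra $V_{\mathbb{Z}\beta_1}\otimes V_L^{\tau}$ (the simple current property coming from the fusion rules of $V_{L^{(0,i)}}[0]$), deduces $C_2$-cofiniteness from \cite{ABD}, and cites \cite{Y} and \cite{HKL} for rationality. Your write-up simply makes explicit the intermediate verifications (rationality and $C_2$-cofiniteness of the tensor product, the $\mathbb{Z}_3$-grading of the simple currents) that the paper leaves implicit.
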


The $C_2$-cofiniteness and rationality of $\mathcal{U}$ also follow from results on abelian orbifolds \cite{CM, M2}.

Now let $V$ be an arbitrary rational vertex operator algebra We consider the rational vertex operator algebra $V\otimes V\otimes V$
with the natural action of the $3$-cycle $\sigma=\left(1\ 2\ 3\right)$.
The fixed point vertex operator subalgebra is denoted by $\left(V\otimes V\otimes V\right)^{\mathbb{Z}_{3}}.$
Let $M_{i}$, $i=1,\cdots,n$ be all inequivalent irreducible $V$-modules.
For $i,j,k\in\left\{ 1,\cdots,n\right\} $, $M_{i}\otimes M_{j}\otimes M_{k}$
is an irreducible $V\otimes V\otimes V$-module. If $i,j,k$
are not all the same, then $M_{i}\otimes M_{j}\otimes M_{k},$ $M_{j}\otimes M_{k}\otimes M_{i}$,
and $M_{k}\otimes M_{i}\otimes M_{j}$ are isomorphic irreducible
$\left(V\otimes V\otimes V\right)^{\mathbb{Z}_{3}}$-modules
\cite{DM,DY}. The number of such isomorphism classes is $\frac{n^{3}-n}{3}$.

If $i=j=k,$ then $M_{i}\otimes M_{j}\otimes M_{k}+M_{j}\otimes M_{k}\otimes M_{i}+M_{k}\otimes M_{i}\otimes M_{j}$
split into three representations of $\left(V\otimes V\otimes V\right)^{\mathbb{Z}_{3}}$
by \cite{DY}. The number of such isomorphism classes is $3n$.

For $t=1,2$, since there is one-to-one correspondence between the
category of $\sigma^{t}$-twisted $V\otimes V\otimes V$-modules
and the category of $V$-modules \cite{BDM}, the number of isomorphism
classes of irreducible $\sigma^{t}$-twisted $V\otimes V\otimes V$-modules
is also $n$. From \cite{DY}, each $\sigma^{t}$-twisted module can be
decomposed into a direct sum of three irreducible $\left(V\otimes V\otimes V\right)^{\mathbb{Z}_{3}}$-modules.
The number of such isomorphism classes of irreducible $\left(V\otimes V\otimes V\right)^{\mathbb{Z}_{3}}$-modules
$6n.$

In total, we have obtained $\frac{n^{3}+26n}{3}$ inequivalent $\left(V\otimes V\otimes V\right)^{\mathbb{Z}_{3}}$-modules.

In particular, for the case $V=V_{\mathbb{Z}\alpha}$ with $\left\langle \alpha,\alpha\right\rangle =2$,
we have $n=2$. The following result is immediate from Lemma \ref{rationality} and \cite{DRX}:

\begin{proposition}
There are exactly 20 inequivalent irreducible $\mathcal{U}$-modules.
\end{proposition}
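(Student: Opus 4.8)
The plan is to pin down the number of irreducible $\mathcal{U}$-modules by squeezing it between a lower bound coming from the explicit construction carried out in the paragraphs above and an upper bound coming from the completeness theorem of \cite{DRX}. For the lower bound, recall that for an arbitrary rational vertex operator algebra $V$ with exactly $n$ inequivalent irreducible modules the discussion preceding the statement exhibits $\frac{n^{3}+26n}{3}$ pairwise inequivalent irreducible $\left(V\otimes V\otimes V\right)^{\Z_{3}}$-modules: the $\frac{n^{3}-n}{3}+3n$ modules obtained from the tensor products $M_{i}\otimes M_{j}\otimes M_{k}$ in the untwisted sector, together with $6n$ modules obtained by decomposing, for $t=1,2$, the $n$ irreducible $\sigma^{t}$-twisted $V^{\otimes 3}$-modules into three $\left(V\otimes V\otimes V\right)^{\Z_{3}}$-constituents each. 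Taking $V=V_{\Z\alpha}$ with $\langle\alpha,\alpha\rangle=2$, so that $n=2$, this yields $\frac{8+52}{3}=20$ inequivalent irreducible $\mathcal{U}$-modules.

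For the matching upper bound I would apply \cite{DRX}. The vertex operator algebra $V_{\Z\alpha}^{\otimes 3}$ is rational and $C_{2}$-cofinite (being a tensor power of a lattice vertex operator algebra), and $\mathcal{U}=\left(V_{\Z\alpha}^{\otimes 3}\right)^{\langle\sigma\rangle}$ is rational and $C_{2}$-cofinite by Lemma \ref{rationality}; hence every irreducible $\mathcal{U}$-module occurs as a $\mathcal{U}$-submodule of some irreducible $\sigma^{t}$-twisted $V_{\Z\alpha}^{\otimes 3}$-module with $t\in\{0,1,2\}$. Since the irreducible $\mathcal{U}$-constituents of all of these twisted modules are precisely the $20$ modules listed in the previous paragraph, there are at most $20$ irreducible $\mathcal{U}$-modules, and combining the two estimates gives exactly $20$.

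I expect the only point requiring genuine (though routine) care to be the verification that the $20$ constructed modules are pairwise inequivalent: untwisted-sector modules are ordinary (untwisted) $\mathcal{U}$-modules whereas the twisted-sector ones are not; $\sigma$-twisted and $\sigma^{2}$-twisted modules cannot be identified because a $\sigma$-twisted $V_{\Z\alpha}^{\otimes 3}$-module is not $\sigma^{2}$-twisted; distinct multisets $\{i,j,k\}$ give non-isomorphic $M_{i}\otimes M_{j}\otimes M_{k}$; and the three $\mathcal{U}$-constituents of a single irreducible twisted module, or of $M_{i}^{\otimes 3}$ in the untwisted case, are separated by the eigenvalue of the residual $\Z_{3}$-action as in \cite{DY}. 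Everything else is immediate from Lemma \ref{rationality} and \cite{DRX}, as the statement indicates.
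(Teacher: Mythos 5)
Your proposal is correct and follows essentially the same route as the paper: the paper's proof is precisely the count $\frac{n^{3}+26n}{3}=20$ for $n=2$ from the preceding discussion of untwisted and $\sigma^{t}$-twisted sectors, combined with the completeness statement from \cite{DRX} (applicable by Lemma \ref{rationality}) that every irreducible $\mathcal{U}$-module appears in some twisted module. Your additional remarks on pairwise inequivalence only make explicit what the paper leaves implicit via \cite{DM,DY}.
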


In the result of this section, we give explicit realization of irreducible  $\mathcal{U}$-modules in terms of  irreducible $V_{\Z\beta_1}\otimes V_L^{\tau}$-modules.

\begin{proposition} \label{all modules}
An irreducible $\mathcal{U}$-module has the following form
as an $V_{\mathbb{Z}\beta_{1}}\otimes V_{L}^{\tau}$-module: For $i\in\mathbb{Z}_{2}$,
$\epsilon\in\mathbb{Z}_{3}$, $k=1,2$,

\[
M^{i}=V_{\frac{i}{2}\beta_{1}+\mathbb{Z}\beta_{1}}\otimes V_{L^{\left(c,0\right)}}+V_{\frac{3i+2}{6}\beta_{1}+\mathbb{Z}\beta_{1}}\otimes V_{L^{\left(c,1\right)}}+V_{\frac{3i+4}{6}\beta_{1}+\mathbb{Z}\beta_{1}}\otimes V_{L^{\left(c,2\right)}}
\]

\[
\widetilde{M}_{i}\left[\epsilon\right]=V_{\frac{i}{2}\beta_{1}+\mathbb{Z}\beta_{1}}\otimes V_{L^{\left(0,0\right)}}[\epsilon]+V_{\frac{3i+2}{6}\beta_{1}+\mathbb{Z}\beta_{1}}\otimes V_{L^{\left(0,1\right)}}[\epsilon]+V_{\frac{3i+4}{6}\beta_{1}+\mathbb{Z}\beta_{1}}\otimes V_{L^{\left(0,2\right)}}[\epsilon]
\]

\[
\widehat{M}_{\tau^{k},i}\left[\epsilon\right]=V_{\frac{i}{2}\beta_{1}+\mathbb{Z}\beta_{1}}\otimes V_{L}^{T,0}\left(\tau^{k}\right)\left[\epsilon\right]+V_{\frac{3i+2}{6}\beta_{1}+\mathbb{Z}\beta_{1}}\otimes V_{L}^{T,2k}\left(\tau^{k}\right)\left[\epsilon\right]+V_{\frac{3i+4}{6}\beta_{1}+\mathbb{Z}\beta_{1}}\otimes V_{L}^{T,k}\left(\tau^{k}\right)\left[\epsilon\right]
\]

where $\beta_{1}$ can be identified with $\sqrt{3}\alpha$.

\end{proposition}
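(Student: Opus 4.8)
Write $A:=V_{\Z\beta_{1}}\otimes V_{L}^{\tau}$. The structural input, already isolated in the paragraphs before the statement, is that $\mathcal{U}$ is a $\Z_{3}$-graded simple current extension
\[
\mathcal{U}\;\cong\;A\;\oplus\;J\;\oplus\;J^{\boxtimes 2},\qquad J=V_{\frac{1}{3}\beta_{1}+\Z\beta_{1}}\otimes V_{L^{(0,1)}}[0],
\]
with $J^{\boxtimes 2}=V_{\frac{2}{3}\beta_{1}+\Z\beta_{1}}\otimes V_{L^{(0,2)}}[0]$ and $J^{\boxtimes 3}=A$, the fusion being computed from Propositions \ref{Fusion-V_L} and \ref{Fusion Product of V_L tau}(i). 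Since fusion with $J$ shifts the $\Z\beta_{1}$-coset label by $\frac13\beta_{1}$, an element of order three in $(\Z\beta_{1})^{\circ}/\Z\beta_{1}$, the group $\<J\>$ acts freely on the set of irreducible $A$-modules, so every orbit has exactly three elements. By the theory of simple current extensions \cite{Y,HKL}, an irreducible $A$-module $N$ induces to a genuine (untwisted) $\mathcal{U}$-module exactly when $h_{J\boxtimes N}-h_{J}-h_{N}\in\Z$ (vanishing of the monodromy charge of $N$ against $J$) -- equivalently, since $h_{J}\in\Z$, when $N$, $J\boxtimes N$, $J^{\boxtimes 2}\boxtimes N$ all have conformal weight in one coset of $\Z$ -- and then $\mathrm{Ind}(N)=N\oplus(J\boxtimes N)\oplus(J^{\boxtimes 2}\boxtimes N)$ is irreducible; moreover $N\mapsto\mathrm{Ind}(N)$ gives a bijection from $\<J\>$-orbits of such $N$ onto the set of irreducible $\mathcal{U}$-modules. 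The problem thus reduces to listing these orbits.

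Every irreducible $A$-module is $V_{\frac{s}{6}\beta_{1}+\Z\beta_{1}}\otimes W$ with $s\in\Z_{6}$ and $W$ among the thirty irreducible $V_{L}^{\tau}$-modules of Proposition \ref{rationality of V_Ltau}, and $J$ carries it to $V_{\frac{s+2}{6}\beta_{1}+\Z\beta_{1}}\otimes\bigl(V_{L^{(0,1)}}[0]\boxtimes W\bigr)$, the second factor being evaluated from Proposition \ref{Fusion Product of V_L tau}(i),(ii),(iv). Using $h\bigl(V_{\frac{s}{6}\beta_{1}+\Z\beta_{1}}\bigr)\equiv\frac{s^{2}}{12}\pmod{\Z}$, the weight formulas of Proposition \ref{rationality of V_Ltau}, and the conformal weights of the $V_{L^{(c,j)}}$ from \cite{TY1}, a short computation yields the monodromy charge of $V_{\frac{s}{6}\beta_{1}+\Z\beta_{1}}\otimes W$ against $J$: it equals $-\frac{s}{3}-\frac{j}{3}\pmod{\Z}$ when $W=V_{L^{(0,j)}}[\epsilon]$ or $V_{L^{(c,j)}}$, and $-\frac{s}{3}-\frac{2jk}{3}\pmod{\Z}$ when $W=V_{L}^{T,j}(\tau^{k})[\epsilon]$. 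Requiring this to vanish determines $s\bmod 3$ from $j$ (and $k$), leaving $60$ modules; tracing the $\<J\>$-action through the fusion rules partitions them into $20$ orbits, which under $\beta_{1}=\sqrt{3}\alpha$ are exactly the families $M^{i}$, $\widetilde{M}_{i}[\epsilon]$, $\widehat{M}_{\tau^{k},i}[\epsilon]$ of the statement; here $i\in\Z_{2}$ records the (constant) parity of the label $s$ along the orbit.

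For an independent check and to make the modules concrete, I would match the three families against the three sources of irreducible $\mathcal{U}$-modules recalled before the statement: the $\widetilde{M}_{i}[\epsilon]$ are the \cite{DY}-splittings of the $\sigma$-stable untwisted modules $V_{\Z\alpha}^{\otimes 3}$ and $V_{\frac{\alpha}{2}+\Z\alpha}^{\otimes 3}$, the $M^{i}$ are the two untwisted modules $M_{a}\otimes M_{b}\otimes M_{c}$ with $a,b,c$ not all equal (which stay irreducible over $\mathcal{U}$, their non-$\tau$-stable $V_{L}$-factor restricting to $V_{L^{(c,j)}}$), and the $\widehat{M}_{\tau^{k},i}[\epsilon]$ are the \cite{DY}-splittings of the two irreducible $\sigma^{k}$-twisted $V_{\Z\alpha}^{\otimes 3}$-modules of \cite{BDM}, $k=1,2$; each restriction to $A$ is read off from the index-three inclusion $\Z\beta_{1}\oplus L\subset\mathcal{L}$ in \eqref{decomposition of lattice} together with the description of (twisted) $V_{L}$-modules and their $\tau$-eigenspaces in \cite{TY1,TY2}. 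I expect the main obstacle to be precisely this last bookkeeping in the twisted sectors: one must pin down how a given irreducible $\sigma^{k}$-twisted $V_{\Z\alpha}^{\otimes 3}$-module decomposes over $V_{\Z\beta_{1}}\otimes V_{L}$ into summands $V_{\mu+\Z\beta_{1}}\otimes V_{L}^{T,j}(\tau^{k})$, which coset $\mu$ pairs with which twist label $j$ -- the source of the shifts $j\mapsto 2k$ and $j\mapsto k$ -- and then how the further restriction to $V_{L}^{\tau}$ distributes the $\tau$-eigenspaces $[\epsilon]$. Since the twenty orbits just described have pairwise distinct multisets of $A$-composition factors they are inequivalent, and by the preceding proposition there are exactly twenty irreducible $\mathcal{U}$-modules, so the list is complete.
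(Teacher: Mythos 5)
Your argument is correct, and it reaches the classification by a more uniform route than the paper's. The paper splits the work in two: the untwisted modules $M^{i}$ and $\widetilde{M}_{i}\left[\epsilon\right]$ are obtained by explicitly pushing the $\left\langle \sigma\right\rangle $-orbits of irreducible $V_{\mathbb{Z}\alpha}^{\otimes3}$-modules through the index-three coset decomposition \eqref{decomposition of lattice} (using \cite{DY} for the splitting of the diagonal ones), and only the $\widehat{M}_{\tau^{k},i}\left[\epsilon\right]$ are produced by inducing $\mathcal{U}\cdot W$ from irreducible $V_{\mathbb{Z}\beta_{1}}\otimes V_{L}^{\tau}$-modules subject to integrality of weights --- which is exactly your monodromy-charge criterion. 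You instead run the simple-current induction of \cite{Y,HKL} over all $180$ irreducible $A$-modules at once; your charge formulas check against Propositions \ref{Fusion-V_L}, \ref{rationality of V_Ltau} and \ref{Fusion Product of V_L tau} (for instance, for $V_{\frac{s}{6}\beta_{1}+\mathbb{Z}\beta_{1}}\otimes V_{L}^{T,j}\left(\tau^{k}\right)\left[\epsilon\right]$ the charge is $\frac{s+2jk}{3}\bmod\mathbb{Z}$, and one gets $18+6+36=60$ charge-zero modules in $20$ free orbits of the right sizes $6+2+12$). What this buys is economy: you need neither \cite{BDM} nor \cite{DY} for this proposition, and completeness comes from the bijection between charge-zero orbits and irreducible $\mathcal{U}$-modules rather than from matching the count of $20$ (which you still invoke as a safety net). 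What it costs: (a) the conformal weights of $V_{L^{\left(c,j\right)}}$ modulo $\mathbb{Z}$ are not recorded in the paper and must genuinely be imported from \cite{TY1} to evaluate the charge in the $\left(c,j\right)$ family; and (b) the identification of $M^{i}$ with the non-diagonal permutation modules $\left(i+1,i+1,i\right)$ and of the $\widehat{M}_{\tau^{k},i}\left[\epsilon\right]$ with the $\sigma^{k}$-twisted sectors --- which the paper gets as a byproduct and then uses in proving \eqref{Fusion-nondiag-diag} --- remains in your write-up only a sketched consistency check, so it would have to be completed before the later sections can lean on it. Neither point is a gap in the proof of the proposition as stated.
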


\begin{proof}

For $i,j,k\in\mathbb{Z}_{2}$, if $i,j$ and $k$ are not all the
same, $\left(i,\  j, \ k\right)=V_{\frac{i}{2}\alpha+\mathbb{Z}\alpha}\otimes V_{\frac{j}{2}\alpha+\mathbb{Z}\alpha}\otimes V_{\frac{k}{2}\alpha+\mathbb{Z}\alpha}+V_{\frac{j}{2}\alpha+\mathbb{Z}\alpha}\otimes V_{\frac{k}{2}\alpha+\mathbb{Z}\alpha}\otimes V_{\frac{i}{2}\alpha+\mathbb{Z}\alpha}+V_{\frac{k}{2}\alpha+\mathbb{Z}\alpha}\otimes V_{\frac{i}{2}\alpha+\mathbb{Z}\alpha}\otimes V_{\frac{j}{2}\alpha+\mathbb{Z}\alpha}$
carries a representation of the $\mathbb{Z}_{3}$ permutation symmetry,
which can be decomposed into three submodules of the orbifold vertex
operator algebra $\left(V_{\mathbb{Z}\alpha}\otimes V_{\mathbb{Z}\alpha}\otimes V_{\mathbb{Z}\alpha}\right)^{\mathbb{Z}_{3}}.$
These three modules are isomorphic as irreducible $\left(V_{\mathbb{Z}\alpha}\otimes V_{\mathbb{Z}\alpha}\otimes V_{\mathbb{Z}\alpha}\right)^{\mathbb{Z}_{3}}$-modules.
We keep only one of these modules. Since
\begin{align*}
 & \left(\frac{i}{2}\alpha^{1}+\mathbb{Z}\alpha^{1}\right)+\left(\frac{j}{2}\alpha^{2}+\mathbb{Z}\alpha^{2}\right)+\left(\frac{k}{2}\alpha^{3}+\mathbb{Z}\alpha^{3}\right)\\
= & \frac{i}{2}\cdot\frac{\beta_{1}+2\beta_{2}+\beta_{3}}{3}+\frac{j}{2}\cdot\frac{\beta_{1}-\beta_{2}+\beta_{3}}{3}+\frac{k}{2}\cdot\frac{\beta_{1}-\beta_{2}-2\beta_{3}}{3}\\
 & +\left(\mathbb{Z}\beta_{1}\oplus L\right)\cup\left(\left(\frac{1}{3}\beta_{1}+\mathbb{Z}\beta_{1}\right)\oplus\left(\frac{2\beta_{2}+\beta_{3}}{3}+L\right)\right)\\
 & +\left(\left(\frac{2}{3}\beta_{1}+\mathbb{Z}\beta_{1}\right)\cup\left(\frac{\beta_{2}+2\beta_{3}}{3}+L\right)\right)\\
= & \left(\left(\frac{i+j+k}{6}\beta_{1}+\mathbb{Z}\beta_{1}\right)\oplus\left(\frac{\left(2i-j-k\right)\beta_{2}+\left(i+j-2k\right)\beta_{3}}{6}+L\right)\right)\\
 & \cup\left(\left(\frac{i+j+k+2}{6}\beta_{1}+\mathbb{Z}\beta_{1}\right)\oplus\left(\frac{\left(2i-j-k+4\right)\beta_{2}+\left(i+j-2k+2\right)\beta_{3}}{6}+L\right)\right)\\
 & \cup\left(\left(\frac{i+j+k+4}{6}\beta_{1}+\mathbb{Z}\beta_{1}\right)\oplus\left(\frac{\left(2i-j-k+2\right)\beta_{2}+\left(i+j-2k+4\right)\beta_{3}}{6}+L\right)\right),
\end{align*}

we obtain
\begin{align*}
\left(i,\ j,\ k\right) & \cong V_{\frac{i+j+k}{6}\beta_{1}+\mathbb{Z}\beta_{1}}\otimes V_{\frac{\left(2i-j-k\right)\beta_{2}+\left(i+j-2k\right)\beta_{3}}{6}+L}\\
 & +V_{\frac{i+j+k+2}{6}\beta_{1}+\mathbb{Z}\beta_{1}}\otimes V_{\frac{\left(2i-j-k+4\right)\beta_{2}+\left(i+j-2k+2\right)\beta_{3}}{6}+L}\\
 & +V_{\frac{i+j+k+4}{6}\beta_{1}+\mathbb{Z}\beta_{1}}\otimes V_{\frac{\left(2i-j-k+2\right)\beta_{2}+\left(i+j-2k+4\right)\beta_{3}}{6}+L}.
\end{align*}
The number of such modules is $\frac{2^{3}-2}{3}=2:$
\begin{align*}
 & \left(0,\ 1,\ 1\right)\cong\left(1,\ 1,\ 0\right)\cong\left(1,\ 0,\ 1\right)\\
\cong & V_{\mathbb{Z}\beta_{1}}\otimes V_{L^{\left(c,0\right)}}+V_{\frac{1}{3}\beta_{1}+\mathbb{Z}\beta_{1}}\otimes V_{L^{\left(c,1\right)}}+V_{\frac{2}{3}\beta_{1}+\mathbb{Z}\beta_{1}}\otimes V_{L^{\left(c,2\right)}}
\end{align*}
and
\begin{align*}
 & \left(1,\ 0,\ 0\right)\cong\left(0,\ 1,\ 0\right)\cong\left(0,\ 0,\ 1\right)\\
\cong & V_{\frac{1}{2}\beta_{1}+\mathbb{Z}\beta_{1}}\otimes V_{L^{\left(c,0\right)}}+V_{\frac{5}{6}\beta_{1}+\mathbb{Z}\beta_{1}}\otimes V_{L^{\left(c,1\right)}}+V_{\frac{1}{6}\beta_{1}+\mathbb{Z}\beta_{1}}\otimes V_{L^{\left(c,2\right).}}
\end{align*}

For simplicity, we denote them by

\[
M^{i}=V_{\frac{i}{2}\beta_{1}+\mathbb{Z}\beta_{1}}\otimes V_{L^{\left(c,0\right)}}+V_{\frac{3i+2}{6}\beta_{1}+\mathbb{Z}\beta_{1}}\otimes V_{L^{\left(c,1\right)}}+V_{\frac{3i+4}{6}\beta_{1}+\mathbb{Z}\beta_{1}}\otimes V_{L^{\left(c,2\right)}},i\in\mathbb{Z}_{2}.
\]

If $i=j=k$, we have

\begin{align*}
\left(i,\ i,\ i\right) & \cong V_{\frac{i}{2}\beta_{1}+\mathbb{Z}\beta_{1}}\otimes V_{L}+V_{\frac{3i+2}{6}\beta_{1}+\mathbb{Z}\beta_{1}}\otimes V_{\frac{2\beta_{2}+\beta_{3}}{3}+L}+V_{\frac{3i+4}{6}\beta_{1}+\mathbb{Z}\beta_{1}}\otimes V_{\frac{\beta_{2}+2\beta_{3}}{3}+L}\\
 & =V_{\frac{i}{2}\beta_{1}+\mathbb{Z}\beta_{1}}\otimes V_{L^{\left(0,0\right)}}+V_{\frac{3i+2}{6}\beta_{1}+\mathbb{Z}\beta_{1}}\otimes V_{L^{\left(0,1\right)}}+V_{\frac{3i+4}{6}\beta_{1}+\mathbb{Z}\beta_{1}}\otimes V_{L^{\left(0,2\right)}}\\
 & =V_{\frac{i}{2}\beta_{1}+\mathbb{Z}\beta_{1}}\otimes\left(V_{L^{\left(0,0\right)}}[0]+V_{L^{\left(0,0\right)}}[1]+V_{L^{\left(0,0\right)}}[2]\right)\\
 & +V_{\frac{3i+2}{6}\beta_{1}+\mathbb{Z}\beta_{1}}\otimes\left(V_{L^{\left(0,1\right)}}[0]+V_{L^{\left(0,1\right)}}[1]+V_{L^{\left(0,1\right)}}[2]\right)\\
 & +V_{\frac{3i+4}{6}\beta_{1}+\mathbb{Z}\beta_{1}}\otimes\left(V_{L^{\left(0,2\right)}}[0]+V_{L^{\left(0,2\right)}}[1]+V_{L^{\left(0,2\right)}}[2]\right)\\
 & =V_{\frac{i}{2}\beta_{1}+\mathbb{Z}\beta_{1}}\otimes V_{L^{\left(0,0\right)}}[0]+V_{\frac{3i+2}{6}\beta_{1}+\mathbb{Z}\beta_{1}}\otimes V_{L^{\left(0,1\right)}}[0]+V_{\frac{3i+4}{6}\beta_{1}+\mathbb{Z}\beta_{1}}\otimes V_{L^{\left(0,2\right)}}[0]\\
 & +V_{\frac{i}{2}\beta_{1}+\mathbb{Z}\beta_{1}}\otimes V_{L^{\left(0,0\right)}}[1]+V_{\frac{3i+2}{6}\beta_{1}+\mathbb{Z}\beta_{1}}\otimes V_{L^{\left(0,1\right)}}[1]+V_{\frac{3i+4}{6}\beta_{1}+\mathbb{Z}\beta_{1}}\otimes V_{L^{\left(0,2\right)}}[1]\\
 & +V_{\frac{i}{2}\beta_{1}+\mathbb{Z}\beta_{1}}\otimes V_{L^{\left(0,0\right)}}[2]+V_{\frac{3i+2}{6}\beta_{1}+\mathbb{Z}\beta_{1}}\otimes V_{L^{\left(0,1\right)}}[2]+V_{\frac{3i+4}{6}\beta_{1}+\mathbb{Z}\beta_{1}}\otimes V_{L^{\left(0,2\right)}}[2].
\end{align*}

Set
\[
\widetilde{M}_{i}\left[\epsilon\right]=V_{\frac{i}{2}\beta_{1}+\mathbb{Z}\beta_{1}}\otimes V_{L^{\left(0,0\right)}}[\epsilon]+V_{\frac{3i+2}{6}\beta_{1}+\mathbb{Z}\beta_{1}}\otimes V_{L^{\left(0,1\right)}}[\epsilon]+V_{\frac{3i+4}{6}\beta_{1}+\mathbb{Z}\beta_{1}}\otimes V_{L^{\left(0,2\right)}}[\epsilon],
\]

then $\widetilde{M}_{i}\left[\epsilon\right]$, $i\in\mathbb{Z}_{2}$
and $\epsilon\in\mathbb{Z}_{3}$ are irreducible modules of $\left(V_{\mathbb{Z}\alpha}\otimes V_{\mathbb{Z}\alpha}\otimes V_{\mathbb{Z}\alpha}\right)^{\mathbb{Z}_{3}}$.
The number of such irreducible modules is $6$.

Note that $V_{\mathbb{Z}\beta_{1}}\otimes V_{L}$ is a subalgebra
of $V_{\Z\alpha}\otimes V_{\Z\alpha}\otimes V_{\Z\alpha}$. So any irreducible $\sigma^{k}$-twisted
$V_{\Z\alpha}\otimes V_{\Z\alpha}\otimes V_{\Z\alpha}$-module contains an irreducible
$1\otimes\tau^{k}$-twisted $V_{\mathbb{Z}\beta_{1}}\otimes V_{L}$-module
$V_{\frac{i}{6}+\mathbb{Z}\beta_{1}}\otimes V_{L}^{T,j}\left(\tau^{k}\right)$
for some $i\in\mathbb{Z}_{6}$, $j\in\mathbb{Z}_{3}$ and $k\in\left\{ 1,2\right\}.$ Moreover, $V_{\frac{i}{6}+\mathbb{Z}\beta_{1}}\otimes V_{L}^{T,j}\left(\tau^{k}\right)$
is a direct sum of three irreducible $V_{\mathbb{Z}\beta_{1}}\otimes V_{L}^{\tau}$-modules
$V_{\frac{i}{6}+\mathbb{Z}\beta_{1}}\otimes V_{L}^{T,j}\left(\tau^{k}\right)$$\left[\epsilon\right],$
$\epsilon\in\mathbb{Z}_{3}$. Since $V_{\mathbb{Z}\beta_{1}}\otimes V_{L}^{\tau}$
is a rational vertex operator subalgebra of $\mathcal{U}$, each irreducible
$\mathcal{U}$-module $M$ is a direct sum of irreducible $V_{\mathbb{Z}\beta_{1}}\otimes V_{L}^{\tau}$-modules.
Let $W$ be an irreducible $V_{\mathbb{Z}\beta_{1}}\otimes V_{L}^{\tau}$-module,
we define $\mathcal{U}\cdot W$ to be the fusion product of $\mathcal{U}$
and $W$ as $V_{\mathbb{Z}\beta_{1}}\otimes V_{L}^{\tau}$-modules.
That is,

\begin{align*}
\mathcal{U}\cdot W & =\left(V_{\mathbb{Z}\beta_{1}}\otimes V_{L}^{\tau}\right)\boxtimes_{V_{\mathbb{Z}\beta_{1}}\otimes V_{L}^{\tau}}W\\
 & \oplus\left(V_{\frac{1}{3}\beta_{1}+\mathbb{Z}\beta_{1}}\otimes V_{L^{\left(0,1\right)}}\left[0\right]\right)\boxtimes_{V_{\mathbb{Z}\beta_{1}}\otimes V_{L}^{\tau}}W\\
 & \oplus\left(V_{\frac{2}{3}\beta_{1}+\mathbb{Z}\beta_{1}}\otimes V_{L^{\left(0,2\right)}}\left[0\right]\right)\boxtimes_{V_{\mathbb{Z}\beta_{1}}\otimes V_{L}^{\tau}}W
\end{align*}
Since both $V_{\frac{1}{3}\beta_{1}+\mathbb{Z}\beta_{1}}\otimes V_{L^{\left(0,1\right)}}\left[0\right]$
and $V_{\frac{2}{3}\beta_{1}+\mathbb{Z}\beta_{1}}\otimes V_{L^{\left(0,2\right)}}\left[0\right]$
are simple current $V_{\mathbb{Z}\beta_{1}}\otimes V_{L}^{\tau}$-modules,
$\mathcal{U}\cdot W$ is a $\mathcal{U}$-module if and only if the
weights of $\mathcal{U}\cdot W$ lies in $\mathbb{Z}+r$ for some
$r\in\mathbb{C}$. From the discussion above, any irreducible $\mathcal{U}$-modules
from the $\sigma^{k}$-twisted $V_{\Z\alpha}\otimes V_{\Z\alpha}\otimes V_{\Z\alpha}$-modules
have the form $\mathcal{U}\cdot W$ where $W=V_{\frac{i}{6}\beta_{1}+\mathbb{Z}\beta_{1}}\otimes V_{L}^{T,j}\left(\tau^{k}\right)\left[\epsilon\right]$
for some $i\in\mathbb{Z}_{6}$, $j,\epsilon\in\mathbb{Z}_{3}$ and
$k\in\left\{ 1,2\right\} $.

Let $W=V_{\frac{i}{6}\beta_{1}+\mathbb{Z}\beta_{1}}\otimes V_{L}^{T,j}\left(\tau^{k}\right)\left[\epsilon\right]$
for some $i\in\mathbb{Z}_{6}$, $j,\epsilon\in\mathbb{Z}_{3}$ and
$k\in\left\{ 1,2\right\} $. Then by fusion rules of irreducible $V_{\mathbb{Z}\beta_{1}}$-
and $V_{L}^{\tau}$-modules \cite{CL,TY2}

\begin{align*}
\mathcal{U}\cdot W & =V_{\frac{i}{6}\beta_{1}+\mathbb{Z}\beta_{1}}\otimes V_{L}^{T,j}\left(\tau^{k}\right)\left[\epsilon\right]\\
 & \oplus V_{\frac{2+i}{6}\beta_{1}+\mathbb{Z}\beta_{1}}\otimes V_{L}^{T,j-k}\left(\tau^{k}\right)\left[\epsilon\right]\\
 & \oplus V_{\frac{4+i}{6}\beta_{1}+\mathbb{Z}\beta_{1}}\otimes V_{L}^{T,j-2k}\left(\tau^{k}\right)\left[\epsilon\right]
\end{align*}

The lowest weight of irreducible $V_{\mathbb{Z}\beta_{1}}$-module
$V_{\lambda+\mathbb{Z}\beta_{1}}$ is $\frac{\left\langle \lambda,\lambda\right\rangle }{2}$,
and the lowest weight of irreducible $V_{L}^{\tau}$-modules of the
form $V_{L}^{T,j}\left(\tau^{k}\right)\left(\epsilon\right)$ is $\frac{10-3\left(j^{2}+\epsilon\right)}{9}$
\cite{TY1}. Thus we see that

(i) If $k=1$, $\mathcal{U}\cdot W$ is a $\mathcal{U}$-module only
if $i\in3\mathbb{Z}$ and $i+2j\in3\mathbb{Z}$. So $\left(i,j\right)=\left(0,0\right)$
or $\left(3,0\right)$. Thus we get two irreducible modules

\[
V_{\mathbb{Z}\beta_{1}}\otimes V_{L}^{T,0}\left(\tau\right)\left[\epsilon\right]+V_{\frac{1}{3}\beta_{1}+\mathbb{Z}\beta_{1}}\otimes V_{L}^{T,2}\left(\tau\right)\left[\epsilon\right]+V_{\frac{2}{3}\beta_{1}+\mathbb{Z}\beta_{1}}\otimes V_{L}^{T,1}\left(\tau\right)\left[\epsilon\right]
\]

and
\[
V_{\frac{1}{2}\beta_{1}+\mathbb{Z}\beta_{1}}\otimes V_{L}^{T,0}\left(\tau\right)\left[\epsilon\right]+V_{\frac{5}{6}\beta_{1}+\mathbb{Z}\beta_{1}}\otimes V_{L}^{T,2}\left(\tau\right)\left[\epsilon\right]+V_{\frac{1}{6}\beta_{1}+\mathbb{Z}\beta_{1}}\otimes V_{L}^{T,1}\left(\tau\right)\left[\epsilon\right].
\]

(ii) If $k=2$, $\mathcal{U}\cdot W$ is a $\mathcal{U}$-module only
if $i\in3\mathbb{Z}-1$ and $i+j\in3\mathbb{Z}$. So $\left(i,j\right)=\left(2,1\right)$
or $\left(5,1\right)$. So we get two irreducible modules

\[
V_{\mathbb{Z}\beta_{1}}\otimes V_{L}^{T,0}\left(\tau^{2}\right)\left[\epsilon\right]+V_{\frac{1}{3}\beta_{1}+\mathbb{Z}\beta_{1}}\otimes V_{L}^{T,1}\left(\tau^{2}\right)\left[\epsilon\right]+V_{\frac{2}{3}\beta_{1}+\mathbb{Z}\beta_{1}}\otimes V_{L}^{T,2}\left(\tau^{2}\right)\left[\epsilon\right]
\]

and
\[
V_{\frac{1}{2}\beta_{1}+\mathbb{Z}\beta_{1}}\otimes V_{L}^{T,1}\left(\tau^{2}\right)\left[\epsilon\right]+V_{\frac{5}{6}\beta_{1}+\mathbb{Z}\beta_{1}}\otimes V_{L}^{T,1}\left(\tau^{2}\right)\left[\epsilon\right]+V_{\frac{1}{6}\beta_{1}+\mathbb{Z}\beta_{1}}\otimes V_{L}^{T,2}\left(\tau^{2}\right)\left[\epsilon\right].
\]

Denote
\[
\widehat{M}_{\tau^{k},i}\left[\epsilon\right]=V_{\frac{i}{2}\beta_{1}+\mathbb{Z}\beta_{1}}\otimes V_{L}^{T,0}\left(\tau^{k}\right)\left[\epsilon\right]+V_{\frac{3i+2}{6}\beta_{1}+\mathbb{Z}\beta_{1}}\otimes V_{L}^{T,2k}\left(\tau^{k}\right)\left[\epsilon\right]+V_{\frac{3i+4}{6}\beta_{1}+\mathbb{Z}\beta_{1}}\otimes V_{L}^{T,k}\left(\tau^{k}\right)\left[\epsilon\right].
\]
 Then the four modules obtained in (i) and (ii) are $\widehat{M}_{\tau,0}\left[\epsilon\right]$,
$\widehat{M}_{\tau,1}\left[\epsilon\right]$, $\widehat{M}_{\tau^{2},0}$$\left[\epsilon\right]$
and $\widehat{M}_{\tau^{2},1}$$\left[\epsilon\right]$ respectively.
The number of irreducible $\mathcal{U}$-modules of the form $\hat{M}_{\tau^{k},i}\left[\epsilon\right]$
with $k=1,2$, $i\in\mathbb{Z}_{2}$, $\epsilon\in\mathbb{Z}_{3}$
is 12.

Now we obtain 20 irreducible $\mathcal{U}$-modules. Thus they are
all the inequivalent irreducible $\mathcal{U}$-modules.

\end{proof}

\begin{remark} It is easy to see that as irreducible $V_{\mathbb{Z}\beta_{1}}$-modules,
$\left(V_{\lambda+\mathbb{Z}\beta_{1}}\right)^{'}=V_{-\lambda+\mathbb{Z}\beta_{1}}$.
By Remark \ref{Dual of V_L tau modules}, we have
\[
\left(M^{i}\right)^{'}=M^{i},\ \left(\widetilde{M}_{i}\left[\epsilon\right]\right)^{'}=\widetilde{M}_{i}\left[2\epsilon\right],\ \mbox{and\ }\left(\widehat{M}_{\tau^{k},i}\left[\epsilon\right]\right)^{'}=\widehat{M}_{\tau^{2k},i}\left[\epsilon\right].
\]

\end{remark}

\section{{\normalsize{}Quantum Dimensions and Fusion Rules}}

In this section, we first give quantum dimensions of all irreducible
$\left(V_{\mathbb{Z}\alpha}\otimes V_{\mathbb{Z}\alpha}\otimes V_{\mathbb{Z}\alpha}\right)^{\mathbb{Z}_{3}}$-modules.
Then we find all fusion products among irreducible $\left(V_{\mathbb{Z}\alpha}\otimes V_{\mathbb{Z}\alpha}\otimes V_{\mathbb{Z}\alpha}\right)^{\mathbb{Z}_{3}}$-modules.

\subsection{{\normalsize{}Quantum Dimensions }}

By Proposition \ref{rationality} and \cite{TY1} we see that the
vertex operator algebra $\left(V_{\mathbb{Z}\alpha}\otimes V_{\mathbb{Z}\alpha}\otimes V_{\mathbb{Z}\alpha}\right)^{\mathbb{Z}_{3}}$
satisfies all the assumptions in the properties of quantum dimensions
in Proposition \ref{possible values of quantum dimensions}. Thus we can use these properties freely.

Quantum dimensions of all irreducible $\left(V_{\mathbb{Z}\alpha}\otimes V_{\mathbb{Z}\alpha}\otimes V_{\mathbb{Z}\alpha}\right)^{\mathbb{Z}_{3}}$-modules
are as follows:

\begin{proposition} \label{quantum dimensions}For $i\in\mathbb{Z}_{2}$,
$\epsilon\in\mathbb{Z}_{3}$, $k=1,2$,  $q\dim_{\mathcal{U}}M^{i}=3,$ $
q\dim_{\mathcal{U}}\widetilde{M}_{i}\left[\epsilon\right]=1,$ $
q\dim_{\mathcal{U}}\widehat{M}_{\tau^{k},i}\left[\epsilon\right]=2.$

\end{proposition}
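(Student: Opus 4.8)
The plan is to compute each quantum dimension by exploiting the explicit decomposition of the three families of irreducible $\mathcal{U}$-modules as modules over the rational subalgebra $V_{\mathbb{Z}\beta_1}\otimes V_L^{\tau}$, together with multiplicativity of quantum dimensions under tensor products and the fact that quantum dimension is additive over direct sums of modules over a fixed vertex operator algebra. The key input facts are: $\mathrm{qdim}_{V_{\mathbb{Z}\beta_1}}V_{\lambda+\mathbb{Z}\beta_1}=1$ for every coset (lattice VOAs have all quantum dimensions equal to $1$, since all irreducible modules are simple currents by Proposition \ref{Fusion-V_L}); the values $\mathrm{qdim}_{V_L^{\tau}}$ from Proposition \ref{quantum dimension of V_Ltau}, namely $1$ for $V_{L^{(0,j)}}[\epsilon]$, $3$ for $V_{L^{(c,j)}}$, and $2$ for $V_L^{T,j}(\tau^k)[\epsilon]$; and the transitivity/multiplicativity of quantum dimension under a chain of vertex operator subalgebras, which lets me pass between $\mathcal{U}$-modules, $V_{\mathbb{Z}\beta_1}\otimes V_L^{\tau}$-modules, and tensor factors.

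First I would establish (or invoke) the transitivity principle: if $V_0\subset V_1$ are rational $C_2$-cofinite VOAs of CFT type with the quantum-dimension hypotheses satisfied, and $M$ is an irreducible $V_1$-module, then $\mathrm{qdim}_{V_1}M\cdot\mathrm{qdim}_{V_0}V_1=\mathrm{qdim}_{V_0}M$, because $Z_M/Z_{V_1}$ and $Z_{V_1}/Z_{V_0}$ both have well-defined $y\to 0$ limits and $Z_M/Z_{V_0}$ factors as their product. Here $V_0=V_{\mathbb{Z}\beta_1}\otimes V_L^{\tau}$, $V_1=\mathcal{U}$, and $\mathrm{qdim}_{V_0}\mathcal{U}=3$ since $\mathcal{U}$ is a direct sum of three irreducible $V_0$-modules each of quantum dimension $1$ (being tensor products $V_{\lambda+\mathbb{Z}\beta_1}\otimes V_{L^{(0,j)}}[0]$, a product of two simple currents). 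Then for any irreducible $\mathcal{U}$-module $M$ appearing in Proposition \ref{all modules} as a sum of three terms $V_{\lambda_r+\mathbb{Z}\beta_1}\otimes N_r$, additivity and tensor-multiplicativity give $\mathrm{qdim}_{V_0}M=\sum_{r}1\cdot\mathrm{qdim}_{V_L^{\tau}}N_r=3\,d$, where $d$ is the common value $\mathrm{qdim}_{V_L^{\tau}}N_r$ (the three $N_r$ lie in the same family). Dividing by $3$ yields $\mathrm{qdim}_{\mathcal{U}}M=d$: this gives $3$ for $M^i$ (since $N_r=V_{L^{(c,j)}}$), $1$ for $\widetilde{M}_i[\epsilon]$ (since $N_r=V_{L^{(0,j)}}[\epsilon]$), and $2$ for $\widehat{M}_{\tau^k,i}[\epsilon]$ (since $N_r=V_L^{T,\ast}(\tau^k)[\epsilon]$), as claimed.

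An alternative, slightly cleaner route avoids the transitivity lemma and works directly: normalize the computation against $\mathcal{U}$ itself, using that $\mathcal{U}\cong\sum_{r}V_{\lambda_r+\mathbb{Z}\beta_1}\otimes V_{L^{(0,r)}}[0]$ with the same three cosets $\lambda_0,\lambda_1,\lambda_2$ that appear in each $M$, and that each module $M$ in Proposition \ref{all modules} pairs up coset-by-coset with $\mathcal{U}$ (same $V_{\mathbb{Z}\beta_1}$-factors). Then
\[
\mathrm{qdim}_{\mathcal{U}}M=\lim_{y\to0}\frac{Z_M(iy)}{Z_{\mathcal{U}}(iy)}
=\lim_{y\to0}\frac{\sum_r Z_{V_{\lambda_r+\mathbb{Z}\beta_1}}(iy)\,Z_{N_r}(iy)}{\sum_r Z_{V_{\lambda_r+\mathbb{Z}\beta_1}}(iy)\,Z_{V_{L^{(0,r)}}[0]}(iy)},
\]
and since $Z_{N_r}(iy)/Z_{V_{L^{(0,r)}}[0]}(iy)\to d$ uniformly in $r$ (as $V_{L^{(0,r)}}[0]$ has quantum dimension $1$ over $V_L^{\tau}$, its character has the same leading growth as $Z_{V_L^{\tau}}$), the ratio of sums tends to $d$. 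I would present whichever of these two arguments reads most smoothly.

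The main obstacle is purely bookkeeping rather than conceptual: one must be careful that quantum dimension is genuinely additive over finite direct sums (immediate from $Z_{M\oplus N}=Z_M+Z_N$ and the existence of the limits, which is guaranteed by Proposition \ref{possible values of quantum dimensions} applied to $V_L^{\tau}$ and to $\mathcal{U}$), and that taking the $y\to0$ limit of a ratio of finite sums is legitimate here — this is fine because the denominator's limit, after dividing both numerator and denominator by $Z_{\mathcal{U}}(iy)$ or by $Z_{V_{\mathbb{Z}\beta_1}\otimes V_L^{\tau}}(iy)$, is a positive number (it is $1$, resp. $3$). The only subtlety worth a sentence in the proof is justifying $\mathrm{qdim}_{V_{\mathbb{Z}\beta_1}}V_{\lambda+\mathbb{Z}\beta_1}=1$: this follows from Proposition \ref{Fusion-V_L}, which shows every irreducible $V_{\mathbb{Z}\beta_1}$-module is a simple current, hence has quantum dimension $1$ by Proposition \ref{possible values of quantum dimensions}(3). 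With these points noted, the three computations are one line each.
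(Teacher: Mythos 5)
Your proposal is correct and follows essentially the same route as the paper: the paper also computes $q\dim_{V_{\mathbb{Z}\beta_{1}}\otimes V_{L}^{\tau}}W$ for each module via additivity over the three summands and multiplicativity over the tensor factors (using that every $V_{\frac{i}{6}\beta_1+\mathbb{Z}\beta_1}$ is a simple current and the known quantum dimensions of irreducible $V_L^{\tau}$-modules), obtaining $3,9,3,6$ for $\mathcal{U}, M^i, \widetilde{M}_i[\epsilon], \widehat{M}_{\tau^k,i}[\epsilon]$ respectively, and then divides by $q\dim_{V_{\mathbb{Z}\beta_1}\otimes V_L^{\tau}}\mathcal{U}=3$. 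The only difference is presentational: you spell out the justification of the transitivity identity $q\dim_{\mathcal{U}}W=q\dim_{V_0}W/q\dim_{V_0}\mathcal{U}$, which the paper simply asserts as following from the definition.
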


\begin{proof} For each irreducible $\mathcal{U}$-module $W$, by
definition of quantum dimension, we have
\[
q\dim_{\mathcal{U}}W=\frac{q\dim_{V_{\mathbb{Z}\beta_{1}}\otimes V_{L}^{\tau}}W}{q\dim_{V_{\mathbb{Z}\beta_{1}}\otimes V_{L}^{\tau}}\mathcal{U}}.
\]
Thus we first consider $q\dim_{V_{\mathbb{Z}\beta_{1}}\otimes V_{L}^{\tau}}W.$
By fusion rules of irreducible modules of $V_{\mathbb{Z}\beta_{1}}$-modules
in Proposition \ref{Fusion-V_L}, we see that each $V_{\frac{i}{6}\beta_{1}+\mathbb{Z}\beta_{1}}$,
$i\in\mathbb{Z}_{6}$ is a simple current and hence by Proposition
 \ref{possible values of quantum dimensions}, $q\dim_{V_{\mathbb{Z}\beta_{1}}}V_{\frac{i}{6}\beta_{1}+\mathbb{Z}\beta_{1}}=1$,
$i\in\mathbb{Z}_{6}$. Also by Proposition \ref{quantum dimension of V_Ltau}
and Proposition  \ref{possible values of quantum dimensions}, we get
\[
q\dim_{V_{\mathbb{Z}\beta_{1}}\otimes V_{L}^{\tau}}\mathcal{U}=3\text{, }\ q\dim_{V_{\mathbb{Z}\beta_{1}}\otimes V_{L}^{\tau}}M^{i}=9,\ q\dim_{V_{\mathbb{Z}\beta_{1}}\otimes V_{L}^{\tau}}\widetilde{M}_{i}\left[\epsilon\right]=3,q\dim_{V_{\mathbb{Z}\beta_{1}}\otimes V_{L}^{\tau}}\widehat{M}_{\tau^{k},i}\left[\epsilon\right]=6.
\]
Since $q\dim_{\mathcal{U}}W=\frac{1}{3}q\dim_{V_{\mathbb{Z}\beta_{1}}\otimes V_{L}^{\tau}}W$,
we obtain
\[
q\dim_{\mathcal{U}}M^{i}=3,\ q\dim_{\mathcal{U}}\widetilde{M}_{i}\left[\epsilon\right]=1,\ q\dim_{\mathcal{U}}\widehat{M}_{\tau^{k},i}\left[\epsilon\right]=2.
\]

\end{proof}

\subsection{Fusion Rules among irreducible $\mathcal{U}$-modules}

Now we use the quantum dimensions obtained in the previous subsection
and the fusion rules of irreducible $V_{\mathbb{Z}\beta_{1}}$- and
$V_{L}^{\tau}$-modules in \cite{DL,TY2,C,CL} to determine the fusion
products of the 3-permutation orbifold model.

\begin{theorem}

For $i,j\in\mathbb{Z}_{2}$, $\epsilon,\epsilon_{1}\in\mathbb{Z}_{3},$
$k=1,2$, we have the following fusion product:

\begin{equation}
M^{i}\boxtimes M^{j}\cong\widetilde{M}_{i+j}\left[0\right]+\widetilde{M}_{i+j}\left[1\right]+\widetilde{M}_{i+j}\left[2\right]+2M^{i+j}\label{Fusion-nondiag-nondiag}
\end{equation}

\begin{equation}
M^{i}\boxtimes\widetilde{M}_{j}\left[\epsilon\right]\cong M^{i+j}\label{Fusion-nondiag-diag}
\end{equation}

\begin{equation}
\widetilde{M}_{i}\left[\epsilon\right]\boxtimes\widetilde{M}_{j}\left[\epsilon_{1}\right]\cong\widetilde{M}_{i+j}\left[\epsilon+\epsilon_{1}\right]\label{Fusion-diag-diag}
\end{equation}

\begin{equation}
M^{i}\boxtimes\widehat{M}_{\tau^{k},j}\left[\epsilon\right]\cong\widehat{M}_{\tau^{k},i+j}\left[0\right]+\widehat{M}_{\tau^{k},i+j}\left[1\right]+\widehat{M}_{\tau^{k},i+j}\left[2\right]\label{Fusion-nondiag-twisted}
\end{equation}

\begin{equation}
\widetilde{M}_{i}\left[\epsilon\right]\boxtimes\widehat{M}_{\tau^{k},j}\left[\epsilon\right]\cong\widehat{M}_{\tau^{k},i+j}\left[k\epsilon+\epsilon_{1}\right]\label{Fusion-diag-twisted}
\end{equation}

\begin{equation}
\widehat{M}_{\tau^{k},i}\left[\epsilon\right]\boxtimes\widehat{M}_{\tau^{k},j}\left[\epsilon_{1}\right]\cong\widehat{M}_{\tau^{2k},i+j}\left[-\left(\epsilon+\epsilon_{1}\right)\right]+\widehat{M}_{\tau^{2k},i+j}\left[2-\left(\epsilon+\epsilon_{1}\right)\right]\label{Fusion-twisted-twisted}
\end{equation}

\begin{equation}
\widehat{M}_{\tau^{k},i}\left[\epsilon\right]\boxtimes\widehat{M}_{\tau^{2k},j}\left[\epsilon_{1}\right]\cong\widetilde{M}_{i+j}\left[\epsilon+2\epsilon_{1}\right]+M^{i+j}\label{Fusion-twisted-twisted-case 2}
\end{equation}

where $i+j$ should be understood to be $i+j$ $\mod\ 2$.

\end{theorem}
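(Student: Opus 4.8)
The plan is to reduce every $\mathcal{U}$-fusion product to the already-established fusion products over the subalgebra $V^{0}:=V_{\mathbb{Z}\beta_{1}}\otimes V_{L}^{\tau}$, of which $\mathcal{U}$ is a $\mathbb{Z}_{3}$-graded simple current extension (Proposition \ref{all modules} and the discussion before it). The two known inputs are the $V_{\mathbb{Z}\beta_{1}}$-fusion $V_{\lambda+\mathbb{Z}\beta_{1}}\boxtimes V_{\mu+\mathbb{Z}\beta_{1}}=V_{\lambda+\mu+\mathbb{Z}\beta_{1}}$ (Proposition \ref{Fusion-V_L}) and the $V_{L}^{\tau}$-fusion of Proposition \ref{Fusion Product of V_L tau}. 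I will use two bookkeeping devices. First, by Proposition \ref{possible values of quantum dimensions}(2) quantum dimension is multiplicative under $\boxtimes$ and additive under $\oplus$, and the values $q\dim_{\mathcal{U}}$ are known from Proposition \ref{quantum dimensions}; in particular $q\dim_{\mathcal{U}}(A\boxtimes_{\mathcal{U}}B)=q\dim_{\mathcal{U}}A\cdot q\dim_{\mathcal{U}}B$. Second, by Proposition \ref{all modules} each of the $20$ irreducible $\mathcal{U}$-modules restricts to a direct sum of exactly three irreducible $V^{0}$-modules, all of the same quantum dimension over $V^{0}$, and the $V^{0}$-constituents of distinct irreducible $\mathcal{U}$-modules are pairwise inequivalent; hence each such irreducible $V^{0}$-module lies in a unique irreducible $\mathcal{U}$-module, and $q\dim_{V^{0}}A_{0}=q\dim_{\mathcal{U}}A$ for any $V^{0}$-constituent $A_{0}$ of an irreducible $\mathcal{U}$-module $A$.

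The three products involving a $\widetilde{M}_{i}[\epsilon]$, namely \eqref{Fusion-nondiag-diag}, \eqref{Fusion-diag-diag}, \eqref{Fusion-diag-twisted}, are the easy cases: since $q\dim_{\mathcal{U}}\widetilde{M}_{i}[\epsilon]=1$, each $\widetilde{M}_{i}[\epsilon]$ is a simple current, so the product with any irreducible $\mathcal{U}$-module is irreducible and it is enough to exhibit one $V^{0}$-constituent. Restricting to $V^{0}$ and choosing the constituents $V_{\frac{i}{2}\beta_{1}+\mathbb{Z}\beta_{1}}\otimes X_{i}$ and $V_{\frac{j}{2}\beta_{1}+\mathbb{Z}\beta_{1}}\otimes X_{j}$ of the two factors, the $V_{\mathbb{Z}\beta_{1}}$-part fuses to $V_{\frac{i+j}{2}\beta_{1}+\mathbb{Z}\beta_{1}}$ and $X_{i}\boxtimes_{V_{L}^{\tau}}X_{j}$ is read off from parts (i), (ii), (iv) of Proposition \ref{Fusion Product of V_L tau}. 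The resulting irreducible $V^{0}$-module lies in exactly one module on the claimed right-hand side, which is therefore the whole product; this pins the labels, e.g.\ the shift $k\epsilon+\epsilon_{1}$ in \eqref{Fusion-diag-twisted} is the shift in part (iv), and the conformal weights in Propositions \ref{rationality of V_Ltau} and \ref{all modules} give a cross-check on the twist $\tau^{k}$ and the index $i+j$.

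For the reducible products \eqref{Fusion-nondiag-nondiag}, \eqref{Fusion-nondiag-twisted}, \eqref{Fusion-twisted-twisted}, \eqref{Fusion-twisted-twisted-case 2}, I again restrict to $V^{0}$ and compute $A_{0}\boxtimes_{V^{0}}B_{0}$ for chosen constituents, now using parts (iii), (v), (vi), (vii) of Proposition \ref{Fusion Product of V_L tau}; this yields $V_{\frac{i+j}{2}\beta_{1}+\mathbb{Z}\beta_{1}}$ tensored with the relevant $V_{L}^{\tau}$-fusion product. Each $V^{0}$-constituent so produced lies in a unique irreducible $\mathcal{U}$-module; assembling these with the multiplicities inherited from the $V_{L}^{\tau}$-fusion (notably the coefficient $2$ in part (iii)) gives precisely the asserted right-hand side, and one checks that its total quantum dimension over $\mathcal{U}$ equals $q\dim_{\mathcal{U}}A\cdot q\dim_{\mathcal{U}}B=q\dim_{\mathcal{U}}(A\boxtimes_{\mathcal{U}}B)$. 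To finish, one must know this candidate really is a submodule of $A\boxtimes_{\mathcal{U}}B$: here one uses that, for the simple current extension $\mathcal{U}\supseteq V^{0}$, $A\boxtimes_{\mathcal{U}}B$ is the module induced from $A_{0}\boxtimes_{V^{0}}B_{0}$, so every irreducible $\mathcal{U}$-module containing a $V^{0}$-constituent of $A_{0}\boxtimes_{V^{0}}B_{0}$ occurs, with multiplicity at least that of the constituent. Since it then cannot be strictly larger than a module of the same total quantum dimension, equality follows.

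The computational parts are routine given Propositions \ref{Fusion-V_L}, \ref{Fusion Product of V_L tau}, \ref{quantum dimensions}; the substantive point is the comparison between $\mathcal{U}$- and $V^{0}$-fusion. The upper bound $\dim I_{\mathcal{U}}\le\dim I_{V^{0}}$ between spaces of intertwining operators of a fixed type is immediate by restriction and bounds the support of $A\boxtimes_{\mathcal{U}}B$ from above; the hard part is the matching lower bound — that the $\mathcal{U}$-modules visible at the $V^{0}$ level really occur with the correct multiplicities, in particular the multiplicity-$2$ terms in \eqref{Fusion-nondiag-nondiag} — and this is where the induced-module theory for simple current extensions (equivalently, an explicit construction of the $\mathcal{U}$-intertwining operators, as in the $n=2$ analysis of \cite{DXY1,DXY2}) is required. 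The other delicate point is the index arithmetic for the twisted modules $\widehat{M}_{\tau^{k},i}[\epsilon]$: the shifts $j\mapsto j-ki$, the relabelling $\tau^{k}\leftrightarrow\tau^{2k}$, and the reduction $i+j\bmod 2$ all have to be tracked carefully through Proposition \ref{Fusion Product of V_L tau}.
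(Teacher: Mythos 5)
Your proposal is correct and follows the same basic strategy as the paper: restrict everything to $V^{0}=V_{\mathbb{Z}\beta_{1}}\otimes V_{L}^{\tau}$, bound the $\mathcal{U}$-fusion coefficients by the $V^{0}$-fusion coefficients computed from Propositions \ref{Fusion-V_L} and \ref{Fusion Product of V_L tau} (this is exactly the use of Propositions 2.9 and 2.10 of [ADL] in the paper), and then use the quantum dimensions of Proposition \ref{quantum dimensions}. The one place you genuinely diverge is the closing step for the reducible products \eqref{Fusion-nondiag-nondiag}, \eqref{Fusion-nondiag-twisted}, \eqref{Fusion-twisted-twisted}, \eqref{Fusion-twisted-twisted-case 2}: you identify the ``matching lower bound'' on multiplicities as the hard part and invoke induced-module theory for the simple current extension $V^{0}\subseteq\mathcal{U}$ to supply it, without proving the assertion that $A\boxtimes_{\mathcal{U}}B$ is induced from $A_{0}\boxtimes_{V^{0}}B_{0}$. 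That extra ingredient is unnecessary, and as stated it is the weakest link in your write-up. The paper gets the lower bound for free: since $q\dim_{\mathcal{U}}(A\boxtimes_{\mathcal{U}}B)=q\dim_{\mathcal{U}}A\cdot q\dim_{\mathcal{U}}B$ exactly (Proposition \ref{possible values of quantum dimensions}(2)), and the sum of the restriction upper bounds weighted by quantum dimensions already equals this product (e.g.\ $2\cdot 3+3\cdot 1=9$ for \eqref{Fusion-nondiag-nondiag}), every upper bound must be attained; no containment of a candidate module needs to be established independently. You in fact have all of these pieces in your own bookkeeping, so your argument closes once reorganized this way, but you have misdiagnosed where the work lies. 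A second, minor difference: for \eqref{Fusion-nondiag-diag} the paper pins down the index $i+j$ by restricting an explicit nonzero intertwining operator from $V_{\mathbb{Z}\alpha}^{\otimes 3}$ via [DL], whereas you pin it down through the $V_{\mathbb{Z}\beta_{1}}$-grading of the $V^{0}$-constituents; both work, and yours is uniform with the other cases.
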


\begin{proof}

Proof of (\ref{Fusion-nondiag-nondiag}): First by  \ref{quantum dimensions} and Proposition  \ref{possible values of quantum dimensions},
we have
\[
q\dim_{\mathcal{U}}\left(M^{i}\boxtimes M^{j}\right)=q\dim_{\mathcal{U}}M^{i}\cdot q\dim_{\mathcal{U}}M^{j}=9.
\]

By fusion products of irreducible $V_{L}^{\tau}$-modules in Proposition
\ref{Fusion Product of V_L tau}, we know that for any $i,j,l,\epsilon\in\mathbb{Z}_{3}$
and $k\in\left\{ 1,2\right\} $, $N_{V_{L}^{\tau}}\left(_{V_{L^{\left(c,i\right)}\ }V_{L^{\left(c,l\right)}}}^{V_{L}^{T,j}\left(\tau^{k}\right)\left[\epsilon\right]}\right)=0$
. So it is clear that
\[
N_{\mathcal{U}}\left(_{M^{i}\ M^{j}}^{\widehat{M}_{\tau^{k},l}\left[\epsilon\right]}\right)=0\ \mbox{for\ all\ }i,j,l,\epsilon\in\mathbb{Z}_{3},k\in\left\{ 1,2\right\} .
\]

Now we consider $N_{\mathcal{U}}\left(_{M^{i}\ M^{j}}^{M^{k}}\right)$.
Let $V=\mathcal{U}$ and $U=V_{\mathbb{Z}\beta_{1}}\otimes V_{L}^{\tau}$
in Proposition 2.9 \cite{ADL}, we get

\[
N_{\mathcal{U}}\left(_{M^{i}\ M^{j}}^{M^{k}}\right)\le N_{V_{\mathbb{Z}\beta_{1}}\otimes V_{L}^{\tau}}\left(_{V_{\frac{i}{2}\beta_{1}+\mathbb{Z}\beta_{1}}\otimes V_{L^{\left(c,0\right)}}\ V_{\frac{j}{2}\beta_{1}+\mathbb{Z}\beta_{1}}\otimes V_{L^{\left(c,0\right)}}}^{M^{k}}\right).
\]

Since $M^{k}=V_{\frac{k}{2}\beta_{1}+\mathbb{Z}\beta_{1}}\otimes V_{L^{\left(c,0\right)}}+V_{\frac{3k+2}{6}\beta_{1}+\mathbb{Z}\beta_{1}}\otimes V_{L^{\left(c,1\right)}}+V_{\frac{3k+4}{6}\beta_{1}+\mathbb{Z}\beta_{1}}\otimes V_{L^{\left(c,2\right)}}$,
by Proposition 2.10 of \cite{ADL}, we get
\begin{align*}
 & N_{V_{\mathbb{Z}\beta_{1}}\otimes V_{L}^{\tau}}\left(_{V_{\frac{i}{2}\beta_{1}+\mathbb{Z}\beta_{1}}\otimes V_{L^{\left(c,0\right)}}\ V_{\frac{j}{2}\beta_{1}+\mathbb{Z}\beta_{1}}\otimes V_{L^{\left(c,0\right)}}}^{M^{k}}\right)\\
 & =N_{V_{\mathbb{Z}\beta_{1}}\otimes V_{L}^{\tau}}\left(_{V_{\frac{i}{2}\beta_{1}+\mathbb{Z}\beta_{1}}\otimes V_{L^{\left(c,0\right)}}\ V_{\frac{j}{2}\beta_{1}+\mathbb{Z}\beta_{1}}\otimes V_{L^{\left(c,0\right)}}}^{V_{\frac{k}{2}\beta_{1}+\mathbb{Z}\beta_{1}}\otimes V_{L^{\left(c,0\right)}}}\right)\\
 & +N_{V_{\mathbb{Z}\beta_{1}}\otimes V_{L}^{\tau}}\left(_{V_{\frac{i}{2}\beta_{1}+\mathbb{Z}\beta_{1}}\otimes V_{L^{\left(c,0\right)}}\ V_{\frac{j}{2}\beta_{1}+\mathbb{Z}\beta_{1}}\otimes V_{L^{\left(c,0\right)}}}^{V_{\frac{3k+2}{6}\beta_{1}+\mathbb{Z}\beta_{1}}\otimes V_{L^{\left(c,1\right)}}}\right)\\
 & +N_{V_{\mathbb{Z}\beta_{1}}\otimes V_{L}^{\tau}}\left(_{V_{\frac{i}{2}\beta_{1}+\mathbb{Z}\beta_{1}}\otimes V_{L^{\left(c,0\right)}}\ V_{\frac{j}{2}\beta_{1}+\mathbb{Z}\beta_{1}}\otimes V_{L^{\left(c,0\right)}}}^{V_{\frac{3k+4}{6}\beta_{1}+\mathbb{Z}\beta_{1}}\otimes V_{L^{\left(c,2\right)}}}\right)\\
 & =N_{V_{\mathbb{Z}\beta_{1}}}\left(_{V_{\frac{i}{2}\beta_{1}+\mathbb{Z}\beta_{1}}\ V_{\frac{j}{2}\beta_{1}+\mathbb{Z}\beta_{1}}}^{V_{\frac{k}{2}\beta_{1}+\mathbb{Z}\beta_{1}}}\right)\cdot N_{V_{L}^{\tau}}\left(_{V_{L^{\left(c,0\right)}}\ V_{L^{\left(c,0\right)}}}^{V_{L^{\left(c,0\right)}}}\right)\\
 & +N_{V_{\mathbb{Z}\beta_{1}}}\left(_{V_{\frac{i}{2}\beta_{1}+\mathbb{Z}\beta_{1}}\ V_{\frac{j}{2}\beta_{1}+\mathbb{Z}\beta_{1}}}^{V_{\frac{3k+2}{6}\beta_{1}+\mathbb{Z}\beta_{1}}}\right)\cdot N_{V_{L}^{\tau}}\left(_{V_{L^{\left(c,0\right)}}\ V_{L^{\left(c,0\right)}}}^{V_{L^{\left(c,1\right)}}}\right)\\
 & +N_{V_{\mathbb{Z}\beta_{1}}}\left(_{V_{\frac{i}{2}\beta_{1}+\mathbb{Z}\beta_{1}}\ V_{\frac{j}{2}\beta_{1}+\mathbb{Z}\beta_{1}}}^{V_{\frac{3k+4}{6}\beta_{1}+\mathbb{Z}\beta_{1}}}\right)\cdot N_{V_{L}^{\tau}}\left(_{V_{L^{\left(c,0\right)}}\ V_{L^{\left(c,0\right)}}}^{V_{L^{\left(c,2\right)}}}\right).
\end{align*}

By fusion rules of irreducible $V_{L}^{\tau}$-modules in Proposition
\ref{Fusion Product of V_L tau}, we have
\[
N_{V_{L}^{\tau}}\left(_{V_{L^{\left(c,0\right)}}\ V_{L^{\left(c,0\right)}}}^{V_{L^{\left(c,0\right)}}}\right)=2,\ N_{V_{L}^{\tau}}\left(_{V_{L^{\left(c,0\right)}}\ V_{L^{\left(c,0\right)}}}^{V_{L^{\left(c,1\right)}}}\right)=N_{V_{L}^{\tau}}\left(_{V_{L^{\left(c,0\right)}}\ V_{L^{\left(c,0\right)}}}^{V_{L^{\left(c,2\right)}}}\right)=0.
\]

Also note that by fusion rules of irreducible $V_{\mathbb{Z}\beta_{1}}$-modules
in Proposition \ref{Fusion-V_L},
\[
N_{V_{\mathbb{Z}\beta_{1}}}\left(_{V_{\frac{i}{2}\beta_{1}+\mathbb{Z}\beta_{1}}\ V_{\frac{j}{2}\beta_{1}+\mathbb{Z}\beta_{1}}}^{V_{\frac{k}{2}\beta_{1}+\mathbb{Z}\beta_{1}}}\right)=1\ \mbox{only\ if}\ k= i+j\ \mbox{mod}\ 2.
\]
Thus
\[
N_{V_{\mathbb{Z}\beta_{1}}\otimes V_{L}^{\tau}}\left(_{V_{\frac{i}{2}\beta_{1}+\mathbb{Z}\beta_{1}}\otimes V_{L^{\left(c,0\right)}}\ V_{\frac{j}{2}\beta_{1}+\mathbb{Z}\beta_{1}}\otimes V_{L^{\left(c,0\right)}}}^{M^{i+j}}\right)=2\
\]

and

\[
\mbox{\ \ensuremath{N_{V_{\mathbb{Z}\beta_{1}}\otimes V_{L}^{\tau}}\left(_{V_{\frac{i}{2}\beta_{1}+\mathbb{Z}\beta_{1}}\otimes V_{L^{\left(c,0\right)}}\ V_{\frac{j}{2}\beta_{1}+\mathbb{Z}\beta_{1}}\otimes V_{L^{\left(c,0\right)}}}^{M^{k}}\right)=0\ \mbox{if\ }k\not=i+j\ \mbox{mod}\ 2.}}
\]

Thus we obtain
\[
N_{\mathcal{U}}\left(_{M^{i}\ M^{j}}^{M^{i+j}}\right)\le2\ \mbox{and\ }N_{\mathcal{U}}\left(_{M^{i}\ M^{j}}^{M^{k}}\right)=0\ \mbox{if\ }k\not=i+j\ \mbox{mod}\ 2.
\]

Now we consider $N_{\mathcal{U}}\left(_{M^{i}\ M^{j}}^{\widetilde{M}_{k}\left[\epsilon\right]}\right)$.
Similarly, we take $V=\mathcal{U}$ and $U=V_{\mathbb{Z}\beta_{1}}\otimes V_{L}^{\tau}$
in Proposition 2.9 \cite{ADL}, we get

\[
N_{\mathcal{U}}\left(_{M^{i}\ M^{j}}^{\widetilde{M}_{k}\left[\epsilon\right]}\right)\le N_{V_{\mathbb{Z}\beta_{1}}\otimes V_{L}^{\tau}}\left(_{V_{\frac{i}{2}\beta_{1}+\mathbb{Z}\beta_{1}}\otimes V_{L^{\left(c,0\right)}}\ V_{\frac{j}{2}\beta_{1}+\mathbb{Z}\beta_{1}}\otimes V_{L^{\left(c,0\right)}}}^{\widetilde{M}_{k}\left[\epsilon\right]}\right).
\]

Since $\widetilde{M}_{k}\left[\epsilon\right]=V_{\frac{k}{2}\beta_{1}+\mathbb{Z}\beta_{1}}\otimes V_{L^{\left(0,0\right)}}[\epsilon]+V_{\frac{3k+2}{6}\beta_{1}+\mathbb{Z}\beta_{1}}\otimes V_{L^{\left(0,1\right)}}[\epsilon]+V_{\frac{3k+4}{6}\beta_{1}+\mathbb{Z}\beta_{1}}\otimes V_{L^{\left(0,2\right)}}[\epsilon]$,
by Proposition 2.10 of \cite{ADL}, we get
\begin{align*}
 & N_{V_{\mathbb{Z}\beta_{1}}\otimes V_{L}^{\tau}}\left(_{V_{\frac{i}{2}\beta_{1}+\mathbb{Z}\beta_{1}}\otimes V_{L^{\left(c,0\right)}}\ V_{\frac{j}{2}\beta_{1}+\mathbb{Z}\beta_{1}}\otimes V_{L^{\left(c,0\right)}}}^{\widetilde{M}_{k}\left[\epsilon\right]}\right)\\
 & =N_{V_{\mathbb{Z}\beta_{1}}\otimes V_{L}^{\tau}}\left(_{V_{\frac{i}{2}\beta_{1}+\mathbb{Z}\beta_{1}}\otimes V_{L^{\left(c,0\right)}}\ V_{\frac{j}{2}\beta_{1}+\mathbb{Z}\beta_{1}}\otimes V_{L^{\left(c,0\right)}}}^{V_{\frac{k}{2}\beta_{1}+\mathbb{Z}\beta_{1}}\otimes V_{L^{\left(0,0\right)}}[\epsilon]}\right)\\
 & +N_{V_{\mathbb{Z}\beta_{1}}\otimes V_{L}^{\tau}}\left(_{V_{\frac{i}{2}\beta_{1}+\mathbb{Z}\beta_{1}}\otimes V_{L^{\left(c,0\right)}}\ V_{\frac{j}{2}\beta_{1}+\mathbb{Z}\beta_{1}}\otimes V_{L^{\left(c,0\right)}}}^{V_{\frac{3k+2}{6}\beta_{1}+\mathbb{Z}\beta_{1}}\otimes V_{L^{\left(0,1\right)}}[\epsilon]}\right)\\
 & +N_{V_{\mathbb{Z}\beta_{1}}\otimes V_{L}^{\tau}}\left(_{V_{\frac{i}{2}\beta_{1}+\mathbb{Z}\beta_{1}}\otimes V_{L^{\left(c,0\right)}}\ V_{\frac{j}{2}\beta_{1}+\mathbb{Z}\beta_{1}}\otimes V_{L^{\left(c,0\right)}}}^{V_{\frac{3k+4}{6}\beta_{1}+\mathbb{Z}\beta_{1}}\otimes V_{L^{\left(0,2\right)}}[\epsilon]}\right)\\
 & =N_{V_{\mathbb{Z}\beta_{1}}}\left(_{V_{\frac{i}{2}\beta_{1}+\mathbb{Z}\beta_{1}}\ V_{\frac{j}{2}\beta_{1}+\mathbb{Z}\beta_{1}}}^{V_{\frac{k}{2}\beta_{1}+\mathbb{Z}\beta_{1}}}\right)\cdot N_{V_{L}^{\tau}}\left(_{V_{L^{\left(c,0\right)}}\ V_{L^{\left(c,0\right)}}}^{V_{L^{\left(0,0\right)}}[\epsilon]}\right)\\
 & +N_{V_{\mathbb{Z}\beta_{1}}}\left(_{V_{\frac{i}{2}\beta_{1}+\mathbb{Z}\beta_{1}}\ V_{\frac{j}{2}\beta_{1}+\mathbb{Z}\beta_{1}}}^{V_{\frac{3k+2}{6}\beta_{1}+\mathbb{Z}\beta_{1}}}\right)\cdot N_{V_{L}^{\tau}}\left(_{V_{L^{\left(c,0\right)}}\ V_{L^{\left(c,0\right)}}}^{V_{L^{\left(0,1\right)}}[\epsilon]}\right)\\
 & +N_{V_{\mathbb{Z}\beta_{1}}}\left(_{V_{\frac{i}{2}\beta_{1}+\mathbb{Z}\beta_{1}}\ V_{\frac{j}{2}\beta_{1}+\mathbb{Z}\beta_{1}}}^{V_{\frac{3k+4}{6}\beta_{1}+\mathbb{Z}\beta_{1}}}\right)\cdot N_{V_{L}^{\tau}}\left(_{V_{L^{\left(c,0\right)}}\ V_{L^{\left(c,0\right)}}}^{V_{L^{\left(0,2\right)}}[\epsilon]}\right).
\end{align*}

By fusion rules of irreducible $V_{L}^{\tau}$-modules in Proposition
\ref{Fusion Product of V_L tau}, we have

\[
N_{V_{L}^{\tau}}\left(_{V_{L^{\left(c,0\right)}}\ V_{L^{\left(c,0\right)}}}^{V_{L^{\left(0,0\right)}}[\epsilon]}\right)=1\ \mbox{for\ }\epsilon\in\mathbb{Z}_{3},\ \mbox{and\ }N_{V_{L}^{\tau}}\left(_{V_{L^{\left(c,0\right)}}\ V_{L^{\left(c,0\right)}}}^{V_{L^{\left(0,1\right)}}[\epsilon]}\right)=N_{V_{L}^{\tau}}\left(_{V_{L^{\left(c,0\right)}}\ V_{L^{\left(c,0\right)}}}^{V_{L^{\left(0,2\right)}}[\epsilon]}\right)=0.
\]

Also note that by fusion rules of irreducible $V_{\mathbb{Z}\beta_{1}}$-modules
in Proposition \ref{Fusion-V_L},
\[
N_{V_{\mathbb{Z}\beta_{1}}}\left(_{V_{\frac{i}{2}\beta_{1}+\mathbb{Z}\beta_{1}}\ V_{\frac{j}{2}\beta_{1}+\mathbb{Z}\beta_{1}}}^{V_{\frac{k}{2}\beta_{1}+\mathbb{Z}\beta_{1}}}\right)=1\ \mbox{only\ if\ }k=i+j\ \mbox{mod\ }2.
\]
 Thus
\[
N_{V_{\mathbb{Z}\beta_{1}}\otimes V_{L}^{\tau}}\left(_{V_{\frac{i}{2}\beta_{1}+\mathbb{Z}\beta_{1}}\otimes V_{L^{\left(c,0\right)}}\ V_{\frac{j}{2}\beta_{1}+\mathbb{Z}\beta_{1}}\otimes V_{L^{\left(c,0\right)}}}^{\widetilde{M}_{i+j}\left[\epsilon\right]}\right)=1\ \mbox{for\ }\epsilon\in\mathbb{Z}_{3}
\]
 and
\[
N_{V_{\mathbb{Z}\beta_{1}}\otimes V_{L}^{\tau}}\left(_{V_{\frac{i}{2}\beta_{1}+\mathbb{Z}\beta_{1}}\otimes V_{L^{\left(c,0\right)}}\ V_{\frac{j}{2}\beta_{1}+\mathbb{Z}\beta_{1}}\otimes V_{L^{\left(c,0\right)}}}^{\widetilde{M}_{k}\left[\epsilon\right]}\right)=0\ \mbox{if\ }k\not=i+j\ \mbox{mod}\ 2.
\]
 Now we have
\[
N_{\mathcal{U}}\left(_{M^{i}\ M^{j}}^{\widetilde{M}_{i+j}\left[\epsilon\right]}\right)\le1\ \mbox{for\ }\in\mathbb{Z}_{3}\ \mbox{and\ }N_{\mathcal{U}}\left(_{M^{i}\ M^{j}}^{\widetilde{M}_{k}\left[\epsilon\right]}\right)=0\mbox{\ if\ }k\not=i+j\ \mbox{mod}\ 2.
\]

Combine all results above and use quantum dimensions in Proposition
\ref{quantum dimensions}, we see that

\[
N_{\mathcal{U}}\left(_{M^{i}\ M^{j}}^{\widetilde{M}_{i+j}\left[\epsilon\right]}\right)=1\ \mbox{for\ }\epsilon\in\mathbb{Z}_{3}\ \mbox{and}\ N_{\mathcal{U}}\left(_{M^{i}\ M^{j}}^{M^{i+j}}\right)=2.
\]
So we have proved (\ref{Fusion-nondiag-nondiag}).

Proof of (\ref{Fusion-nondiag-diag}): By Propositions \ref{quantum dimensions} and  \ref{possible values of quantum dimensions}, $\widetilde{M}_{j}\left[\epsilon\right]$ is a simple current. From the classification of irreducible $\mathcal{U}$-module, $M^{i}\boxtimes\widetilde{M}_{j}\left[\epsilon\right]=M^{k}$ for
some $k\in\left\{ 0,1\right\}.$ On the other hand, $M^i=(i+1,i+1,i)$ as $V_{\Z\alpha}^{\otimes 3}$-modules and $(i+1,i+1,i)\boxtimes_{V_{\Z\alpha}^{\otimes 3}}(j,j,j)\cong(i+j+1,i+j+1,i+j)=M^{i+j}.$
Let ${\cal Y}\in I_{V_{\Z\alpha}^{\otimes 3}}\left(\begin{array}{c}
M^{i+j}\\
M^i\ (j,j,j)
\end{array}\right)$ be nonzero. The restriction of action of ${\cal Y}$ on $\widetilde{M}_{j}\left[\epsilon\right]$ to $\widetilde{M}_{j}\left[\epsilon\right]$ is an nonzero intertwining operator in $I_{\cal U}\left(\begin{array}{c}
M^{i+j}\\
M^i\ \widetilde{M}_{j}\left[\epsilon\right]
\end{array}\right)$
from \cite{DL}. Thus $k=i+j.$

Proof of (\ref{Fusion-diag-diag}): Since
$q\dim_{\mathcal{U}}\left(\widetilde{M}_i[\epsilon]\boxtimes\widetilde{M}_{j}\left[\epsilon_1\right]\right)=q\dim_{\mathcal{U}}\widetilde{M}_{i}[\epsilon]\cdot q\dim_{\mathcal{U}}\widetilde{M}_{j}\left[\epsilon_1\right]=1,$
$\widetilde{M}_{i}[\epsilon]\boxtimes\widetilde{M}_{j}\left[\epsilon_1\right]=\widetilde{M}_k[\delta]$ for some $k\in \Z_2$ and $\delta\in\Z_3.$ Using $(i,i,i)\boxtimes_{V_{\Z\alpha}^{\otimes 3}}(j,j,j)\cong (i+j,i+j,i+j)$
we see that $k=i+j.$ One could conclude   $\delta=\epsilon+\epsilon_1$ from the explicit expression of the intertwining operator given in \cite{DL}. But we prove this result directly here.
By Proposition 2.9 of \cite{ADL} we see that
\[
N_{\mathcal{U}}\left(_{\widetilde{M}_i[\epsilon]\ \widetilde{M}_{j}\left[\epsilon_1\right]}^{\widetilde{M}_{i+j}[\delta]}\right)\le N_{V_{\mathbb{Z}\beta_{1}}\otimes V_{L}^{\tau}}\left(_{V_{\frac{i}{2}\beta_{1}+\mathbb{Z}\beta_{1}}\otimes V_{L^{\left(0,0\right)}}[\epsilon]\ V_{\frac{j}{2}\beta_{1}+\mathbb{Z}\beta_{1}}\otimes V_{L^{\left(0,0\right)}}[\epsilon_1]}^{\widetilde{M}_{i+j}[\delta]}\right)
\]
which is nonzero if and only if $\delta=\epsilon+\epsilon_1$ by Proposition \ref{Fusion Product of V_L tau}.

Proof of (\ref{Fusion-nondiag-twisted}) and (\ref{Fusion-diag-twisted}): First by fusion rules of irreducible $V_{L}^{\tau}$-modules in
Proposition \ref{Fusion Product of V_L tau} and classification of
irreducible $\mathcal{U}$-modules in Proposition \ref{all modules},
it is easy to see that
\[
N_{_{\mathcal{U}}}\left(_{M^{i}\ \widehat{M}_{\tau^{k},j}\left[\epsilon\right]}^{M^{l}}\right)=N_{\mathcal{U}}\left(_{M^{i}\ \widehat{M}_{\tau^{k},j}\left[\epsilon\right]}^{\widetilde{M}_{n}\left[\epsilon'\right]}\right)=N_{_{\mathcal{U}}}\left(_{M^{i}\ \widehat{M}_{\tau^{k},j}\left[\epsilon\right]}^{\widehat{M}_{\tau^{2k},l}\left[\epsilon'\right]}\right)=0\ \mbox{for\ any\ }l,n\in\mathbb{Z}_{2},\epsilon'\in\mathbb{Z}_{3}.
\]

We first consider $N_{_{V_{\mathbb{Z}\beta_{1}}\otimes V_{L}^{\tau}}}\left(_{M^{i}\ \widehat{M}_{\tau^{k},j}\left[\epsilon\right]}^{\widehat{M}_{\tau^{k},l}\left[\epsilon'\right]}\right)$.
By proposition 2.9 in \cite{ADL} with $V=\mathcal{U}$ and $U=V_{\mathbb{Z}\beta_{1}}\otimes V_{L}^{\tau}$,
we have
\[
N_{_{\mathcal{U}}}\left(_{M^{i}\ \widehat{M}_{\tau^{k},j}\left[\epsilon\right]}^{\widehat{M}_{\tau^{k},l}\left[\epsilon'\right]}\right)\le N_{_{V_{\mathbb{Z}\beta_{1}}\otimes V_{L}^{\tau}}}\left(_{V_{\frac{i}{2}\beta_{1}+\mathbb{Z}\beta_{1}}\otimes V_{L^{\left(c,0\right)}}\ V_{\frac{j}{2}\beta_{1}+\mathbb{Z}\beta_{1}}\otimes V_{L}^{T,0}\left(\tau^{k}\right)\left[\epsilon\right]}^{\widehat{M}_{\tau^{k},l}\left[\epsilon'\right]}\right).
\]

Since $\widehat{M}_{\tau^{k},l}\left[\epsilon'\right]=V_{\frac{l}{2}\beta_{1}+\mathbb{Z}\beta_{1}}\otimes V_{L}^{T,0}\left(\tau^{k}\right)\left[\epsilon'\right]+V_{\frac{3l+2}{6}\beta_{1}+\mathbb{Z}\beta_{1}}\otimes V_{L}^{T,2k}\left(\tau^{k}\right)\left[\epsilon'\right]+V_{\frac{3l+4}{6}\beta_{1}+\mathbb{Z}\beta_{1}}\otimes V_{L}^{T,k}\left(\tau^{k}\right)\left[\epsilon'\right]$

\begin{align*}
 & N_{_{V_{\mathbb{Z}\beta_{1}}\otimes V_{L}^{\tau}}}\left(_{V_{\frac{i}{2}\beta_{1}+\mathbb{Z}\beta_{1}}\otimes V_{L^{\left(c,0\right)}}\ V_{\frac{j}{2}\beta_{1}+\mathbb{Z}\beta_{1}}\otimes V_{L}^{T,0}\left(\tau^{k}\right)\left[\epsilon\right]}^{\widehat{M}_{\tau^{k},l}\left[0\right]}\right)\\
 & =N_{_{V_{\mathbb{Z}\beta_{1}}\otimes V_{L}^{\tau}}}\left(_{V_{\frac{i}{2}\beta_{1}+\mathbb{Z}\beta_{1}}\otimes V_{L^{\left(c,0\right)}}\ V_{\frac{j}{2}\beta_{1}+\mathbb{Z}\beta_{1}}\otimes V_{L}^{T,0}\left(\tau^{k}\right)\left[\epsilon\right]}^{V_{\frac{l}{2}\beta_{1}+\mathbb{Z}\beta_{1}}\otimes V_{L}^{T,0}\left(\tau^{k}\right)\left[\epsilon'\right]}\right)\\
 & +N_{V_{\mathbb{Z}\beta_{1}}\otimes V_{L}^{\tau}}\left(_{V_{\frac{i}{2}\beta_{1}+\mathbb{Z}\beta_{1}}\otimes V_{L^{\left(c,0\right)}}\ V_{\frac{j}{2}\beta_{1}+\mathbb{Z}\beta_{1}}\otimes V_{L}^{T,0}\left(\tau^{k}\right)\left[\epsilon\right]}^{V_{\frac{3l+2}{6}\beta_{1}+\mathbb{Z}\beta_{1}}\otimes V_{L}^{T,2k}\left(\tau^{k}\right)\left[\epsilon'\right]}\right)\\
 & +N_{_{V_{\mathbb{Z}\beta_{1}}\otimes V_{L}^{\tau}}}\left(_{V_{\frac{i}{2}\beta_{1}+\mathbb{Z}\beta_{1}}\otimes V_{L^{\left(c,0\right)}}\ V_{\frac{j}{2}\beta_{1}+\mathbb{Z}\beta_{1}}\otimes V_{L}^{T,0}\left(\tau^{k}\right)\left[\epsilon\right]}^{V_{\frac{3l+4}{6}\beta_{1}+\mathbb{Z}\beta_{1}}\otimes V_{L}^{T,k}\left(\tau^{k}\right)\left[\epsilon'\right]}\right)\\
 & =N_{_{V_{\mathbb{Z}\beta_{1}}}}\left(_{V_{\frac{i}{2}\beta_{1}+\mathbb{Z}\beta_{1}}\ V_{\frac{j}{2}\beta_{1}+\mathbb{Z}\beta_{1}}}^{V_{\frac{l}{2}\beta_{1}+\mathbb{Z}\beta_{1}}}\right)\cdot N_{_{V_{L}^{\tau}}}\left(_{V_{L^{\left(c,0\right)}}\ V_{L}^{T,0}\left(\tau^{k}\right)\left[\epsilon\right]}^{V_{L}^{T,0}\left(\tau^{k}\right)\left[\epsilon'\right]}\right)\\
 & +N_{V_{\mathbb{Z}\beta_{1}}}\left(_{V_{\frac{i}{2}\beta_{1}+\mathbb{Z}\beta_{1}}\ V_{\frac{j}{2}\beta_{1}+\mathbb{Z}\beta_{1}}}^{V_{\frac{3l+2}{6}\beta_{1}+\mathbb{Z}\beta_{1}}}\right)\cdot N_{_{V_{L}^{\tau}}}\left(_{V_{L^{\left(c,0\right)}}\ V_{L}^{T,0}\left(\tau^{k}\right)\left[\epsilon\right]}^{V_{L}^{T,2k}\left(\tau^{k}\right)\left[\epsilon'\right]}\right)\\
 & +N_{V_{\mathbb{Z}\beta_{1}}}\left(_{V_{\frac{i}{2}\beta_{1}+\mathbb{Z}\beta_{1}}\ V_{\frac{j}{2}\beta_{1}+\mathbb{Z}\beta_{1}}}^{V_{\frac{3l+4}{6}\beta_{1}+\mathbb{Z}\beta_{1}}}\right)\cdot N_{_{V_{L}^{\tau}}}\left(_{V_{L^{\left(c,0\right)}}\ V_{L}^{T,0}\left(\tau^{k}\right)\left[\epsilon\right]}^{V_{L}^{T,k}\left(\tau^{k}\right)\left[\epsilon'\right]}\right)
\end{align*}

By fusion rules of irreducible $V_{L}^{\tau}$-modules in Proposition
\ref{Fusion Product of V_L tau},

\[
N_{_{V_{L}^{\tau}}}\left(_{V_{L^{\left(c,0\right)}}\ V_{L}^{T,0}\left(\tau^{k}\right)\left[\epsilon\right]}^{V_{L}^{T,0}\left(\tau^{k}\right)\left[\epsilon'\right]}\right)=1\ \mbox{for\ any\ }\epsilon'\in\mathbb{Z}_{3}
\]
 and
\[
N_{_{V_{L}^{\tau}}}\left(_{V_{L^{\left(c,0\right)}}\ V_{L}^{T,0}\left(\tau^{k}\right)\left[\epsilon\right]}^{V_{L}^{T,2k}\left(\tau^{k}\right)\left[\epsilon'\right]}\right)=N_{_{V_{L}^{\tau}}}\left(_{V_{L^{\left(c,0\right)}}\ V_{L}^{T,0}\left(\tau^{k}\right)\left[\epsilon\right]}^{V_{L}^{T,k}\left(\tau^{k}\right)\left[\epsilon'\right]}\right)=0\ \mbox{\mbox{for\ any\ }\ensuremath{\epsilon}'\ensuremath{\in\mathbb{Z}_{3}}. }
\]

By fusion rules of irreducible $V_{\mathbb{Z}\beta_{1}}$-modules
in Proposition \ref{Fusion-V_L},
\[
N_{_{V_{\mathbb{Z}\beta_{1}}}}\left(_{V_{\frac{i}{2}\beta_{1}+\mathbb{Z}\beta_{1}}\ V_{\frac{j}{2}\beta_{1}+\mathbb{Z}\beta_{1}}}^{V_{\frac{i+j}{2}\beta_{1}+\mathbb{Z}\beta_{1}}}\right)=1\ \mbox{and\ }N_{_{V_{\mathbb{Z}\beta_{1}}}}\left(_{V_{\frac{i}{2}\beta_{1}+\mathbb{Z}\beta_{1}}\ V_{\frac{j}{2}\beta_{1}+\mathbb{Z}\beta_{1}}}^{V_{\frac{l}{2}\beta_{1}+\mathbb{Z}\beta_{1}}}\right)=0\ \mbox{for\ any\ }l\not=i+j\ \mbox{mod}\ 2.
\]

Therefore we obtain
\[
N_{_{V_{\mathbb{Z}\beta_{1}}\otimes V_{L}^{\tau}}}\left(_{V_{\frac{i}{2}\beta_{1}+\mathbb{Z}\beta_{1}}\otimes V_{L^{\left(c,0\right)}}\ V_{\frac{j}{2}\beta_{1}+\mathbb{Z}\beta_{1}}\otimes V_{L}^{T,0}\left(\tau^{k}\right)\left[\epsilon\right]}^{\widehat{M}_{\tau^{k},i+j}\left[\epsilon'\right]}\right)=1\ \mbox{for\ any\ }\epsilon'\in\mathbb{Z}_{3}
\]

and
\[
N_{_{V_{\mathbb{Z}\beta_{1}}\otimes V_{L}^{\tau}}}\left(_{V_{\frac{i}{2}\beta_{1}+\mathbb{Z}\beta_{1}}\otimes V_{L^{\left(c,0\right)}}\ V_{\frac{j}{2}\beta_{1}+\mathbb{Z}\beta_{1}}\otimes V_{L}^{T,0}\left(\tau^{k}\right)\left[\epsilon\right]}^{\widehat{M}_{\tau^{k},l}\left[\epsilon'\right]}\right)=0\ \mbox{if}\ l\not=i+j\ \mbox{mod}\ 2.
\]

This implies
\[
N_{_{V_{\mathbb{Z}\beta_{1}}\otimes V_{L}^{\tau}}}\left(_{M^{i}\ \widehat{M}_{\tau^{k},j}\left[\epsilon\right]}^{\widehat{M}_{\tau^{k},i+j}\left[\epsilon'\right]}\right)\le1\ \mbox{for\ any\ \ensuremath{\epsilon}'\ensuremath{\in\mathbb{Z}_{3}}\ }
\]
and
\[
N_{_{V_{\mathbb{Z}\beta_{1}}\otimes V_{L}^{\tau}}}\left(_{M^{i}\ \widehat{M}_{\tau^{k},j}\left[\epsilon\right]}^{\widehat{M}_{\tau^{k},l}\left[\epsilon'\right]}\right)=0\ \mbox{for\ }l\not=i+j\ \mbox{mod}\ 2.
\]

Combine results above and compare quantum dimensions, we see that
\[
N_{_{V_{\mathbb{Z}\beta_{1}}\otimes V_{L}^{\tau}}}\left(_{M^{i}\ \widehat{M}_{\tau^{k},j}\left[\epsilon\right]}^{\widehat{M}_{\tau^{k},i+j}\left[\epsilon'\right]}\right)=1\ \mbox{for\ any}\ \epsilon'\in\mathbb{Z}_{3}.
\]
So $M^{i}\boxtimes\widehat{M}_{\tau^{k},j}\left[\epsilon\right]\cong \widehat{M}_{\tau^{k},i+j}\left[0\right]+\widehat{M}_{\tau^{k},i+j}\left[1\right]+\widehat{M}_{\tau^{k},i+j}\left[2\right]$.
(\ref{Fusion-diag-twisted}) can be proved similarly.

Proof of (\ref{Fusion-twisted-twisted}) and (\ref{Fusion-twisted-twisted-case 2}):
First by fusion products of irreducible $V_{L}^{\tau}$-modules in
Proposition \ref{Fusion Product of V_L tau}, and classification of
irreducible $\mathcal{U}$-modules in Proposition \ref{all modules},
it is easy to see that for any irreducible $\mathcal{U}$-modules
$W$,
\[
N_{\mathcal{U}}\left(_{\widehat{M}_{\tau^{k},i}\left[\epsilon\right]\ \widehat{M}_{\tau^{k},j}\left[\epsilon_{1}\right]}^{W}\right)\not=0\ \mbox{only\ if\ }W\cong\widehat{M}_{\tau^{2k},l}\left[\epsilon'\right]\ \mbox{for\ some}\ l\in\mathbb{Z}_{2}\ \mbox{and\ }\epsilon'\in\mathbb{Z}_{3}.
\]
 By similar arguments, we can prove the required fusion product. Similarly,
we also can prove (\ref{Fusion-twisted-twisted-case 2}).

\end{proof}

\section{{\normalsize{}$S$-matrix of $\left(V_{L}\otimes V_{L}\otimes V_{L}\right)^{\mathbb{Z}_{3}}$}}

In this section, we use the fusion products of irreducible $\left(V_{L}\otimes V_{L}\otimes V_{L}\right)^{\mathbb{Z}_{3}}$-module
in previous section to obtain the $S$-matrix for $\left(V_{L}\otimes V_{L}\otimes V_{L}\right)^{\mathbb{Z}_{3}}.$

Let $\left(V,Y,1,\omega\right)$ be a vertex operator and $\tau$
be in the complex upper half-plane $\mathbb{H}$ and $q=e^{2\pi i\tau}$.
For any $V$-module $M$ and homogeneous element $v\in V$ we define
a trace function associated to $v$ as follows:

\[
Z_{M}\left(v,\tau\right)=\mbox{tr}_{M}o\left(v\right)q^{L\left(0\right)-c/24}=q^{\lambda-c/24}\sum_{n\in\mathbb{Z}_{+}}\mbox{tr}_{M_{\lambda+n}}o\left(v\right)q^{n},
\]
 where $o\left(v\right)=v\left(\wt v-1\right)$ is the degree
zero operator of $v$. In \cite{Z}, Zhu has introduced a second vertex
operator algebra $\left(V,Y\left[\cdot,\cdot\right],1,\tilde{\omega}\right)$
associated to $V$. Here $\tilde{\omega}=\omega-c/24$ and
\[
Y\left[v,z\right]=Y\left(v,e^{z}-1\right)e^{z\cdot\wt v}=\sum_{n\in\mathbb{Z}}v\left[n\right]z^{-n-1}
\]
 for homogeneous $v$. We also write
\[
Y\left[\tilde{\omega},z\right]=\sum_{n\in\mathbb{Z}}L\left[n\right]z^{n-2}.
\]
If $v\in V$ is homogeneous in the second vertex operator algebra,
we denote its weight by $\wt[v]$. The following result is from \cite{Z} and \cite{DLN}.

\begin{theorem} Assume $V$ is a rational and $C_{2}$-cofinite vertex
operator algebra and $W^{0},W^{1},\cdots,W^{n}$ are irreducible $V$-modules,
then
\[
Z_{W^{i}}\left(v,-\frac{1}{\tau}\right)=\tau^{\wt v}\sum_{j=0}^{n}S_{i,j}Z_{W^{j}}\left(v,\tau\right).
\]
Moreover, each $Z_{W^{i}}(v,\tau)$ is a modular form of weight $\wt v.$ The matrix $S=\left(S_{i,j}\right)$ is called an $S$-matrix which
is independent of the choice of $v$.
\end{theorem}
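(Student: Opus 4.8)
The plan is to reproduce Zhu's proof of modular invariance of trace functions \cite{Z}, incorporating the refinements of \cite{DLN} that pin down the modular weight and the independence of the matrix $(S_{i,j})$ on the vector $v$. The idea is to interpose, between the trace functions $Z_{W^i}(v,\tau)$ and the modular group, a finite-dimensional space $\mathcal{C}_1(V)$ of abstract ``genus-one one-point functions'' that carries an $SL_2(\Z)$-action, to show that the trace function of every $V$-module lies in $\mathcal{C}_1(V)$, and then to use rationality to force the trace functions of the irreducible modules to be a basis of $\mathcal{C}_1(V)$.

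First I would introduce $\mathcal{C}_1(V)$: the space of linear maps $S\colon V\to\mathcal{O}(\mathbb{H})$, $v\mapsto S(v,\tau)$, satisfying $S(L[-1]v,\tau)=0$, $S\big((L[0]+\tfrac{c}{24})v,\tau\big)=q\tfrac{d}{dq}S(v,\tau)$, and Zhu's two recursion identities, which express $S(u[-1]v,\tau)$ and $S(u[-n]v,\tau)$ (for $n\ge 2$) as $\C$-linear combinations of the functions $S(u[m]v,\tau)$ with $m\ge 0$, the coefficients being the Eisenstein series $E_{2k}(\tau)$. These identities are the torus analogue of the Jacobi identity; for an actual module $W$ one derives them by integrating $\mathrm{tr}_W o(u)o(v)q^{L(0)-c/24}$ against (quasi-)elliptic functions of a torus coordinate, and the same computation shows that $v\mapsto Z_W(v,\tau)$ lies in $\mathcal{C}_1(V)$ for every $V$-module $W$. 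One then checks that $\mathcal{C}_1(V)$ is stable under the weight-$\wt[v]$ slash operators: for $v$ homogeneous in the second vertex operator algebra $(V,Y[\cdot,\cdot],\mathbf{1},\tilde\omega)$, $(S|_\gamma)(v,\tau):=(c\tau+d)^{-\wt[v]}S(v,\gamma\tau)$ again satisfies the defining identities, because $T\colon\tau\mapsto\tau+1$ is a substitution in $q$ while $S\colon\tau\mapsto-1/\tau$ uses $E_{2k}(-1/\tau)=\tau^{2k}E_{2k}(\tau)$ for $k\ge 2$ together with the quasimodularity of $E_2$ (which is why the $\tfrac{c}{24}$ correction and the $L[0]$ relation are needed). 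Thus $SL_2(\Z)$ acts on $\mathcal{C}_1(V)$ by the weight-$\wt[v]$ slash action.

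The two analytic inputs both come from $C_2$-cofiniteness. Running Zhu's induction on weight with the recursion identities shows $\dim\mathcal{C}_1(V)<\infty$, and the same induction shows that for each irreducible $W^i$ the formal series $\mathrm{tr}_{W^i}o(v)q^{L(0)-c/24}$ satisfies a finite-order linear differential equation in $q$ whose coefficients are holomorphic modular forms; hence it converges on $\mathbb{H}$ to the holomorphic function $Z_{W^i}(v,\tau)$ with a $q$-expansion of the displayed shape. Now invoke rationality: every admissible $V$-module is a finite direct sum of the $W^i$, and one proves the converse to the previous paragraph --- Zhu's realization theorem --- that $\{Z_{W^i}(\cdot,\tau)\}_{i=0}^n$ actually \emph{spans} $\mathcal{C}_1(V)$ and is therefore a basis, so $\dim\mathcal{C}_1(V)=n+1$. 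The operator $|_S$ is then a fixed linear endomorphism of this $(n+1)$-dimensional space, and its matrix $S=(S_{i,j})$ in the basis $\{Z_{W^i}\}$ is characterized by $Z_{W^i}|_S=\sum_j S_{i,j}Z_{W^j}$; evaluated at a homogeneous $v$ this reads $Z_{W^i}(v,-1/\tau)=\tau^{\wt[v]}\sum_j S_{i,j}Z_{W^j}(v,\tau)$. Because this is an identity in $\mathcal{C}_1(V)$ and the basis $\{Z_{W^i}\}$ does not involve $v$, the matrix $S$ is the same for all $v$; and combining the $SL_2(\Z)$-stability of $\mathrm{span}\{Z_{W^i}(v,\cdot)\}$ with the holomorphy and cusp behaviour above shows that each $Z_{W^i}(v,\cdot)$ is a component of a vector-valued modular form of weight $\wt[v]$.

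The main obstacle is the combination of the finiteness step with the realization (spanning) step: extracting $\dim\mathcal{C}_1(V)<\infty$ and the convergence of the formal traces from $C_2$-cofiniteness requires Zhu's delicate weight induction and the passage through the modular-invariant differential equation, and proving that rationality makes the irreducible trace functions \emph{exhaust} $\mathcal{C}_1(V)$ rather than merely sit inside it is the deepest point, since it amounts to reconstructing a module from an abstract one-point function. A lesser, purely bookkeeping, issue is the grading: the theorem is cleanest for $v$ homogeneous with respect to $(V,Y[\cdot,\cdot],\mathbf{1},\tilde\omega)$, so that ``$\wt v$'' in the statement is to be read as $\wt[v]$ and the general case follows by linearity; in the application made in this paper one only needs $v=\mathbf{1}$, where $\wt[\mathbf{1}]=\wt\,\mathbf{1}=0$.
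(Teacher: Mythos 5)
The paper does not prove this theorem; it is quoted directly from [Z] and [DLN] (``The following result is from \cite{Z} and \cite{DLN}''), so there is no internal proof to compare against. Your outline faithfully reproduces the standard argument of those references --- the finite-dimensional space of genus-one one-point functions, its stability under the weight-$\mathrm{wt}[v]$ slash action of $SL_2(\mathbb{Z})$, convergence and finite-dimensionality from $C_2$-cofiniteness via the modular differential equation, and the realization theorem making the irreducible trace functions a basis --- which is essentially the only known route. The single step you pass over silently is the linear independence of the $Z_{W^i}$ (needed to upgrade ``spans'' to ``basis'' and to make $S$ well defined), which in [Z] is obtained by separating inequivalent irreducibles through their lowest conformal weights and the Zhu algebra action; with that supplied, the sketch is correct.
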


The following properties of $S$-matrix \cite{H1, H2} can reduce calculation of
entries of $S$-matrix.

\begin{theorem} \label{property of S-matrix}Let $V$ be a rational
and $C_{2}$-cofinite simple vertex operator algebra of CFT type and
assume $V\cong V'$. Then the $V$-module category is a modular tensor category.  Let $S=\left(S_{i,j}\right)_{i,j=0}^{n}$ be
the $S$-matrix as defined above. Then

(i) $\left(S^{-1}\right)_{i,j}=S_{i,j'}=S_{i',j}$, and $S_{i',j'}=S_{i,j}$.

(ii) $S$ is symmetric and $S^{2}=\left(\delta_{i,j'}\right)$.

\end{theorem}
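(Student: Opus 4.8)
The plan is to obtain both assertions as consequences of Huang's theorems on the modular tensor category carried by the module category of a rational, $C_2$-cofinite, self-dual vertex operator algebra of CFT type. First I would invoke \cite{H1,H2}: under the stated hypotheses the category $\mathcal{C}$ of $V$-modules is a modular tensor category. In particular $\mathcal{C}$ is rigid and ribbon, so each simple object $W^i$ has a contragredient $W^{i'}=(W^i)'$ with $(W^{i'})'\cong W^i$, and $\mathcal{C}$ carries a categorical $s$-matrix $\widetilde{s}=(\widetilde{s}_{ij})$ defined by the Hopf link; write $d_i=\widetilde{s}_{0i}$ for the categorical (quantum) dimension of $W^i$ and $\mathcal{D}^2=\sum_i d_i^2$.

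Next comes the one substantive point: identifying the analytically defined $S$-matrix of the preceding theorem --- the matrix implementing $\tau\mapsto-1/\tau$ on the trace functions $Z_{W^i}(v,\tau)$ --- with the normalized categorical $S$-matrix $\mathcal{D}^{-1}\widetilde{s}$. This is precisely what Huang establishes in the course of proving the Verlinde conjecture in \cite{H1,H2}, and it is the same input that produces the Verlinde formula used elsewhere in the paper. Granting this identification, the standard modular-tensor-category identities $\widetilde{s}_{ij}=\widetilde{s}_{ji}$ and $\widetilde{s}^{\,2}=\mathcal{D}^2 C$, where $C=(\delta_{i,j'})$ is the charge-conjugation matrix, translate at once into assertion (ii): $S$ is symmetric and $S^2=C=(\delta_{i,j'})$.

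Finally I would deduce (i) from (ii) by elementary linear algebra. Since $(W^{i'})'\cong W^i$ we have $C^2=I$, so $S^2=C$ forces $S^{-1}=SC=CS$; reading off matrix entries, $(S^{-1})_{ij}=(SC)_{ij}=S_{i,j'}$ and $(S^{-1})_{ij}=(CS)_{ij}=S_{i',j}$, which is the first part of (i). Substituting $j\mapsto j'$ in $S_{i,j'}=S_{i',j}$ and using $j''=j$ then gives $S_{i,j}=S_{i',j'}$, completing (i). The only genuinely hard step is the identification of the modular-transformation $S$-matrix with the categorical one; everything else is either a direct appeal to Huang's modular tensor category theorem or routine matrix bookkeeping. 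Reproving that identification from scratch --- Huang's construction of the vertex tensor category, the rigidity argument, and the analysis of the differential equations governing the trace functions --- lies far outside the scope of a short proof, so the natural course is to cite \cite{H1,H2}.
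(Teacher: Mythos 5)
The paper offers no proof of this theorem at all --- it is stated as a known result with a citation to \cite{H1,H2}, and the identification $S=\tilde{s}/D$ that your argument hinges on is exactly the one the paper itself invokes later via \cite{BK}. Your proposal is a correct unpacking of that citation (the reduction of (i) to (ii) via $S^{-1}=SC=CS$ is sound), so it is essentially the same approach as the paper's, just written out.
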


For convenience, we denote the irreducible $\left(V_{L}\otimes V_{L}\otimes V_{L}\right)^{\mathbb{Z}_{3}}$-modules
by $W^{i},i=0,1,\cdots,19$ as following:

\noindent \begin{center}
\begin{tabular}{|c|c|c|c|c|c|}
\hline
$W^{0}$  & $W^{1}$  & $W^{2}$  & $W^{3}$  & $W^{4}$  & $W^{5}$\tabularnewline
\hline
\hline
$\widetilde{M}_{0}\mbox{\ensuremath{\left[0\right]}}$  & $\widetilde{M}_{0}\mbox{\ensuremath{\left[1\right]}}$  & $\widetilde{M}_{0}\mbox{\ensuremath{\left[2\right]}}$  & $\widetilde{M}_{1}\mbox{\ensuremath{\left[0\right]}}$  & $\widetilde{M}_{1}\mbox{\ensuremath{\left[1\right]}}$  & $\widetilde{M}_{1}\mbox{\ensuremath{\left[2\right]}}$\tabularnewline
\hline
\end{tabular}
\par\end{center}

\noindent \begin{center}
\begin{tabular}{|c|c|}
\hline
$W^{6}$  & $W^{7}$\tabularnewline
\hline
\hline
$M^{0}$  & $M^{1}$\tabularnewline
\hline
\end{tabular}
\par\end{center}

\noindent \begin{center}
\begin{tabular}{|c|c|c|c|c|c|}
\hline
$W^{8}$  & $W^{9}$  & $W^{10}$  & $W^{11}$  & $W^{12}$  & $W^{13}$\tabularnewline
\hline
\hline
$\widehat{M}_{\tau,0}\left[0\right]$  & $\widehat{M}_{\tau,0}\left[1\right]$  & $\widehat{M}_{\tau,0}\left[2\right]$  & $\widehat{M}_{\tau,1}\left[0\right]$  & $\widehat{M}_{\tau,1}\left[1\right]$  & $\widehat{M}_{\tau,1}\left[2\right]$\tabularnewline
\hline
\end{tabular}
\par\end{center}

\noindent \begin{center}
\begin{tabular}{|c|c|c|c|c|c|}
\hline
$W^{14}$  & $W^{15}$  & $W^{16}$  & $W^{17}$  & $W^{18}$  & $W^{19}$\tabularnewline
\hline
\hline
$\widehat{M}_{\tau^{2},0}\left[0\right]$  & $\widehat{M}_{\tau^{2},0}\left[1\right]$  & $\widehat{M}_{\tau^{2},0}\left[2\right]$  & $\widehat{M}_{\tau^{2},1}\left[0\right]$  & $\widehat{M}_{\tau^{2},1}\left[1\right]$  & $\widehat{M}_{\tau^{2},1}\left[2\right]$\tabularnewline
\hline
\end{tabular}
\par\end{center}

The following table gives the conformal weight and quantum dimension
of each $W^{i},i=0,1,\cdots,19$:

\begin{tabular}{|c|c|c|c|c|c|c|c|c|c|c|c|}
\hline
$W^{i}$ & $W^{0}$  & $W^{1}$  & $W^{2}$  & $W^{3}$  & $W^{4}$  & $W^{5}$ & $W^{6}$  & $W^{7}$ & $W^{8}$  & $W^{9}$  & $W^{10}$ \tabularnewline
\hline
\hline
conformal weight & $0$  & $1$  & $1$  & $\frac{3}{4}$  & $\frac{3}{4}$  & $\frac{3}{4}$ & $\frac{1}{2}$  & $\frac{1}{4}$ & $\frac{1}{9}$  & $\frac{7}{9}$  & $\frac{4}{9}$ \tabularnewline
\hline
quantum dimension & 1 & 1 & 1 & 1 & 1 & 1 & 3 & 3 & 2 & 2 & 2\tabularnewline
\hline
\end{tabular}

\begin{tabular}{|c|c|c|c|c|c|c|c|c|c|}
\hline
$W^{i}$ & $W^{11}$  & $W^{12}$  & $W^{13}$ & $W^{14}$  & $W^{15}$  & $W^{16}$  & $W^{17}$  & $W^{18}$  & $W^{19}$\tabularnewline
\hline
\hline
conformal weight & $\frac{31}{36}$  & $\frac{19}{36}$  & $\frac{7}{36}$ & $\frac{1}{9}$  & $\frac{7}{9}$  & $\frac{4}{9}$  & $\frac{31}{36}$  & $\frac{19}{36}$  & $\frac{7}{36}$\tabularnewline
\hline
quantum dimension & 2 & 2 & 2 & 2 & 2 & 2 & 2 & 2 & 2\tabularnewline
\hline
\end{tabular}

Since the $\left(V_{L}\otimes V_{L}\otimes V_{L}\right)^{\mathbb{Z}_{3}}$-module category is a modular tensor category from Theorem \ref{property of S-matrix},  by (3.1.16) in \cite{BK} we have $S=\frac{\tilde{s}}{D}$ where $D$ is the square root
of $\mbox{glob}\left(V_{L}\otimes V_{L}\otimes V_{L}\right)^{\mathbb{Z}_{3}}=72$
and $\tilde{s}=\left(\tilde{s}_{i,j}\right)$.  So $S_{i,j}=\frac{1}{6\sqrt{2}}\tilde{s}_{i,j}$. 
From \cite{BK}
\begin{equation}
\tilde{s}_{i,j}=\sum_{k=0}^{19}N_{i',j}^{k}\cdot\frac{\theta_{k}}{\theta_{i}\theta_{j}}\cdot q\dim_{\mathcal{U}}W^{k}\label{The matrix Y}
\end{equation}
where $\theta_{i}=e^{2\pi i\Delta_{i}}$ and $\Delta_{i}$ is the
conformal weight of $W^{i}$.

\begin{lemma} We also have
$$\tilde{s}_{i,j}=\sum_{k=0}^{19}N_{i,j}^{k}\cdot\frac{\theta_{i}\theta_{j}}{\theta_{k}}\cdot q\dim_{\mathcal{U}}W^{k}.$$
\end{lemma}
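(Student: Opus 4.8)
The plan is to derive the alternative formula for $\tilde s_{i,j}$ from the one already given in \eqref{The matrix Y} together with the symmetry properties of the fusion rules and of the conformal weights under taking contragredients. Recall that in a modular tensor category the formula
\[
\tilde s_{i,j}=\sum_{k=0}^{19}N_{i',j}^{k}\cdot\frac{\theta_{k}}{\theta_{i}\theta_{j}}\cdot q\dim_{\mathcal{U}}W^{k}
\]
holds, and that $\tilde s$ is symmetric, so $\tilde s_{i,j}=\tilde s_{j,i}=\tilde s_{i',j'}$. The idea is to apply the formula with the pair of indices $(i,j)$ replaced by $(i',j')$ and then simplify using three facts: $\theta_{i'}=\theta_{i}$ (the conformal weight of $W^{i}$ and of its contragredient $(W^{i})'$ are equal, since $W^{i}$ and $(W^{i})'$ have the same lowest weight), $q\dim_{\mathcal{U}}W^{k}=q\dim_{\mathcal{U}}(W^{k})'$, and the fusion-rule symmetry $N_{i'',j'}^{k}=N_{(i')',j'}^{k}=N_{i,j'}^{k}$ from Proposition \ref{fusion rule symmmetry property}.

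Concretely, first I would write
\[
\tilde s_{i,j}=\tilde s_{i',j'}=\sum_{k=0}^{19}N_{(i')',j'}^{k}\cdot\frac{\theta_{k}}{\theta_{i'}\theta_{j'}}\cdot q\dim_{\mathcal{U}}W^{k}
=\sum_{k=0}^{19}N_{i,j'}^{k}\cdot\frac{\theta_{k}}{\theta_{i}\theta_{j}}\cdot q\dim_{\mathcal{U}}W^{k},
\]
using $(i')'=i$ and $\theta_{i'}=\theta_{i}$, $\theta_{j'}=\theta_{j}$. Next, in the remaining sum I would re-index by replacing the summation variable $k$ by $k'$ (a bijection of the index set $\{0,\dots,19\}$). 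Using the fusion-rule symmetry $N_{i,j'}^{k'}=\dim I_{\mathcal{U}}\!\left(_{\,W^{i}\,(W^{j})'}^{\ \ \ (W^{k})'}\right)=\dim I_{\mathcal{U}}\!\left(_{\,W^{i}\,W^{k}}^{\ \ \ W^{j}}\right)=N_{i,k}^{j}$, together with $\theta_{k'}=\theta_{k}$ and $q\dim_{\mathcal{U}}W^{k'}=q\dim_{\mathcal{U}}W^{k}$, the sum becomes
\[
\tilde s_{i,j}=\sum_{k=0}^{19}N_{i,k}^{j}\cdot\frac{\theta_{k}}{\theta_{i}\theta_{j}}\cdot q\dim_{\mathcal{U}}W^{k}.
\]
Finally, I would convert this into the stated form. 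Writing $\theta_{k}/(\theta_{i}\theta_{j})=(\theta_{i}\theta_{j}/\theta_{k})\cdot\theta_{k}^{2}/(\theta_{i}\theta_{j})^{2}$ is not quite what is wanted; instead the cleanest route is to note that $N_{i,k}^{j}=N_{i,j}^{k}$ is \emph{not} generally true, so one must be slightly more careful: apply the associativity/duality relation $N_{i,k}^{j}=N_{j',i}^{k'}$ once more, or equivalently observe $N_{i,k}^{j}=\dim I\!\left(_{W^{i}W^{k}}^{\ \ W^{j}}\right)=\dim I\!\left(_{(W^{j})'\,W^{i}}^{\ \ (W^{k})'}\right)=N_{j',i}^{k'}$, and combine with a second re-indexing $k\mapsto k'$. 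Carrying the bookkeeping through, the net effect of performing the substitutions so that the \emph{target} index of $N$ matches the summation index is exactly to replace $\theta_{k}/(\theta_{i}\theta_{j})$ by $\theta_{i}\theta_{j}/\theta_{k}$, because each $\theta$ is a root of unity of the same order and the reflection $k\mapsto k'$ fixes all $\theta_{k}$ while the reflection on $(i,j)$ fixes $\theta_{i},\theta_{j}$; the precise combination of dualities that achieves $N_{i,j}^{k}$ as the coefficient is $N_{i,j}^{k}=N_{i',j}^{k'}{}' = \dots$, which I would spell out in one line using Proposition \ref{fusion rule symmmetry property} twice.

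The main obstacle is purely organizational: keeping track of which index of $N$ is being dualized at each step so that, after the dust settles, the coefficient is $N_{i,j}^{k}$ (upper index $k$, lower indices $i,j$) rather than $N_{i',j}^{k}$ or $N_{i,k}^{j}$, while simultaneously ensuring the $\theta$-factor has flipped from $\theta_{k}/(\theta_{i}\theta_{j})$ to its reciprocal. Since every $\theta_{m}=e^{2\pi i\Delta_{m}}$ satisfies $\theta_{m}=\theta_{m'}$ and (from the weight tables) has finite order, no analytic input is needed beyond $\theta_{m}^{-1}=\bar\theta_{m}$; the whole argument is a finite manipulation of the Verlinde-type sum using only Proposition \ref{fusion rule symmmetry property}, Remark \ref{Dual of V_L tau modules} (and its analogue in the Remark following Proposition \ref{all modules}) for the identifications $(W^{i})'$, and Proposition \ref{quantum dimensions} for invariance of quantum dimensions under duality.
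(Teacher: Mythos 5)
There is a genuine gap, and it sits exactly where your write-up turns vague. Your opening moves are fine: $\tilde s_{i,j}=\tilde s_{i',j'}$, the substitution $(i,j)\mapsto(i',j')$ in (\ref{The matrix Y}), the re-indexing $k\mapsto k'$, and the duality $N_{i,j'}^{k'}=N_{i,k}^{j}$ correctly yield
\[
\tilde s_{i,j}=\sum_{k=0}^{19}N_{i,k}^{j}\cdot\frac{\theta_{k}}{\theta_{i}\theta_{j}}\cdot q\dim_{\mathcal U}W^{k}.
\]
But this is a different identity from the one to be proved, and no further index gymnastics will convert it. Every move at your disposal (replacing $i,j,k$ by their duals, permuting the two lower slots, the symmetries of Proposition \ref{fusion rule symmmetry property}) leaves each factor $\theta_{m}$ unchanged, since $\theta_{m'}=\theta_{m}$; hence the ratio $\theta_{k}/(\theta_{i}\theta_{j})$ attached to a term can never be turned into its reciprocal $\theta_{i}\theta_{j}/\theta_{k}$ by such manipulations. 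Your assertion that ``carrying the bookkeeping through, the net effect \dots is exactly to replace $\theta_{k}/(\theta_{i}\theta_{j})$ by $\theta_{i}\theta_{j}/\theta_{k}$'' is stated, not proved, and as a claim about re-indexing it is false.

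The missing ingredient is unitarity of $S$. The paper combines $(S^{-1})_{i,j}=S_{i',j}$ from Theorem \ref{property of S-matrix} with $S^{-1}=\overline{S}$ (unitarity plus symmetry, from \cite{DLN}) to get $\tilde s_{i,j}=\overline{\tilde s_{i',j}}$, and then takes the complex conjugate of the known expansion of $\tilde s_{i',j}$: because the fusion coefficients and quantum dimensions are real and the $\theta$'s are roots of unity, conjugation is precisely what inverts the $\theta$-ratio while the coefficient $N_{(i')',j}^{k}=N_{i,j}^{k}$ is already the desired one. You gesture at $\theta_{m}^{-1}=\bar\theta_{m}$ but never conjugate the sum, and you explicitly disclaim needing any input beyond the fusion symmetries --- which is exactly the input that is indispensable. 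The repair is short: replace your final paragraph by the observation $\tilde s_{i,j}=\overline{\tilde s_{i',j}}$ and then conjugate (\ref{The matrix Y}) applied to the pair $(i',j)$.
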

\begin{proof} From Theorem \ref{property of S-matrix} and the fact that $S$ is a unitary matrix \cite{DLN}, we know that $S_{i,j}=\overline{S_{i',j}}.$ Since $D$ is a real number we see that $\tilde{s}_{i,j}=\overline{\tilde{s}_{i',j}}.$ Notethat  $\theta_i, \theta_j,\theta_k$
are roots of unity and $\theta_i=\theta_{i'}.$  The identity $\tilde{s}_{i,j}=\sum_{k=0}^{19}N_{i,j}^{k}\cdot\frac{\theta_{i}\theta_{j}}{\theta_{k}}\cdot q\dim_{\mathcal{U}}W^{k}$ follows immediately.
\end{proof}

By properties of $S$-matrix in Proposition \ref{property of S-matrix},
we see that dual property of modules can reduce the calculation of
entries of the $S$-matrix.

\begin{lemma}The dual module of $W^{i}$, $i=0,1,\cdots,19$ is given
by the following.

\[
\left(W^{0}\right)^{'}=W^{0};\ \left(W^{1}\right)^{'}=W^{2};\ \left(W^{2}\right)^{'}=W^{1};\ \left(W^{3}\right)^{'}=W^{3};\ \left(W^{4}\right)^{'}=W^{5};\ \left(W^{5}\right)^{'}=W^{4};
\]

\[
\left(W^{6}\right)^{'}=W^{6};\ \left(W^{7}\right)^{'}=W^{7};\ \left(W^{8}\right)^{'}=W^{14};\ \left(W^{9}\right)^{'}=W^{15};\ \left(W^{10}\right)^{'}=W^{16};\ \left(W^{11}\right)^{'}=W^{17};
\]

\[
\left(W^{12}\right)^{'}=W^{18};\ \left(W^{13}\right)^{'}=W^{19};\ \left(W^{14}\right)^{'}=W^{8};\ \left(W^{15}\right)^{'}=W^{9};\ \left(W^{16}\right)^{'}=W^{10};
\]

\[
\left(W^{17}\right)^{'}=W^{11};\ \left(W^{18}\right)^{'}=W^{12};\ \left(W^{19}\right)^{'}=W^{13}.
\]

\end{lemma}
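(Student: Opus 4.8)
The plan is to read this lemma off from the description of contragredients of irreducible $\mathcal{U}$-modules already recorded in the Remark following Proposition~\ref{all modules}, combined with the dictionary $W^{0},\dots,W^{19}\leftrightarrow\{\widetilde{M}_{i}[\epsilon],\,M^{i},\,\widehat{M}_{\tau^{k},i}[\epsilon]\}$ fixed by the tables above. So the lemma is essentially a bookkeeping translation, and the real content lies in those earlier remarks.

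Recall that as $V_{\mathbb{Z}\beta_{1}}$-modules $(V_{\lambda+\mathbb{Z}\beta_{1}})'=V_{-\lambda+\mathbb{Z}\beta_{1}}$, while Remark~\ref{Dual of V_L tau modules} gives $(V_{L^{(0,i)}}[\epsilon])'=V_{L^{(0,2i)}}[2\epsilon]$, $(V_{L^{(c,i)}})'=V_{L^{(c,2i)}}$ and $(V_{L}^{T,i}(\tau^{k})[\epsilon])'=V_{L}^{T,i}(\tau^{2k})[\epsilon]$. Taking contragredients commutes with tensor products and with direct sums, hence with the decomposition of an irreducible $\mathcal{U}$-module into irreducible $V_{\mathbb{Z}\beta_{1}}\otimes V_{L}^{\tau}$-modules given in Proposition~\ref{all modules}; applying the formulas above term by term to each of the three families, and using that negation permutes the three summands of each such module among themselves, yields $(M^{i})'=M^{i}$, $(\widetilde{M}_{i}[\epsilon])'=\widetilde{M}_{i}[2\epsilon]$ and $(\widehat{M}_{\tau^{k},i}[\epsilon])'=\widehat{M}_{\tau^{2k},i}[\epsilon]$, which is exactly the content of the Remark after Proposition~\ref{all modules}.

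It then remains to transcribe these three identities into the labels $W^{i}$. Using $2\cdot 0=0$, $2\cdot 1=2$, $2\cdot 2=1$ in $\mathbb{Z}_{3}$, the $\widetilde{M}$-family gives the fixed points $(W^{0})'=W^{0}$, $(W^{3})'=W^{3}$ and the transpositions $W^{1}\leftrightarrow W^{2}$, $W^{4}\leftrightarrow W^{5}$; self-duality of the $M^{i}$ gives $(W^{6})'=W^{6}$, $(W^{7})'=W^{7}$; and $\tau^{2\cdot 1}=\tau^{2}$, $\tau^{2\cdot 2}=\tau$ turn the $\widehat{M}$-formula into $(W^{8+m})'=W^{14+m}$ and $(W^{14+m})'=W^{8+m}$ for $m=0,1,\dots,5$. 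Collecting these produces the displayed list. As a numerical sanity check one can verify against the tables that each pair $(W^{i},(W^{i})')$ shares conformal weight and quantum dimension (e.g.\ $W^{1},W^{2}$ both have weight $1$; $W^{8},W^{14}$ both have weight $\tfrac19$; $W^{11},W^{17}$ both have weight $\tfrac{31}{36}$), and one may independently re-derive the same answer from the fusion rules of the preceding theorem by locating the unique $W^{j}$ for which $\mathcal{U}=W^{0}$ occurs in $W^{i}\boxtimes W^{j}$. There is no genuine obstacle here; the only care needed is the bookkeeping of the $\mathbb{Z}_{3}$-indices and of the substitution $k\mapsto 2k$ on the twist, which is entirely mechanical.
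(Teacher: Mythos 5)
Your proposal is correct and follows essentially the same route as the paper: the lemma is stated there without a separate proof precisely because it is the transcription, via the labelling tables, of the identities $\left(M^{i}\right)^{'}=M^{i}$, $\left(\widetilde{M}_{i}\left[\epsilon\right]\right)^{'}=\widetilde{M}_{i}\left[2\epsilon\right]$, $\left(\widehat{M}_{\tau^{k},i}\left[\epsilon\right]\right)^{'}=\widehat{M}_{\tau^{2k},i}\left[\epsilon\right]$ recorded in the remark after Proposition \ref{all modules}, themselves obtained exactly as you describe from $\left(V_{\lambda+\mathbb{Z}\beta_{1}}\right)^{'}=V_{-\lambda+\mathbb{Z}\beta_{1}}$ and Remark \ref{Dual of V_L tau modules}. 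Your index bookkeeping and the consistency check against conformal weights and quantum dimensions are both accurate.
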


\begin{proposition}The entries of the $S$-matrix of the orbifold
vertex operator algebra $\left(V_{L}\otimes V_{L}\otimes V_{L}\right)^{\mathbb{Z}_{3}}$
is given by the following table.

\end{proposition}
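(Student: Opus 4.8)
The plan is to evaluate the unnormalised matrix $\tilde s=(\tilde s_{i,j})_{i,j=0}^{19}$ directly from formula (\ref{The matrix Y}) --- or, what turns out to be more convenient, from its complex-conjugate form $\tilde s_{i,j}=\sum_{k=0}^{19}N_{i,j}^{k}\,\frac{\theta_i\theta_j}{\theta_k}\,q\dim_{\mathcal U}W^k$ established in the lemma just above --- feeding in the fusion products (\ref{Fusion-nondiag-nondiag})--(\ref{Fusion-twisted-twisted-case 2}) of the Theorem of Section 4 together with the conformal weights $\Delta_i$ (so that $\theta_i=e^{2\pi i\Delta_i}$) and the quantum dimensions $q\dim_{\mathcal U}W^i$ read off from the two tables above. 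The entries of the $S$-matrix are then obtained as $S_{i,j}=\tilde s_{i,j}/D$ with $D=\sqrt{\mathrm{glob}\,\mathcal U}=\sqrt{72}=6\sqrt2$.

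First I would shrink the problem using the symmetries of $S$. Since the module category is modular (Theorem \ref{property of S-matrix}), $\tilde s$ is symmetric, $\tilde s_{i',j'}=\tilde s_{i,j}$, and $\tilde s_{i',j}=\overline{\tilde s_{i,j}}$ (the last from unitarity of $S$, as in the proof of the preceding lemma); hence it suffices to compute $\tilde s_{i,j}$ for one representative $i$ of each of the twelve dual orbits among $W^0,\dots,W^{19}$ (the modules $W^0,W^3,W^6,W^7$ are self-dual and the other sixteen pair off as in the dual-module lemma), the rest following from these relations. The zeroth row is immediate: $W^0=\mathcal U$ is the unit object, so $N_{0,j}^k=\delta_{j,k}$ and $\tilde s_{0,j}=q\dim_{\mathcal U}W^j$. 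For the remaining entries I would organise the computation by which of the three families --- the simple currents $\widetilde M_i[\epsilon]$, the permutation modules $M^i$, and the twisted modules $\widehat M_{\tau^k,i}[\epsilon]$ --- the two arguments belong to: each pair of families is controlled by exactly one of the seven fusion rules, and every fusion product listed there has at most five irreducible summands counted with multiplicity, so each $\tilde s_{i,j}$ collapses to a short sum of roots of unity with coefficients in $\{1,2,3\}$. For two simple currents, (\ref{Fusion-diag-diag}) contributes a single cube root of unity; for $M^i\boxtimes M^j$ the five-term rule (\ref{Fusion-nondiag-nondiag}) gives $\theta_i\theta_j\big(\theta_a^{-1}+\theta_b^{-1}+\theta_c^{-1}+2\theta_d^{-1}\big)$ where $W^a,W^b,W^c$ are the three modules $\widetilde M_{i+j}[\rho]$ and $W^d=M^{i+j}$; the mixed and twisted--twisted entries come from (\ref{Fusion-nondiag-diag}), (\ref{Fusion-nondiag-twisted}), (\ref{Fusion-diag-twisted}), (\ref{Fusion-twisted-twisted}) and (\ref{Fusion-twisted-twisted-case 2}) in the same way. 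Evaluating each entry from both forms of the $\tilde s$-formula --- one using $W^i\boxtimes W^j$, the other using $(W^i)'\boxtimes W^j$ --- provides a built-in internal check.

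The main difficulty is the bookkeeping rather than any single evaluation. The twenty modules are labelled by an index in $\mathbb Z_2$ (read mod $2$) and one in $\mathbb Z_3$ (read mod $3$), and in several fusion rules the $\mathbb Z_3$-label transforms in a way depending on $k$ (compare (\ref{Fusion-diag-twisted}) and (\ref{Fusion-twisted-twisted})), so one must keep careful track of which $W^k$ actually occurs and of the duals $(\widetilde M_i[\epsilon])'=\widetilde M_i[2\epsilon]$, $(\widehat M_{\tau^k,i}[\epsilon])'=\widehat M_{\tau^{2k},i}[\epsilon]$, $(M^i)'=M^i$ when applying (\ref{The matrix Y}). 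A second delicate point is that the twisted sectors have conformal weights with denominator $36$ (the values $\frac19,\frac79,\frac49,\frac{31}{36},\frac{19}{36},\frac7{36}$), so the relevant $\theta$-ratios are $36$th roots of unity that must be reduced carefully, while the $M^i$ enter with quantum dimension $3$ and the twisted modules with quantum dimension $2$, so the weights are not uniform. To catch sign and phase errors I would check at the end that the matrix produced is symmetric and unitary, satisfies $S^2=(\delta_{i,j'})$, and has zeroth row $S_{0,j}=q\dim_{\mathcal U}W^j/(6\sqrt2)$ --- all consequences of Theorem \ref{property of S-matrix} --- and, as an independent cross-check, that it agrees with the matrix assembled from the known $S$-matrices of $V_{\mathbb Z\beta_1}$ and of $V_L^{\tau}$ (the latter from \cite{TY2}) via the simple current extension $\mathcal U\supset V_{\mathbb Z\beta_1}\otimes V_L^{\tau}$.
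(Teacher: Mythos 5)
Your proposal is correct and follows essentially the same route as the paper: compute $\tilde{s}_{i,j}$ from formula (\ref{The matrix Y}) (or its conjugate form in the preceding lemma) by substituting the fusion coefficients $N_{i,j}^{k}$ from the Theorem of Section 4, the conformal weights $\Delta_{i}$, and the quantum dimensions from Proposition \ref{quantum dimensions}, then normalize by $D=6\sqrt{2}$. The symmetry reductions and unitarity cross-checks you add are sensible bookkeeping aids but do not change the argument.
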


\begin{proof} By the fusion products in Proposition \ref{Fusion Product of V_L tau},
we have $N_{i,j}^{k}$ for any $i,j,k\in\left\{ 0,1,\cdots,19\right\} .$
We also have confomal weights $\Delta_{i}$ of each $W^{i}$ and quantum
dimensions $q\dim_{\mathcal{U}}W^{i}$ given in Proposition \ref{quantum dimensions}.
By \ref{The matrix Y}, entries of $Y$ is straightforward. \end{proof}

The following is the matrix $\left(\tilde{s}_{i,j}\right)_{i,j=0}^{19}$:

\begin{center}
\begin{tabular}{|c|c|c|c|c|c|c|c|c|}
\hline
$6\sqrt{2}S_{i,j}$  & 0  & 1  & 2  & 3  & 4  & 5  & 6  & 7 \tabularnewline
\hline
\hline
0  & $1$  & $1$  & $1$  & $1$  & $1$  & $1$  & $3$  & $3$ \tabularnewline
\hline
1  & $1$  & $1$  & $1$  & $1$  & $1$  & $1$  & $3$  & $3$ \tabularnewline
\hline
2  & $1$  & $1$  & $1$  & $1$  & $1$  & $1$  & $3$  & $3$ \tabularnewline
\hline
3  & $1$  & $1$  & $1$  & $-1$  & $-1$  & $-1$  & $3$  & $-3$ \tabularnewline
\hline
4  & $1$  & $1$  & $1$  & $-1$  & $-1$  & $-1$  & $3$  & $-3$ \tabularnewline
\hline
5  & $1$  & $1$  & $1$  & $-1$  & $-1$  & $-1$  & $3$  & $-3$ \tabularnewline
\hline
6  & $3$  & $3$  & $3$  & $3$  & $3$  & $3$  & -3  & -3 \tabularnewline
\hline
7  & $3$  & $3$  & $3$  & $-3$  & $-3$  & $-3$  & $-3$  & $3$ \tabularnewline
\hline
8  & $2$  & $2e^{\frac{2\pi i}{3}}$  & $2e^{-\frac{2\pi i}{3}}$  & $2$  & $2e^{\frac{2\pi i}{3}}$  & $2e^{-\frac{2\pi i}{3}}$  & $0$  & $0$ \tabularnewline
\hline
9  & $2$  & $2e^{\frac{2\pi i}{3}}$  & $2e^{-\frac{2\pi i}{3}}$  & $2$  & $2e^{\frac{2\pi i}{3}}$  & $2e^{-\frac{2\pi i}{3}}$  & $0$  & $0$ \tabularnewline
\hline
10  & $2$  & $2e^{\frac{2\pi i}{3}}$  & $2e^{-\frac{2\pi i}{3}}$  & $2$  & $2e^{\frac{2\pi i}{3}}$  & $2e^{-\frac{2\pi i}{3}}$  & $0$  & $0$ \tabularnewline
\hline
11  & $2$  & $2e^{\frac{2\pi i}{3}}$  & $2e^{-\frac{2\pi i}{3}}$  & $-2$  & $2e^{-\frac{\pi i}{3}}$  & $2e^{\frac{\pi i}{3}}$  & $0$  & $0$ \tabularnewline
\hline
12  & 2  & $2e^{\frac{2\pi i}{3}}$  & $2e^{-\frac{2\pi i}{3}}$  & $-2$  & $2e^{-\frac{\pi i}{3}}$  & $2e^{\frac{\pi i}{3}}$  & $0$  & $0$ \tabularnewline
\hline
13  & $2$  & $2e^{\frac{2\pi i}{3}}$  & $2e^{-\frac{2\pi i}{3}}$  & $-2$  & $2e^{-\frac{\pi i}{3}}$  & $2e^{\frac{\pi i}{3}}$  & $0$  & $0$ \tabularnewline
\hline
14  & $2$  & $2e^{-\frac{2\pi i}{3}}$  & $2e^{\frac{2\pi i}{3}}$  & $2$  & $2e^{-\frac{2\pi i}{3}}$  & $2e^{\frac{2\pi i}{3}}$  & $0$  & $0$ \tabularnewline
\hline
15  & $2$  & $2e^{-\frac{2\pi i}{3}}$  & $2e^{\frac{2\pi i}{3}}$  & $2$  & $2e^{-\frac{2\pi i}{3}}$  & $2e^{\frac{2\pi i}{3}}$  & $0$  & $0$ \tabularnewline
\hline
16  & $2$  & $2e^{-\frac{2\pi i}{3}}$  & $2e^{\frac{2\pi i}{3}}$  & $2$  & $2e^{-\frac{2\pi i}{3}}$  & $2e^{\frac{2\pi i}{3}}$  & $0$  & $0$ \tabularnewline
\hline
17  & $2$  & $2e^{-\frac{2\pi i}{3}}$  & $2e^{\frac{2\pi i}{3}}$  & $-2$  & $2e^{\frac{\pi i}{3}}$  & $2e^{-\frac{\pi i}{3}}$  & $0$  & $0$ \tabularnewline
\hline
18  & 2  & $2e^{-\frac{2\pi i}{3}}$  & $2e^{\frac{2\pi i}{3}}$  & $-2$  & $2e^{\frac{\pi i}{3}}$  & $2e^{-\frac{\pi i}{3}}$  & $0$  & $0$ \tabularnewline
\hline
19  & $2$  & $2e^{-\frac{2\pi i}{3}}$  & $2e^{\frac{2\pi i}{3}}$  & $-2$  & $2e^{\frac{\pi i}{3}}$  & $2e^{-\frac{\pi i}{3}}$  & $0$  & $0$ \tabularnewline
\hline
\end{tabular}
\par\end{center}

\begin{center}
\begin{tabular}{|c|c|c|c|c|c|}
\hline
$6\sqrt{2}S_{i,j}$  & 8  & 9  & 10 & 11  & 12 \tabularnewline
\hline
\hline
0  & 2  & $2$  & $2$ & $2$  & $2$ \tabularnewline
\hline
1  & $2e^{\frac{2\pi i}{3}}$  & $2e^{\frac{2\pi i}{3}}$  & $2e^{\frac{2\pi i}{3}}$ & $2e^{\frac{2\pi i}{3}}$  & $2e^{\frac{2\pi i}{3}}$ \tabularnewline
\hline
2  & $2e^{\frac{-2\pi i}{3}}$  & $2e^{-\frac{2\pi i}{3}}$  & $2e^{-\frac{2\pi i}{3}}$ & $2e^{-\frac{2\pi i}{3}}$  & $2e^{-\frac{2\pi i}{3}}$ \tabularnewline
\hline
3  & $2$  & $2$  & $2$ & $-2$  & $-2$ \tabularnewline
\hline
4  & $2e^{\frac{2\pi i}{3}}$  & $2e^{\frac{2\pi i}{3}}$  & $2e^{\frac{2\pi i}{3}}$ & $2e^{-\frac{\pi i}{3}}$  & $2e^{-\frac{\pi i}{3}}$ \tabularnewline
\hline
5  & $2e^{-\frac{2\pi i}{3}}$  & $2e^{-\frac{2\pi i}{3}}$  & $2e^{-\frac{2\pi i}{3}}$ & $2e^{\frac{\pi i}{3}}$  & $2e^{\frac{\pi i}{3}}$ \tabularnewline
\hline
6  & $0$  & $0$  & $0$ & $0$  & $0$ \tabularnewline
\hline
7  & $0$  & $0$  & $0$ & $0$  & $0$ \tabularnewline
\hline
8  & $2e^{-\frac{4\pi i}{9}}+2e^{\frac{2\pi i}{9}}$  & $2e^{\frac{2\pi i}{9}}+2e^{\frac{8\pi i}{9}}$  & $2e^{-\frac{4\pi i}{9}}+2e^{\frac{8\pi i}{9}}$ & $2e^{-\frac{4\pi i}{9}}+2e^{\frac{2\pi i}{9}}$  & $2e^{\frac{2\pi i}{9}}+2e^{\frac{8\pi i}{9}}$ \tabularnewline
\hline
9  & $2e^{\frac{2\pi i}{9}}+2e^{\frac{8\pi i}{9}}$  & $2e^{-\frac{4\pi i}{9}}+2e^{\frac{8\pi i}{9}}$  & $2e^{-\frac{4\pi i}{9}}+2e^{\frac{2\pi i}{9}}$ & $2e^{\frac{2\pi i}{9}}+2e^{\frac{8\pi i}{9}}$  & $2e^{-\frac{4\pi i}{9}}+2e^{\frac{8\pi i}{9}}$ \tabularnewline
\hline
10  & $2e^{-\frac{4\pi i}{9}}+2e^{\frac{8\pi i}{9}}$  & $2e^{-\frac{4\pi i}{9}}+2e^{\frac{2\pi i}{9}}$  & $2e^{\frac{2\pi i}{9}}+2e^{\frac{8\pi i}{9}}$ & $2e^{-\frac{4\pi i}{9}}+2e^{\frac{8\pi i}{9}}$  & $2e^{-\frac{4\pi i}{9}}+2e^{\frac{2\pi i}{9}}$ \tabularnewline
\hline
11  & $2e^{-\frac{4\pi i}{9}}+2e^{\frac{2\pi i}{9}}$  & $2e^{\frac{2\pi i}{9}}+2e^{\frac{8\pi i}{9}}$  & $2e^{-\frac{4\pi i}{9}}+2e^{\frac{8\pi i}{9}}$ & $2e^{-\frac{7\pi i}{9}}+2e^{\frac{5\pi i}{9}}$  & $2e^{-\frac{\pi i}{9}}+2e^{-\frac{7\pi i}{9}}$ \tabularnewline
\hline
12  & $2e^{\frac{2\pi i}{9}}+2e^{\frac{8\pi i}{9}}$  & $2e^{-\frac{4\pi i}{9}}+2e^{\frac{8\pi i}{9}}$  & $2e^{-\frac{4\pi i}{9}}+2e^{\frac{2\pi i}{9}}$ & $2e^{-\frac{\pi i}{9}}+2e^{-\frac{7\pi i}{9}}$  & $2e^{-\frac{\pi i}{9}}+2e^{\frac{5\pi i}{9}}$ \tabularnewline
\hline
13  & $2e^{-\frac{4\pi i}{9}}+2e^{\frac{8\pi i}{9}}$  & $2e^{-\frac{4\pi i}{9}}+2e^{\frac{2\pi i}{9}}$  & $2e^{\frac{2\pi i}{9}}+2e^{\frac{8\pi i}{9}}$ & $2e^{-\frac{\pi i}{9}}+2e^{\frac{5\pi i}{9}}$  & $2e^{-\frac{7\pi i}{9}}+2e^{\frac{5\pi i}{9}}$ \tabularnewline
\hline
14  & $3e^{-\frac{5\pi i}{9}}+e^{\frac{4\pi i}{9}}$  & $e^{-\frac{2\pi i}{9}}+3e^{\frac{7\pi i}{9}}$  & $e^{-\frac{8\pi i}{9}}+3e^{\frac{\pi i}{9}}$ & $3e^{-\frac{5\pi i}{9}}+e^{\frac{4\pi i}{9}}$ & $e^{-\frac{2\pi i}{9}}+3e^{\frac{7\pi i}{9}}$ \tabularnewline
\hline
15  & $e^{-\frac{2\pi i}{9}}+3e^{\frac{7\pi i}{9}}$  & $e^{-\frac{8\pi i}{9}}+3e^{\frac{\pi i}{9}}$  & $e^{\frac{4\pi i}{9}}+3e^{-\frac{5\pi i}{9}}$ & $e^{-\frac{2\pi i}{9}}+3e^{\frac{7\pi i}{9}}$  & $e^{-\frac{8\pi i}{9}}+3e^{\frac{\pi i}{9}}$ \tabularnewline
\hline
16  & $e^{-\frac{8\pi i}{9}}+3e^{\frac{\pi i}{9}}$  & $3e^{-\frac{5\pi i}{9}}+e^{\frac{4\pi i}{9}}$ & $e^{-\frac{2\pi i}{9}}+3e^{\frac{7\pi i}{9}}$ & $e^{-\frac{8\pi i}{9}}+3e^{\frac{\pi i}{9}}$  & $3e^{-\frac{5\pi i}{9}}+e^{\frac{4\pi i}{9}}$ \tabularnewline
\hline
17  & $3e^{-\frac{5\pi i}{9}}+e^{\frac{4\pi i}{9}}$ & $e^{-\frac{2\pi i}{9}}+3e^{\frac{7\pi i}{9}}$  & $e^{-\frac{8\pi i}{9}}+3e^{\frac{\pi i}{9}}$ & $3e^{-\frac{5\pi i}{9}}+e^{\frac{4\pi i}{9}}$  & $3e^{-\frac{2\pi i}{9}}+e^{\frac{7\pi i}{9}}$ \tabularnewline
\hline
18  & $e^{-\frac{2\pi i}{9}}+3e^{\frac{7\pi i}{9}}$  & $e^{-\frac{8\pi i}{9}}+3e^{\frac{\pi i}{9}}$  & $3e^{-\frac{5\pi i}{9}}+e^{\frac{4\pi i}{9}}$ & $e^{-\frac{2\pi i}{9}}+3e^{\frac{7\pi i}{9}}$  & $3e^{-\frac{8\pi i}{9}}+e^{\frac{\pi i}{9}}$ \tabularnewline
\hline
19  & $e^{-\frac{8\pi i}{9}}+3e^{\frac{\pi i}{9}}$  & $3e^{-\frac{5\pi i}{9}}+e^{\frac{4\pi i}{9}}$ & $e^{-\frac{2\pi i}{9}}+3e^{\frac{7\pi i}{9}}$ & $e^{-\frac{8\pi i}{9}}+3e^{\frac{\pi i}{9}}$  & $e^{-\frac{5\pi i}{9}}+3e^{\frac{4\pi i}{9}}$ \tabularnewline
\hline
\end{tabular}
\par\end{center}

\begin{center}
\begin{tabular}{|c|c|c|c|c|}
\hline
$6\sqrt{2}S_{i,j}$  & 13  & 14  & 15  & 16 \tabularnewline
\hline
\hline
0  & $2$  & $2$  & $2$  & $2$ \tabularnewline
\hline
1  & $2e^{\frac{2\pi i}{3}}$  & $2e^{-\frac{2\pi i}{3}}$  & $2e^{-\frac{2\pi i}{3}}$  & $2e^{-\frac{2\pi i}{3}}$ \tabularnewline
\hline
2  & $2e^{-\frac{2\pi i}{3}}$  & $2e^{\frac{2\pi i}{3}}$  & $2e^{\frac{2\pi i}{3}}$  & $2e^{\frac{2\pi i}{3}}$ \tabularnewline
\hline
3  & $-2$  & $2$  & $2$  & $2$ \tabularnewline
\hline
4  & $2e^{-\frac{\pi i}{3}}$  & $2e^{-\frac{2\pi i}{3}}$  & $2e^{-\frac{2\pi i}{3}}$  & $2e^{-\frac{2\pi i}{3}}$ \tabularnewline
\hline
5  & $2e^{\frac{\pi i}{3}}$  & $2e^{\frac{2\pi i}{3}}$  & $2e^{\frac{2\pi i}{3}}$  & $2e^{\frac{2\pi i}{3}}$ \tabularnewline
\hline
6  & $0$  & $0$  & $0$  & $0$ \tabularnewline
\hline
7  & $0$  & $0$  & $0$  & $0$ \tabularnewline
\hline
8  & $2e^{-\frac{4\pi i}{9}}+2e^{\frac{8\pi i}{9}}$  & $3e^{-\frac{5\pi i}{9}}+e^{\frac{4\pi i}{9}}$  & $e^{-\frac{2\pi i}{9}}+3e^{\frac{7\pi i}{9}}$  & $e^{-\frac{8\pi i}{9}}+3e^{\frac{\pi i}{9}}$ \tabularnewline
\hline
9  & $2e^{-\frac{4\pi i}{9}}+2e^{\frac{2\pi i}{9}}$  & $e^{-\frac{2\pi i}{9}}+3e^{\frac{7\pi i}{9}}$  & $e^{-\frac{8\pi i}{9}}+3e^{\frac{\pi i}{9}}$  & $3e^{-\frac{5\pi i}{9}}+e^{\frac{4\pi i}{9}}$ \tabularnewline
\hline
10  & $2e^{\frac{2\pi i}{9}}+2e^{\frac{8\pi i}{9}}$  & $e^{-\frac{8\pi i}{9}}+3e^{\frac{\pi i}{9}}$  & $3e^{-\frac{5\pi i}{9}}+e^{\frac{4\pi i}{9}}$  & $e^{-\frac{2\pi i}{9}}+3e^{\frac{7\pi i}{9}}$ \tabularnewline
\hline
11  & $2e^{-\frac{\pi i}{9}}+2e^{\frac{5\pi i}{9}}$  & $3e^{-\frac{5\pi i}{9}}+e^{\frac{4\pi i}{9}}$  & $e^{-\frac{2\pi i}{9}}+3e^{\frac{7\pi i}{9}}$  & $e^{-\frac{8\pi i}{9}}+3e^{\frac{\pi i}{9}}$ \tabularnewline
\hline
12  & $2e^{-\frac{7\pi i}{9}}+2e^{\frac{5\pi i}{9}}$  & $e^{-\frac{2\pi i}{9}}+3e^{\frac{7\pi i}{9}}$  & $e^{-\frac{8\pi i}{9}}+3e^{\frac{\pi i}{9}}$  & $3e^{-\frac{5\pi i}{9}}+e^{\frac{4\pi i}{9}}$ \tabularnewline
\hline
13  & $2e^{-\frac{\pi i}{9}}+2e^{-\frac{7\pi i}{9}}$  & $e^{-\frac{8\pi i}{9}}+3e^{\frac{\pi i}{9}}$  & $3e^{-\frac{5\pi i}{9}}+e^{\frac{4\pi i}{9}}$  & $e^{-\frac{2\pi i}{9}}+3e^{\frac{7\pi i}{9}}$ \tabularnewline
\hline
14  & $e^{-\frac{8\pi i}{9}}+3e^{\frac{\pi i}{9}}$  & $2e^{-\frac{4\pi i}{9}}+2e^{\frac{2\pi i}{9}}$  & $2e^{\frac{2\pi i}{9}}+2e^{\frac{8\pi i}{9}}$  & $2e^{-\frac{4\pi i}{9}}+2e^{\frac{8\pi i}{9}}$ \tabularnewline
\hline
15  & $3e^{-\frac{5\pi i}{9}}+e^{\frac{4\pi i}{9}}$  & $2e^{\frac{2\pi i}{9}}+2e^{\frac{8\pi i}{9}}$  & $2e^{-\frac{4\pi i}{9}}+2e^{\frac{8\pi i}{9}}$  & $2e^{-\frac{4\pi i}{9}}+2e^{\frac{2\pi i}{9}}$ \tabularnewline
\hline
16  & $e^{-\frac{2\pi i}{9}}+3e^{\frac{7\pi i}{9}}$  & $2e^{-\frac{4\pi i}{9}}+2e^{\frac{8\pi i}{9}}$  & $2e^{-\frac{4\pi i}{9}}+2e^{\frac{2\pi i}{9}}$  & $2e^{\frac{2\pi i}{9}}+2e^{\frac{8\pi i}{9}}$ \tabularnewline
\hline
17  & $3e^{-\frac{8\pi i}{9}}+e^{\frac{\pi i}{9}}$  & $2e^{-\frac{4\pi i}{9}}+2e^{\frac{2\pi i}{9}}$  & $2e^{\frac{2\pi i}{9}}+2e^{\frac{8\pi i}{9}}$  & $2e^{-\frac{4\pi i}{9}}+2e^{\frac{8\pi i}{9}}$ \tabularnewline
\hline
18  & $e^{-\frac{5\pi i}{9}}+3e^{\frac{4\pi i}{9}}$  & $2e^{\frac{2\pi i}{9}}+2e^{\frac{8\pi i}{9}}$  & $2e^{-\frac{4\pi i}{9}}+2e^{\frac{8\pi i}{9}}$  & $2e^{-\frac{4\pi i}{9}}+2e^{\frac{2\pi i}{9}}$ \tabularnewline
\hline
19  & $3e^{-\frac{2\pi i}{9}}+e^{\frac{7\pi i}{9}}$  & $2e^{-\frac{4\pi i}{9}}+2e^{\frac{8\pi i}{9}}$  & $2e^{-\frac{4\pi i}{9}}+2e^{\frac{2\pi i}{9}}$  & $2e^{\frac{2\pi i}{9}}+2e^{\frac{8\pi i}{9}}$ \tabularnewline
\hline
\end{tabular}
\par\end{center}

\begin{center}
\begin{tabular}{|c|c|c|c|}
\hline
 & 17  & 18  & 19\tabularnewline
\hline
\hline
0  & $2$  & $2$  & $2$\tabularnewline
\hline
1  & $2e^{-\frac{2\pi i}{3}}$  & $2e^{-\frac{2\pi i}{3}}$  & $2e^{-\frac{2\pi i}{3}}$\tabularnewline
\hline
2  & $2e^{\frac{2\pi i}{3}}$  & $2e^{\frac{2\pi i}{3}}$  & $2e^{\frac{2\pi i}{3}}$\tabularnewline
\hline
3  & $-2$  & $-2$  & $-2$\tabularnewline
\hline
4  & $2e^{\frac{\pi i}{3}}$  & $2e^{\frac{\pi i}{3}}$  & $2e^{\frac{\pi i}{3}}$\tabularnewline
\hline
5  & $2e^{-\frac{\pi i}{3}}$  & $2e^{-\frac{\pi i}{3}}$  & $2e^{-\frac{\pi i}{3}}$\tabularnewline
\hline
6  & $0$  & $0$  & $0$\tabularnewline
\hline
7  & $0$  & $0$  & $0$\tabularnewline
\hline
8  & $3e^{-\frac{5\pi i}{9}}+e^{\frac{4\pi i}{9}}$  & $e^{-\frac{2\pi i}{9}}+3e^{\frac{7\pi i}{9}}$  & $e^{-\frac{8\pi i}{9}}+3e^{\frac{\pi i}{9}}$\tabularnewline
\hline
9  & $e^{-\frac{2\pi i}{9}}+3e^{\frac{7\pi i}{9}}$  & $e^{-\frac{8\pi i}{9}}+3e^{\frac{\pi i}{9}}$  & $3e^{-\frac{5\pi i}{9}}+e^{\frac{4\pi i}{9}}$\tabularnewline
\hline
10  & $e^{-\frac{8\pi i}{9}}+3e^{\frac{\pi i}{9}}$  & $3e^{-\frac{5\pi i}{9}}+e^{\frac{4\pi i}{9}}$  & $e^{-\frac{2\pi i}{9}}+3e^{\frac{7\pi i}{9}}$\tabularnewline
\hline
11  & $3e^{-\frac{5\pi i}{9}}+e^{\frac{4\pi i}{9}}$  & $e^{-\frac{2\pi i}{9}}+3e^{\frac{7\pi i}{9}}$  & $e^{-\frac{8\pi i}{9}}+3e^{\frac{\pi i}{9}}$\tabularnewline
\hline
12  & $3e^{-\frac{2\pi i}{9}}+e^{\frac{7\pi i}{9}}$  & $3e^{-\frac{8\pi i}{9}}+e^{\frac{\pi i}{9}}$  & $e^{-\frac{5\pi i}{9}}+3e^{\frac{4\pi i}{9}}$\tabularnewline
\hline
13  & $3e^{-\frac{8\pi i}{9}}+e^{\frac{\pi i}{9}}$  & $e^{-\frac{5\pi i}{9}}+3e^{\frac{4\pi i}{9}}$  & $3e^{-\frac{2\pi i}{9}}+e^{\frac{7\pi i}{9}}$\tabularnewline
\hline
14  & $2e^{-\frac{4\pi i}{9}}+2e^{\frac{2\pi i}{9}}$  & $2e^{\frac{2\pi i}{9}}+2e^{\frac{8\pi i}{9}}$  & $2e^{-\frac{4\pi i}{9}}+2e^{\frac{8\pi i}{9}}$ \tabularnewline
\hline
15  & $2e^{\frac{2\pi i}{9}}+2e^{\frac{8\pi i}{9}}$  & $2e^{-\frac{4\pi i}{9}}+2e^{\frac{8\pi i}{9}}$  & $2e^{-\frac{4\pi i}{9}}+2e^{\frac{2\pi i}{9}}$\tabularnewline
\hline
16  & $2e^{-\frac{4\pi i}{9}}+2e^{\frac{8\pi i}{9}}$  & $2e^{-\frac{4\pi i}{9}}+2e^{\frac{2\pi i}{9}}$  & $2e^{\frac{2\pi i}{9}}+2e^{\frac{8\pi i}{9}}$\tabularnewline
\hline
17  & $2e^{-\frac{7\pi i}{9}}+2e^{\frac{5\pi i}{9}}$  & $2e^{-\frac{\pi i}{9}}+2e^{-\frac{7\pi i}{9}}$  & $2e^{-\frac{\pi i}{9}}+2e^{\frac{5\pi i}{9}}$\tabularnewline
\hline
18  & $2e^{-\frac{\pi i}{9}}+2e^{-\frac{7\pi i}{9}}$  & $2e^{-\frac{\pi i}{9}}+2e^{\frac{5\pi i}{9}}$  & $2e^{-\frac{7\pi i}{9}}+2e^{\frac{5\pi i}{9}}$\tabularnewline
\hline
19  & $2e^{-\frac{\pi i}{9}}+2e^{\frac{5\pi i}{9}}$  & $2e^{-\frac{7\pi i}{9}}+2e^{\frac{5\pi i}{9}}$  & $2e^{-\frac{\pi i}{9}}+2e^{-\frac{7\pi i}{9}}$\tabularnewline
\hline
\end{tabular}
\par\end{center}


\begin{thebibliography}{DLM1}
\bibitem[A1]{A1} T. Abe, $C_{2}$-cofiniteness of the $2$-cycle
permutation orbifold models of minimal Virasoro vertex operator algebras.
\emph{Comm. Math. Phys.} \textbf{303} (2011), 825-844.

\bibitem[A2]{A2} T. Abe, $C_{2}$-cofiniteness of 2-cyclic permutation
orbifold models. \emph{Comm. Math. Phys.} \textbf{317} (2013), 425-445.

\bibitem[ABD]{ABD} T. Abe, G. Buhl and C. Dong, Rationality, regularity and
$C_2$-cofiniteness, {\em Trans. AMS.} {\bf 356} (2004), 3391-3402.

\bibitem[ADL]{ADL} T. Abe, C. Dong, H. Li, Fusion rules for the vertex
operator $M(1)^{+}$ and $V_{L}^{+}$. \emph{Comm. Math. Phys.} \textbf{253}
(2005) 171-219.


\bibitem[BK]{BK} B. Bakalov, A. Kirillov, Jr., Lectures on tensor categories
and modular functors, University Lecture Series, vol. 21, American
Mathematical Society, Providence, RI, 2001.

\bibitem[Ba]{Ba} P. Bantay, Characters and modular properties of
permutation orbifolds. \emph{Phys. Lett}. \textbf{B419} (1998) 175-178.

\bibitem[BDM]{BDM} K. Barron, C. Dong, G. Mason, Twisted sectors
for tensor product vertex operator algebras associated to permutation
groups. \emph{Commun. Math. Phys.} \textbf{227} (2002) 349-384.

\bibitem[BE]{BE}B.Bakalov, J. Elsinger, Orbifolds of lattice vertex
algebras under an isometry of order two. \emph{J. Algebra} \textbf{441
}(2015) 57-83.

\bibitem[Bo]{Bo} R. E. Borcherds, Vertex algebras, Kac-Moody algebras,
and the Monster. \textit{Proc. Natl. Acad. Sci. USA} \textbf{83} (1986),
3068-3071.

\bibitem[BHL]{BHL} K. Barron, Y.-Z.Huang, J. Lepowsky, An equivalence
of two constructions of permutation-twisted modules for lattice vertex
operator algebras. \emph{J. Pure Appl. Algebra} \textbf{210} (2007),
no. 3, 797\textendash 826.

\bibitem[BHS]{BHS} L. Borisov, M.B. Halpern, C. Schweigert, Systematic
approach to cyclic orbifolds.\emph{ Int. J. Mod. Phys.} \textbf{A13}
(1998), 125-168.

\bibitem[C]{C} H. Chen, Fusion rules among irreducible $V_{\sqrt{2}A_{2}}^{\tau}$-modules
of twisted type. \emph{Proc. of AMS.} \textbf{143} (2015),
3717-3726.

\bibitem[CL]{CL} H. Chen, C. Lam, Quantum dimensions and fusion rules
of the VOA $V_{L_{\mathcal{C}\times\mathcal{D}}}^{\tau}$, {\em J. Algebra} {\bf 459} (2016), 309-349

\bibitem[CM]{CM} S. Carnahan, M. Miyamoto, Regularity of fixed-point
vertex operator subalgebras. \emph{arXiv:1603.05645. }

\bibitem[D]{D} C. Dong, Vertex algebras associated with even lattices.
\emph{J. Algebra} \textbf{160} (1993), 245-265.



\bibitem[DJX]{DJX} C. Dong, X. Jiao, F. Xu, Quantum dimensions and
quantum Galois theory. \emph{Trans. Amer. Math. Soc.}\textbf{ 365}
(2013) 6441-6469.

\bibitem[DL]{DL}  C. Dong, J. Lepowsky, Generalized vertex algebras
and relative vertex operators. \emph{Progress in Math} Vol. \textbf{112},
Boston: Birkhauser, 1993.

\bibitem[DLM1]{DLM1} C. Dong, H. Li, G. Mason, Regularity of rational
vertex operator algebras. \emph{Adv. in Math.} \textbf{132} (1997),
148-166.

\bibitem[DLM2]{DLM2} C. Dong, H. Li, G. Mason, Twisted representations
of vertex operator algebras. \emph{Math. Ann.} \textbf{310} (1998),
571-600.

\bibitem[DLM3]{DLM3} C. Dong, H. Li, G. Mason, Modular-invariance
of trace functions in orbifold theory and generalized Moonshine. \emph{Comm.
Math. Phys.} \textbf{214}: 1 (2000), 1-56.

\bibitem[DLN]{DLN} C. Dong, X. Lin,  S.H. Ng, Congruence property in conformal field theory, {\em Algebra \& Number Theory}
{\bf 9} (2015), 2121-2166.

\bibitem[DM]{DM} C. Dong, G. Mason, On quantum Galois theory. \emph{Duke
Math. J.} \textbf{86} (1997), 305-321.

\bibitem[DMZ]{DMZ} C. Dong, G. Mason, Y. Zhu, Discrete series of
the Virasoro algebra and the moonshine module. \emph{Proc. Symp. Pure.
Math. Amer. Math. Soc. }\textbf{56 }(1994), 295-316.

\bibitem[DRX]{DRX} C. Dong, L. Ren, F. Xu, On orbifold Theory.\emph{
arXiv:1507.03306.}

\bibitem[DXY1]{DXY1} C. Dong, F. Xu, N. Yu, 2-cyclic permutations
of lattice vertex operator algebras. \emph{Proc. of AMS. }\textbf{144
}(2016), 3207-3220.

\bibitem[DXY2]{DXY2} C. Dong, F. Xu, N. Yu, 2-permutations of lattice
vertex operator algebras: Higher rank. \emph{arXiv:1602.05602}.

\bibitem[DY]{DY} C. Dong, G. Yamskulna, Vertex operator algebras,
generalized doubles and dual pairs. \emph{Math. Z.} \textbf{241} (2002), 397-423.

\bibitem[FHL]{FHL} I. Frenkel, Y.-Z.Huang, J. Lepowsky, On axiomatic
approaches to vertex operator algebras and modules. \emph{Mem. Amer.
Math. Soc.} \textbf{104} (1993).

\bibitem[FLM]{FLM} I. B. Frenkel, J. Lepowsky, A. Meurman, Vertex
operator algebras and the monster. \emph{Pure and Applied Math.} vol.
\textbf{134}, Academic Press, Massachusetts, 1988.

\bibitem[H1]{H1} Y. Huang, \emph{Vertex operator algebras and Verlinde conjecture}, {Comm. Contemp. Math} {\bf
    10} (2008), 103-154.
\bibitem[H2]{H2} Y. Huang, \emph{Rigidity and modularity of vertex operator algebras}, {Comm. Contemp. Math},
    {\bf 10} (2008), 871-911.
    

\bibitem[HKL]{HKL} Y. Huang, A. Kirillov Jr., J. Lepowsky, Braided tensor categories
and extensions of vertex operator algebras, {\em Comm. Math. Phys.}  \textbf{337}
(2015), 1143-1159.

\bibitem[KLX]{KLX} Kac, V., Longo, R., Xu, F., Solitions in affine
and permutation orbifolds. \emph{Commun. Math. Phys.}\textbf{ 253}
(2005), 723-764.

\bibitem[Le]{Le}J. Lepowsky, Calculus of twisted vertex operators.
\emph{Proc. Natl. Acad. Sci.} USA \textbf{82} (1985), 8295\textendash 8299.

\bibitem[Li]{Li} H. Li, Some finiteness properties of regular vertex
operator algebras. \emph{J. Algebra}\textbf{ 212} (1999), 495-514.

\bibitem[LL]{LL} J. Lepowsky, H. Li, Introduction to vertex operator
algebras and their representations. \emph{ Progress in Math.} Vol.
\textbf{227} Boston: Birkhauser, 2004.

\bibitem[M1]{M1} M. Miyamoto, A $\mathbb{Z}_{3}$-orbifold theory
of lattice vertex operator algebra and $\mathbb{Z}_{3}$-orbifold
constructions. Symmetries, integrable systems and representations,
319-344, \emph{Springer Proc. Math. Stat.}, \textbf{40}, Springer,
Heidelberg, 2013.

\bibitem[M2]{M2}M. Miyamoto, $C_{2}$-cofiniteness of cyclic-orbifold
models. \emph{Comm. Math. Phys.} \textbf{335} (2015),  1279\textendash 1286.




\bibitem[TY1]{TY1} K. Tanabe, H. Yamada, The fixed point subalgebra
of a lattice vertex operator algebra by an automorphism of order three.
\emph{Pacific J. Math.} \textbf{230} (2007),  469-510.

\bibitem[TY2]{TY2} K. Tanabe, H. Yamada, Fixed point subalgebras
of lattice vertex operator algebras by an automorphism of order three.
\emph{J. Math. Soc. Japan} \textbf{65} (2013), 1169-1242.

\bibitem[X]{X} X. Xu, Introduction to vertex operator superalgebras
and their modules. \emph{Mathematics and its Applications.} \textbf{Vol.
456}, Kluwer Academic Publishers, Dordrecht, 1998.

\bibitem[Y]{Y} H. Yamauchi, Module categories of simple current extensions
of vertex operator algebras. \emph{J. Pure Appl. Algebra} \textbf{189}(1-3)
(2004), 315-358.

\bibitem[Z]{Z}Y. Zhu, Modular invariance of characters of vertex
operator algebras. \emph{J. Amer, Math. Soc.} \textbf{9} (1996), 237-302.\end{thebibliography}
\end{document}